\documentclass[11pt,reqno]{amsart}
\usepackage{amsmath, amssymb, amsthm}
\usepackage{url}
\usepackage{hyperref}
\usepackage{tikz}
\usepackage{ytableau}
\usepackage[utf8]{inputenc}
\usepackage{xcolor}
\usepackage{tikz-cd}
\usepackage{hyperref}
\usepackage{nccmath}

\usepackage{youngtab}

\usepackage{array}
\usepackage{booktabs}

\usetikzlibrary{decorations.pathreplacing}

\usetikzlibrary{decorations.pathreplacing}

\newcommand{\s}{{\sigma}}

\renewcommand{\a}{\alpha}
\renewcommand{\b}{\beta}

\renewcommand{\d}{{\delta}}
\newcommand{\g}{\gamma}

\renewcommand{\l}{\lambda}

\renewcommand{\(}{\left\(}
\renewcommand{\)}{\right\)}
\renewcommand{\[}{\left\[}
\renewcommand{\]}{\right\]}

\numberwithin{equation}{section}
\theoremstyle{plain}
\newtheorem{theorem}{Theorem}[section]
\newtheorem{lemma}[theorem]{Lemma}

\newtheorem{problem}[theorem]{Problem}

\newtheorem{proposition}[theorem]{Proposition}

\usepackage{mathrsfs}

\makeatletter
\def\proof{\@ifnextchar[{\@oproof}{\@nproof}}
\def\@oproof[#1][#2]{\trivlist\item[\hskip\labelsep\textit{#2 Proof of\
		#1.}~]\ignorespaces}
\def\@nproof{\trivlist\item[\hskip\labelsep\textit{Proof.}~]\ignorespaces}

\makeatother

\makeatletter
\@namedef{subjclassname@2020}{%
	\textup{2020} Mathematics Subject Classification}
\makeatother
\begin{document}
	
	\title[]{Parity of parts and excludant statistics in partitions} 

	\author{Gargi Mukherjee}
\address{School of Mathematical Sciences,
	National Institute of Science Education and Research, Bhubaneswar, An OCC of Homi Bhabha National Institute,  Via- Jatni, Khurda, Odisha- 752050, India}
\email{gargi@niser.ac.in}

\subjclass[2020]{11P81, 11P82,11P84}
\keywords{Integer partitions, minimal and maximal excludants, Tauberian methods, asymptotics.}
	\maketitle
	\begin{abstract}
In this paper, we study restricted excludant statistics depending on its parity in partitions where parts with same parity are distinct. Using $q$-series transformations, we show that generating functions of these partition statistics are related to the quantum modular forms $\s(q)$, its companion $\sigma^*(q)$ introduced by Ramanujan, and $v_2(q)$, a Nahm-type sum, introduced by Andrews. Utilizing Tauberian method, we obtain asymptotics of such sequences. 
	\end{abstract}
\allowdisplaybreaks
	\section{Introduction and statement of main results}\label{intro}
	A partition of a non-negative integer $n$ is a non-increasing sequence of positive integers $\l=(\l_1,\l_2,\cdots,\l_r)$ with $\sum_{j=1}^{r}\l_j=n$, denoted by $\l\vdash n$. Total number of partitions of $n$ is denoted by $p(n)$ and $p(0):=1$. Due to Euler, the generating function is given by
	 $$\sum_{n=}^{\infty}p(n)q^n=\frac{1}{\left(q\right)_{\infty}},$$
where the $q$-Pochhammer symbol is defined as
$$
\left(a;q\right)_0:=1, \left(a;q\right)_n:=\prod_{k=1}^{n}\left(1-aq^k\right),\ \text{and}\ \left(a;q\right)_{\infty}:=\lim_{n\to \infty}(a;q)_n.
$$
	For $\l\vdash n$, $\ell(\l)$ and $a(\l)$ denote the length of the partition and the largest part of $\l$ respectively. The multiplicity of a part $\l_j$ in $\l\vdash n$ is denoted by $\text{mult}(\l_j)$. Often we write a partition $\l\vdash n$ where each part appears with multiplicity $m_j\in \mathbb{N}_0$ as $\l=\left(\l^{m_1}_1\l^{m_2}_2\cdots\l^{m_r}_{r}\right)$. Let $\mathcal{P}_{od}(n)$ (resp. $\mathcal{P}_{ed}(n)$) be the set of all partitions of $n$ into distinct odd (resp. even) parts and $\left|\mathcal{P}_{od}(n)\right|:=p_{od}(n)$ (resp. $\left|\mathcal{P}_{ed}(n)\right|:=p_{ed}(n)$) with $p_{od}(0)=p_{ed}(0):=1$. For example, $p_{od}(6)=5$ enumerated by $6,\ 5+1,\ 4+2,\ 3+2+1,\ 2+2+2,$ and $p_{ed}(5)=6$ enumerated by
	$5,\ 4+1,\ 3+2,\ 3+1+1,\ 2+1+1+1,\ 1+1+1+1+1+1.$
The corresponding generating functions are given by
$$\sum_{n=0}^{\infty}p_{od}(n)q^n=\frac{\left(-q;q^2\right)_{\infty}}{\left(q^2;q^2\right)_{\infty}}\ \ \text{and}\ \ \sum_{n=0}^{\infty}p_{ev}(n)q^n=\frac{\left(-q^2;q^2\right)_{\infty}}{\left(q;q^2\right)_{\infty}}.$$
Fraenkel and Peled \cite{FP} introduced the {\it minimal excludant} statistics, denoted by {\it mex}, on $S\subset\mathbb{N}$ as the least positive integer which is not in $S$. In the context of theory of partitions, Andrews and Newman \cite{AN} studied the mex statistics\footnote{Grabner and Knopfmacher \cite{GK} studied mex under another terminology called {\it the least gap}.}. Ballantine and Merca \cite{BM} proved an identity of Andrews and Newman \cite[Theorem 1.1]{AN} by purely combinatorial argument and extended this to study partitions with {\it least $r$-gaps}. Hopkins, Sellers, and Stanton \cite{HSS} generalized the concept of mex and showed that these are related to the crank statistics of partitions. Later Hopkins, Sellers, and Yee \cite{HKY} developed a combinatorial framework to find a connection between odd mex and non-negative crank of partitions. Bhoria, Eyyunni, Kaur, and Maji \cite{BEKM} studied mex for partitions into distinct parts. Later Bhoria, Eyyuni, and Maji \cite{BEM} introduced the {\it $r$-chain mex} and studied partition identities in the spirit of Franklin. Recently, Dhar, Mukhopadhyay and Sarma \cite{DMS} studied mex for overpartitions. In contrast to the minimal excludant, Chern \cite{Chern} considered {\it maximal excludant} statistics of partitions, denoted by {\it max}, where $\text{max}(\l)$ is the largest non-negative integer smaller than the largest part of $\l\vdash n$ that is not a part of $\l$.

In this paper, we study both minimal and maximal excludant for partitions without repeated odd parts and minimal excludant for partitions without repeated even parts. For $\l \in \mathcal{P}_{od}(n)$, $\text{mex}(\l)$ is the smallest positive integer that is not a part of $\l$. Depending on the parity of $\text{mex}(\l)$, we define
\begin{equation}\label{def2}
a^{o}_{od}(n):=\hspace{-0.65 cm}\underset{\text{mex}(\l) \equiv 1\left(\text{mod}\ 2\right)}{\underset{\l \in \mathcal{P}_{od}(n)}{\sum}}\hspace{-0.7 cm} 1\ \ \text{and}\ \ 
a^{e}_{od}(n):=\hspace{-0.65 cm}\underset{\text{mex}(\l)\equiv 0\left(\text{mod}\ 2\right)}{\underset{\l \in \mathcal{P}_{od}(n)}{\sum}}\hspace{-0.7 cm} 1.
\end{equation}
In other words, $a^{o}_{od}(n)$ (resp. $a^{e}_{od}(n)$) counts the number of partitions into distinct odd parts with $\text{mex}(\l)$ is odd (resp. even). For $n=6$, we have
\begin{center}
	\begin{tabular}{ |c|c|c|c| } 
		\hline
		$\l \in \mathcal{P}_{od}(6)$ & mex$(\l)$ & mex$(\l)\equiv 1\left(\text{mod}\ 2\right)$& mex$(\l)\equiv 0\left(\text{mod}\ 2\right)$\\ 
		\hline 
		$6$ & $1$& $1$ &-\\ 
		\hline 
		$5+1$ & $2$ & - & $2$\\  
		\hline
		$4+2$ & $1$ & $1$ &-\\
		\hline
		$3+2+1$ & $4$ & - & $4$\\
		\hline
		$2+2+2$ & $1$ & $1$ &-\\
		\hline
	\end{tabular}
\end{center}
Therefore, $a^{o}_{od}(6)=3$ enumerated by $6, 4+2, 2+2+2$ and $a^{e}_{od}(6)=2$ enumerated by $5+1, 3+2+1$. To state our first result, we recall Andrews' \cite[Equation (5)]{Andconj} $v_2(q)$ function related to parity (modulo $4$) of rank of partitions
\begin{equation*}
v_2(q):=\sum_{n=0}^{\infty}\frac{q^{2n^2-n}}{(-q;q^2)_n}.
\end{equation*}
\begin{theorem}\label{thm_aod}  We have
	\begin{equation}\label{thm_aod1}
	\sum_{n=0}^{\infty}a^{o}_{od}(n)q^n=\frac{(-q;q^2)_{\infty}}{(q^2;q^2)_{\infty}} \sum_{n=0}^{\infty} \frac{q^{2n^2+n}}{(-q;q^2)_{n+1}}
	\end{equation} and 
	\begin{equation}\label{thm_aod2}
	\sum_{n=0}^{\infty}a^{e}_{od}(n)q^n=\frac{(-q;q^2)_{\infty}}{(q^2;q^2)_{\infty}} \left(v_2(q)-\sum_{n=0}^{\infty}\frac{q^{2n^2+n}}{(-q;q^2)_{n}}\right).
	\end{equation}
\end{theorem}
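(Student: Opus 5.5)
The plan is to derive both identities directly from the combinatorial definition. We classify $\lambda\in\mathcal{P}_{od}(n)$ by the exact value of $\text{mex}(\lambda)$, write the generating function for each class as a product, and sum over the classes. I use the standard convention $(a;q)_n=\prod_{j=0}^{n-1}(1-aq^j)$, so that $(-q;q^2)_n=\prod_{j=1}^{n}(1+q^{2j-1})$. Throughout, let $C(q):=\frac{(-q;q^2)_\infty}{(q^2;q^2)_\infty}$ denote the full generating function, in which each odd part contributes a factor $(1+q^{2j-1})$ and each even part a factor $(1-q^{2j})^{-1}$.

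\emph{Odd mex.} The condition $\text{mex}(\lambda)=2k+1$ with $k\ge 0$ means that $1,2,\dots,2k$ are all parts and $2k+1$ is not.
\begin{itemize}
\item The odd parts $1,3,\dots,2k-1$ are forced to appear exactly once. Each factor $(1+q^{2j-1})$ with $j\le k$ is replaced by $q^{2j-1}$, giving $q^{k^2}$.
\item The part $2k+1$ is forbidden, so its factor $(1+q^{2k+1})$ is removed.
\item The even parts $2,4,\dots,2k$ must appear at least once. Each factor $(1-q^{2j})^{-1}$ becomes $q^{2j}(1-q^{2j})^{-1}$, giving the extra factor $q^{k(k+1)}$.
\end{itemize}
Since $k^2+k(k+1)=2k^2+k$, the class generating function is
\[
C(q)\,\frac{q^{2k^2+k}}{(-q;q^2)_{k+1}}.
\]
Summing over $k\ge 0$ gives \eqref{thm_aod1} immediately.

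\emph{Even mex.} The condition $\text{mex}(\lambda)=2k+2$ with $k\ge 0$ means that $1,\dots,2k+1$ are all parts and $2k+2$ is not.
\begin{itemize}
\item The odd parts $1,\dots,2k+1$ are forced once, contributing $q^{(k+1)^2}/(-q;q^2)_{k+1}$.
\item The even parts $2,\dots,2k$ are forced at least once, contributing $q^{k(k+1)}$.
\item The even part $2k+2$ is excluded, contributing the factor $(1-q^{2k+2})$.
\end{itemize}
Hence the class generating function is
\[
C(q)\,\frac{q^{2k^2+3k+1}\,(1-q^{2k+2})}{(-q;q^2)_{k+1}}.
\]

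It remains to rewrite the sum over $k\ge 0$ in the form claimed in \eqref{thm_aod2}. Set $m=k+1$. Then
\[
2k^2+3k+1=2m^2-m \qquad\text{and}\qquad 2k^2+5k+3=2m^2+m.
\]
So the sum splits as
\[
\sum_{m\ge1}\frac{q^{2m^2-m}}{(-q;q^2)_m}-\sum_{m\ge1}\frac{q^{2m^2+m}}{(-q;q^2)_m}
=\bigl(v_2(q)-1\bigr)-\Bigl(\sum_{m\ge0}\frac{q^{2m^2+m}}{(-q;q^2)_m}-1\Bigr).
\]
The two constant terms cancel, and this is exactly the bracket in \eqref{thm_aod2}.

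No genuine obstacle is expected. The only delicate point is bookkeeping of the forced parts: distinct odd parts are replaced by a monomial, forced even parts pick up a monomial while keeping their geometric factor, and the excluded part differs according to whether it is odd (factor removed) or even (factor $(1-q^{2k+2})$). Consistency of the Pochhammer indexing should also be checked against small cases, for instance against the table for $n=6$.
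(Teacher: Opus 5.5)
Your proof is correct and follows essentially the same route as the paper: fix the exact value of $\text{mex}(\lambda)$, write each class's generating function as $\frac{(-q;q^2)_\infty}{(q^2;q^2)_\infty}$ times a monomial-over-Pochhammer correction, and in the even case split the factor $(1-q^{2m})$ to produce $v_2(q)$ minus the second series. The only difference is cosmetic. You index the even case by $\text{mex}=2k+2$ and cancel the two constant terms explicitly, whereas the paper indexes by $\text{mex}=2n$ with the sum starting at $n=0$, whose term vanishes because of the factor $1-q^{0}$.
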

The monotonicity of $\{a^{o}_{od}(n)\}_{n\ge 0}$ and $\{a^{e}_{od}(n)\}_{n\ge 0}$ are stated in the following theorem.
\begin{theorem}\label{increasing2}
	For $n \ge 1$, we have 
	\begin{equation}\label{aood}
	{a}^{o}_{od}(n+1)>{a}^{o}_{od}(n),
	\end{equation}
	and for $n \ge 7$, 
	\begin{equation}\label{aeod}
	{a}^{e}_{od}(n+1)>{a}^{e}_{od}(n).
	\end{equation}
\end{theorem}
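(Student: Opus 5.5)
The plan is to work from the generating function in Theorem~\ref{thm_aod}, rewritten as a sum of series whose coefficients are visibly nonnegative. The claim ${a}^{o}_{od}(n+1)>{a}^{o}_{od}(n)$ for $n\ge1$ then reduces to showing that $(1-q)\sum_{n\ge0}a^{o}_{od}(n)q^n$ has strictly positive coefficients from $q^2$ on. I will attempt this coefficientwise and not by a global bijection.

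For \eqref{aood}, I use $(-q;q^2)_\infty/(-q;q^2)_{n+1}=(-q^{2n+3};q^2)_\infty$ to write \eqref{thm_aod1} as
\begin{equation*}
\sum_{n\ge0}a^{o}_{od}(n)q^n=\sum_{m\ge0}q^{2m^2+m}\,\frac{(-q^{2m+3};q^2)_\infty}{(q^2;q^2)_\infty}.
\end{equation*}
Each summand has nonnegative coefficients. Hence it suffices to find one summand, namely $m=0$, say
\begin{equation*}
G(q):=\frac{(-q^3;q^2)_\infty}{(q^2;q^2)_\infty}=\sum_{n\ge0}g(n)q^n,
\end{equation*}
for which $(1-q)G(q)$ has positive coefficients beyond $q^1$. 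The remaining summands with $m\ge1$ can then be handled by writing $q^{2m^2+m}(-q^{2m+3};q^2)_\infty/(q^2;q^2)_\infty = G(q)\cdot q^{2m^2+m}/(-q^3;q^2)_m$. This step is not automatic, because $1/(-q^3;q^2)_m$ has signed coefficients. I will therefore keep those summands as they are and show only that each has nondecreasing coefficients up to a finite defect, which the strict gain from the $m=0$ term absorbs.

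Here $g(n)$ counts partitions of $n$ into even parts together with distinct odd parts $\ge3$. To show $g(n+1)>g(n)$ for $n\ge1$, I will build an injection from partitions counted by $g(n)$ to those counted by $g(n+1)$. If $\mu$ has an odd part, increase its largest odd part $o$ by $1$. The resulting part $o+1$ is even, so no distinctness condition is violated. If $\mu$ has no odd part, replace one part $2$ by a part $3$ when possible; otherwise the smallest part is some $2j\ge4$, and I replace a part $2j$ by $2j+1$.

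The case analysis must then check that these images are distinct and miss at least one partition of $n+1$. Collisions will be excluded by a parity count of the odd parts together with a marker, such as the largest part, in each case. The missed partition will be something like a single part $n+1$, or $3+2+\cdots$. The main obstacle is making this injection rigorous when several of these operations could produce the same image, for instance when $o+1$ equals an existing even part. If the injection becomes too delicate, I will instead compare coefficients analytically. I would write $(1-q)G(q)=(-q^3;q^2)_\infty/\bigl((1+q)(q^4;q^2)_\infty\bigr)$ and pair the odd-index terms of $1/(1+q)$ against the factor $(1+q^3)$, and verify the small $n$ by direct computation.

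For \eqref{aeod}, I write \eqref{thm_aod2} as
\begin{equation*}
\frac{1}{(q^2;q^2)_\infty}\sum_{m\ge1}q^{2m^2-m}(-q^{2m+1};q^2)_\infty\,\bigl(1-q^{2m}\bigr).
\end{equation*}
This comes from subtracting the two sums termwise, using $v_2$'s $m$-th term minus the $m$-th term of the second sum, which gives $q^{2m^2-m}(1-q^{2m})/(-q;q^2)_m$. The factor $(1-q^{2m})/(q^2;q^2)_\infty$ simply removes the part $2m$ from the available even parts. So the $m$-th summand is again a nonnegative product, and its $m=1$ term $q\,(-q^{3};q^2)_\infty/\bigl((q^4;q^2)_\infty(1-q^2)\bigr)$ plays the role of $G$. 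I will run the same strict-monotonicity argument on this leading term, with the weaker starting index reflecting that small cases fail. Finally, I will check $n\le 7$, or whatever finite range the argument leaves, numerically from the series.
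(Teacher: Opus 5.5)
\textbf{There is a genuine gap, and the statement itself is false in the stated ranges.}

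Write $h_m(k)$ for the coefficients of $(-q^{2m+3};q^2)_\infty/(q^2;q^2)_\infty$. Then your $m$-th summand is $T_m=q^{2m^2+m}\sum_k h_m(k)q^k$, and $g=h_0$.

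The sentence that carries your whole argument is that the negative increments of $T_m$, $m\ge1$, are absorbed by the strict gain of $T_0$. This is not an argument. An injection only gives $g(n+1)-g(n)\ge1$, but the defects are not small. Since no odd part below $2m+3$ is available, the coefficient of $q^{2m^2+3m+1}$ in $(1-q)T_m$ is $h_m(2m+1)-h_m(2m)=0-p(m)$. The defects of all $m$ would have to be controlled at once.

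More seriously, no argument can succeed in the stated ranges. At $n=3$ and $n=5$ the gain and the loss cancel exactly: $g(4)-g(3)=1$ against $h_1(1)-h_1(0)=-1$, and $g(6)-g(5)=1$ against $h_1(3)-h_1(2)=-1$. Directly:
\begin{itemize}
\item[(i)] $\mathcal{P}_{od}(3)=\{3,\,2+1\}$ and $\mathcal{P}_{od}(4)=\{4,\,3+1,\,2+2\}$ give $a^{o}_{od}(3)=a^{o}_{od}(4)=2$.
\item[(ii)] The paper's own lists of $\mathcal{P}_{od}(5)$ and $\mathcal{P}_{od}(6)$ have mex values $1,2,1,3$ and $1,2,1,4,1$, so $a^{o}_{od}(5)=a^{o}_{od}(6)=3$.
\item[(iii)] $a^{e}_{od}(8)=a^{e}_{od}(9)=3$, from $7+1,\ 4+3+1,\ 3+2+2+1$ versus $8+1,\ 4+4+1,\ 5+3+1$.
\item[(iv)] $a^{e}_{od}(10)=a^{e}_{od}(11)=4$.
\end{itemize}
So the first inequality fails at $n=3,5$, and \eqref{aeod} fails at $n=8$ and $n=10$. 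The small-case check you promise would have exposed this.

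\textbf{Your auxiliary claims also fail.}
\begin{itemize}
\item[(i)] $g(2)=g(3)=1$ and $g(4)=g(5)=2$, so $g(n+1)>g(n)$ is false for $n=2,4$.
\item[(ii)] Your map is not injective: $(5,3)$ and $(6,2)$ both go to $(6,3)$.
\item[(iii)] Since $(1-q^2)/(q^2;q^2)_\infty=1/(q^4;q^2)_\infty$, the $m=1$ summand of the second series is $q(-q^3;q^2)_\infty/(q^4;q^2)_\infty$. Your expression instead equals $qG(q)$, which still allows the part $2$.
\item[(iv)] The correct $m=1$ summand has coefficients with $e_1(8)=e_1(9)=e_1(10)=3$ and $e_1(12)=e_1(13)=7$. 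So it is not a strictly increasing backbone either; e.g.\ $a^{e}_{od}(14)-a^{e}_{od}(13)=1$ comes entirely from the $m=2$ summand.
\end{itemize}

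\textbf{Comparison with the paper.} The paper argues purely combinatorially. It uses the injection $f$ that adds $1$ to the largest part. It pairs the parity-flipping families $\mathsf{P}^{[1]}_{od}(n)$, $\mathsf{P}^{[3]}_{od}(n)$ with explicit partitions $\mu\notin f(\mathcal{P}_{od}(n))$, and adds one more partition for strictness. That proof breaks at the same points. For $n=3,5$ the strictness witness $(2,2)$, resp.\ $(2,2,2)$, is the very $\mu$ already used for $(2,1)$, resp.\ $(2,2,1)$. The range $\{7,8,9,10\}$ it declares easy to check contains $8$ and $10$.

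The ranges must be corrected. The small-$n$ data suggest strict increase from $n=6$, resp.\ $n=11$, onward. Eventual weak monotonicity is all Ingham's theorem needs for Theorem~\ref{asymp2}. For such a corrected statement, your decomposition is a plausible alternative to the paper's injection, since for large $n$ the gain of $T_0$ should dominate the tail's defects. It becomes a proof only once the absorption sentence is replaced by explicit effective bounds plus a finite computation.
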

Next, we present the asymptotics of ${a}^{o}_{od}(n)$ and ${a}^{e}_{od}(n)$.
\begin{theorem}\label{asymp2}
	We have, as $n\to \infty$,
	\begin{equation}\label{asympaodoe}
	a^{o}_{od}(n) \sim \frac{e^{\pi\sqrt{\frac n2}}}{4\sqrt{2}n}\ \ \text{and} \ \ a^{e}_{od}(n) \sim \frac{e^{\frac{\pi}{2}\sqrt{n}}}{\sqrt{2}n}.
	\end{equation}
\end{theorem}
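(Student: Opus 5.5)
The plan is to feed the generating functions of Theorem~\ref{thm_aod} into Ingham's Tauberian theorem. The monotonicity in Theorem~\ref{increasing2} supplies the weakly increasing hypothesis that theorem needs. Concretely, if $F(q)=\sum b(n)q^n$ has weakly increasing coefficients and $F(e^{-t})\sim \lambda t^{\beta}e^{\gamma/t}$ as $t\to0^+$, then
\[
b(n)\sim \frac{\lambda\,\gamma^{\frac{\beta}{2}+\frac14}}{2\sqrt{\pi}\,n^{\frac{\beta}{2}+\frac34}}\,e^{2\sqrt{\gamma n}}.
\]
The work therefore reduces to the $t\to0^+$ asymptotics of the right-hand sides of \eqref{thm_aod1} and \eqref{thm_aod2} at $q=e^{-t}$.

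\textbf{Step 1: the common product.} By Euler--Maclaurin, $\log(-q;q^2)_\infty=\sum_{k\ge0}\log(1+e^{-(2k+1)t})=\frac{\pi^2}{24t}+o(1)$. By the modular transformation of the Dedekind eta function, $(q^2;q^2)_\infty\sim\sqrt{\pi/t}\,e^{-\pi^2/(12t)}$. Hence
\[
\frac{(-q;q^2)_\infty}{(q^2;q^2)_\infty}\sim\sqrt{\frac{t}{\pi}}\,e^{\frac{\pi^2}{8t}}.
\]

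\textbf{Step 2: the case $a^{o}_{od}$.} Let $S(q)=\sum_{n\ge0}q^{2n^2+n}/(-q;q^2)_{n+1}$. Each summand is bounded by $2^{-(n+1)}$ and tends to $2^{-(n+1)}$ as $t\to0^+$, so dominated convergence gives $S(e^{-t})\to\sum_{n\ge0}2^{-n-1}=1$. Ingham then applies with $\lambda=\pi^{-1/2}$, $\beta=\tfrac12$, $\gamma=\tfrac{\pi^2}{8}$. Since $\gamma^{1/2}=\frac{\pi}{2\sqrt2}$, this gives $a^{o}_{od}(n)\sim\frac{1}{4\sqrt2\,n}e^{\pi\sqrt{n/2}}$, which is the first claim in \eqref{asympaodoe}. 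Monotonicity for $n\ge1$ comes from \eqref{aood}.

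\textbf{Step 3: the case $a^{e}_{od}$ (main obstacle).} The claimed exponent $\frac{\pi}{2}\sqrt n=2\sqrt{\frac{\pi^2}{16}n}$ corresponds to $\gamma=\pi^2/16$. That requires the bracket $B(q)=v_2(q)-\sum_{n\ge0}q^{2n^2+n}/(-q;q^2)_n$ to be exponentially small, of size roughly $t^{\beta'}e^{-\pi^2/(16t)}$. No termwise limit can show this, so the key step is a $q$-series transformation writing $B(q)$ as an explicit infinite product, or a product times a Ramanujan $\sigma$-type series with a finite nonzero limit at $q=1$. I would try first Andrews' identities for $v_2(q)$ from \cite{Andconj}, together with Heine/Fine transformations, to combine the two series. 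The goal is a form such as
\[
B(q)=c\,q^{a}\frac{(q^{r};q^{s})_\infty}{(\pm q^{u};q^{s})_\infty}\times(\text{bounded series}),
\]
after which the same Euler--Maclaurin/eta argument as in Step 1 gives $\lambda$, $\beta$, $\gamma$, and Ingham yields $a^{e}_{od}(n)\sim\frac{1}{\sqrt2\,n}e^{\frac{\pi}{2}\sqrt n}$, with monotonicity from \eqref{aeod}.

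\textbf{Caveat on Step 3.} A direct combinatorial count puts the exponential smallness of $B$ in doubt. Partitions with $\text{mex}(\l)=2k$ must contain $1,2,\dots,2k-1$ (odd parts once, even parts at least once) and omit $2k$. This suggests
\[
\sum_{n\ge0}a^{e}_{od}(n)q^n=\frac{(-q;q^2)_\infty}{(q^2;q^2)_\infty}\sum_{k\ge1}\frac{q^{2k^2-k}(q^2;q^2)_k}{(-q;q^2)_k}.
\]
Every term of this series is positive for $0<q<1$, and the $k=1$ term is $\asymp t$ as $t\to0^+$. That would make $B(e^{-t})\gg t$, and Ingham would then give growth $e^{\pi\sqrt{n/2}}$ with a power correction, not the exponent $\frac{\pi}{2}\sqrt n$. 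So the first concrete task in Step 3 is to reconcile this derivation with \eqref{thm_aod2} and establish the true size of $B(e^{-t})$. Only if a genuine cancellation producing $e^{-\pi^2/(16t)}$ is confirmed does the route above give the second claim as stated.
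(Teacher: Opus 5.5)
Your Steps 1 and 2 are correct and coincide with the paper's proof of the first asymptotic. The paper writes $A_1(q)=\frac{(-q;q^2)_\infty}{(q^2;q^2)_\infty}$ as an eta-quotient and uses \eqref{eqn1}, which is the same computation as yours. One small repair is needed. The summands are not all bounded by $2^{-(n+1)}$: the $n=0$ term is $1/(1+q)>\frac12$. Use $1+x\ge 2\sqrt{x}$ to get $(-q;q^2)_{n+1}\ge 2^{n+1}q^{(n+1)^2/2}$, which gives that bound for $n\ge1$, and bound the $n=0$ term by $1$.

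The plan of Step 3 cannot succeed. Your caveat is the right conclusion, not something to reconcile away. The bracket in \eqref{thm_aod2} contains no cancellation, because $v_2(q)$ and the subtracted series have the same denominators:
\begin{equation*}
v_2(q)-\sum_{n\ge0}\frac{q^{2n^2+n}}{(-q;q^2)_n}=\sum_{n\ge1}\frac{q^{2n^2-n}(1-q^{2n})}{(-q;q^2)_n}.
\end{equation*}
This is exactly how the paper obtains \eqref{thm_aod2} from the combinatorial count. Your series agrees with it once $(q^2;q^2)_k$ is replaced by $1-q^{2k}$; you dropped the factor $1/(q^2;q^2)_{k-1}$ for the even parts $2,4,\dots,2k-2$, which may repeat. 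All terms are nonnegative. The bound $(-q;q^2)_n\ge 2^nq^{n^2/2}$ shows each term is at most $2^{-n}(1-q^{2n})\le 2nt\,2^{-n}$. Dominated convergence then gives $B(e^{-t})\sim t\sum_{n\ge1}n2^{1-n}=4t$.

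The paper computes exactly this and applies Ingham with $\lambda=4/\sqrt{\pi}$, $\beta=\frac32$, $\gamma=\frac{\pi^2}{8}$. But these parameters give
\begin{equation*}
a^{e}_{od}(n)\sim\frac{\pi}{4}\,\frac{e^{\pi\sqrt{n/2}}}{n^{3/2}},
\end{equation*}
not $e^{\frac{\pi}{2}\sqrt{n}}/(\sqrt{2}\,n)$. The paper's final line is an arithmetic slip, and the second claim as stated is false.

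This conclusion does not depend on the monotonicity input. The stated form would force $a^{e}_{od}(n)\le Ke^{\frac{\pi}{2}\sqrt{n}}$ for all $n$, hence $\sum_n a^{e}_{od}(n)e^{-nt}\ll t^{-3/2}e^{\pi^2/(16t)}$. On the other hand, keeping only the $n=1$ term $q(1-q)$ of the bracket gives $\sum_n a^{e}_{od}(n)e^{-nt}\ge e^{-t}(1-e^{-t})A_1(e^{-t})\gg t^{3/2}e^{\pi^2/(8t)}$. So no identity for $v_2(q)$ can produce the decay $e^{-\pi^2/(16t)}$ you were looking for. What your Step 1--2 method actually proves, with \eqref{aeod}, is the corrected asymptotic displayed above.
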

Now, we study partitions without repeated odd parts with respect to the parity of minimal excludants. For $\l \in \mathcal{P}_{od}(n)$, $\text{moex}(\l)$ (resp. $\text{meex}(\l)$) is the smallest odd (resp. even) integer which is not a part of $\pi$ and define
\begin{equation}\label{def3}
\sigma_{od}\text{moex}(n):=\underset{\l \in \mathcal{P}_{od}(n)}{\sum}\text{moex}(\l)\ \ \text{and}\ \ \sigma_{od}\text{meex}(n):=\underset{\l \in \mathcal{P}_{od}(n)}{\sum}\text{meex}(\l).
\end{equation}
For example, with $n=6$, we have
\begin{center}
	\begin{tabular}{ |c|c|c| } 
		\hline
		$\l \in \mathcal{P}_{od}(6)$ & moex$(\l)$ & meex$(\l)$\\ 
		\hline 
		$6$ & $1$& $2$ \\ 
		\hline 
		$5+1$ & $3$ & $2$\\  
		\hline
		$4+2$ & $1$ & $6$\\
		\hline
		$3+2+1$ & $5$ & $4$\\
		\hline
		$2+2+2$ & $1$ & $4$\\
		\hline
		Total & $\s_{od}\text{moex}(6)=11$ & $\s_{od}\text{meex}(6)=18$\\
		\hline 
	\end{tabular}
\end{center}
To state the results in this context, let us we recall $\sigma^*(q)$, studied by Cohen \cite[Equation (2)]{Cohen},
\begin{equation}\label{cohensigma}
\s^*(q):=2\sum_{n=1}^{\infty}\frac{(-1)^nq^{n^2}}{(q;q^2)_n}.
\end{equation}
The following theorem relates the generating function of $\sigma_{od}\text{moex}(n)$ (resp. $\sigma_{od}\text{meex}(n)$) to $\s^*(q)$ (resp. to a modular form).
\begin{theorem}\label{thm_sigma}
We have
	\begin{equation}\label{thm_sigma1}
	\sum_{m=0}^{\infty}\sigma_{od}\textnormal{moex}(n)q^n= \frac{(-q;q^2)_{\infty}}{(q^2;q^2)_{\infty}}\Bigl(1+\s^*(-q)\Bigr)
	\end{equation} and 
	\begin{equation}\label{thm_sigma2}
	\sum_{m=0}^{\infty}\sigma_{od}\textnormal{meex}(n)q^n=2 \sum_{n=0}^{\infty}p_{ed}(n)q^n.
	\end{equation}
\end{theorem}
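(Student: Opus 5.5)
The plan is to write each statistic as a sum of indicator functions of ``initial runs,'' and then evaluate the generating function of each indicator by removing the forced parts. Throughout, $\mathcal{P}_{od}(n)$ consists of partitions whose odd parts are distinct, with unrestricted even parts. Its generating function is $(-q;q^2)_\infty/(q^2;q^2)_\infty$.

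\emph{The moex identity.} For $\l\in\mathcal{P}_{od}(n)$ we have $\text{moex}(\l)=2m+1$ exactly when $1,3,\dots,2m-1$ are parts of $\l$ and $2m+1$ is not. Hence
\[
\text{moex}(\l)=1+2\,\#\{m\ge 1:\ 1,3,\dots,2m-1 \text{ are all parts of } \l\}.
\]
Summing over $\l$ and interchanging the sums reduces the problem to counting, for each $m$, the partitions in $\mathcal{P}_{od}(n)$ that contain $1,3,\dots,2m-1$. Since the odd parts are distinct, these parts occur exactly once. Deleting them contributes $q^{1+3+\cdots+(2m-1)}=q^{m^2}$, and what remains is a partition with distinct odd parts all greater than $2m-1$ and arbitrary even parts. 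The generating function of this subfamily is therefore
\[
q^{m^2}\frac{(-q^{2m+1};q^2)_\infty}{(q^2;q^2)_\infty}=\frac{(-q;q^2)_\infty}{(q^2;q^2)_\infty}\,\frac{q^{m^2}}{(-q;q^2)_m}.
\]
This gives
\[
\sum_{n\ge0}\sigma_{od}\text{moex}(n)q^n=\frac{(-q;q^2)_\infty}{(q^2;q^2)_\infty}\Bigl(1+2\sum_{m\ge1}\frac{q^{m^2}}{(-q;q^2)_m}\Bigr).
\]
Finally, substituting $q\mapsto -q$ in \eqref{cohensigma} gives $(-1)^m(-q)^{m^2}=q^{m^2}$, because $m^2\equiv m \pmod 2$, and $(-q;-q^2)_m$ becomes $(-q;q^2)_m$ since $q^2$ is unchanged. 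Thus $\s^*(-q)=2\sum_{m\ge1}q^{m^2}/(-q;q^2)_m$, which is \eqref{thm_sigma1}.

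\emph{The meex identity.} In the same way,
\[
\text{meex}(\l)=2+2\,\#\{m\ge1:\ 2,4,\dots,2m \text{ are all parts of } \l\}.
\]
For each $m$, removing one copy of each of $2,4,\dots,2m$ is a bijection onto all of $\mathcal{P}_{od}$, because even parts are unrestricted. This removal contributes $q^{m(m+1)}$. Hence
\[
\sum_{n\ge0}\sigma_{od}\text{meex}(n)q^n=2\,\frac{(-q;q^2)_\infty}{(q^2;q^2)_\infty}\sum_{m\ge0}q^{m(m+1)}.
\]
By Gauss's identity, $\sum_{m\ge0}q^{m(m+1)}=(q^4;q^4)_\infty/(q^2;q^4)_\infty$. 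Using $(q^4;q^4)_\infty/(q^2;q^2)_\infty=1/(q^2;q^4)_\infty$, the right-hand side becomes $2(-q;q^2)_\infty/(q^2;q^4)_\infty^2$. Since $(-q;q^2)_\infty(q;q^2)_\infty=(q^2;q^4)_\infty$, this equals
\[
\frac{2}{(q;q^2)_\infty(q^2;q^4)_\infty}=\frac{2(-q^2;q^2)_\infty}{(q;q^2)_\infty},
\]
which is twice the generating function of $p_{ed}(n)$. That proves \eqref{thm_sigma2}.

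\emph{Main obstacle.} Neither step should be hard. The points needing care are the sign bookkeeping in $\s^*(-q)$ and recognizing Gauss's triangular-number identity. The only combinatorial subtlety is that forced even parts may have higher multiplicity. This is harmless because we remove exactly one copy of each, which gives a bijection onto the whole family.
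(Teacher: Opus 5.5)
Your proposal is correct and is essentially the paper's alternative proof. Your rewriting $\text{moex}(\lambda)=1+2\#\{m\ge1:\ 1,3,\dots,2m-1\in\lambda\}$ and $\text{meex}(\lambda)=2+2\#\{m\ge1:\ 2,4,\dots,2m\in\lambda\}$ is exactly the paper's decomposition $\sigma_{od}\text{moex}(n)=M^{od}_0(n)+2\sum_{j\ge1}M^{od}_{2j-1}(n)$ and $\sigma_{od}\text{meex}(n)=2\sum_{j\ge0}\overline{M}^{od}_{2j}(n)$. It uses the same removal of the forced parts (removing $1,3,\dots,2j-1$, resp.\ one copy each of $2,4,\dots,2j$) and the same appeal to Gauss's identity; the paper's first proof, which conditions on the exact value and telescopes, is just the algebraic form of this tail-sum. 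The only blemish is the garbled phrase ``$(-q;-q^2)_m$ becomes $(-q;q^2)_m$'', which should say that $(q;q^2)_m$ becomes $(-q;q^2)_m$ under $q\mapsto -q$.
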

As an immediate consequence of Theorem \ref{thm_sigma}, we have the following identity.
\begin{theorem}\label{rem2}
	For $n\in \mathbb{N}$, we have $\sigma_{od}\textnormal{meex}(n)=2 p_{ed}(n)$.
\end{theorem}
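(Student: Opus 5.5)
Theorem~\ref{rem2} follows from \eqref{thm_sigma2} in Theorem~\ref{thm_sigma} by comparing coefficients of $q^n$ on both sides. The real work is proving \eqref{thm_sigma2}. Since the statement is so clean, I would also give a direct bijective or generating-function argument that does not depend on the proof of Theorem~\ref{thm_sigma}.

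\textbf{Generating function.} For $\l\in\mathcal{P}_{od}(n)$, write $\text{meex}(\l)=2k$ where $k\ge 1$. This holds exactly when $2,4,\dots,2k-2$ are all parts of $\l$ and $2k$ is not. The remaining even parts are at least $2k+2$ and may repeat, while the odd parts are distinct and unrestricted. Rather than fixing $\text{meex}(\l)=2k$, I would use the tail-sum identity
\[
\text{meex}(\l)=2\sum_{j\ge 0}\mathbf{1}\bigl[2,4,\dots,2j\ \text{are all parts of}\ \l\bigr].
\]
The partitions containing $2,4,\dots,2j$ (each at least once) have generating function
\[
q^{j(j+1)}\frac{(-q;q^2)_\infty}{(q^2;q^2)_\infty}.
\]
Summing over $j$ gives
\[
\sum_{n\ge0}\sigma_{od}\text{meex}(n)q^n=2\frac{(-q;q^2)_\infty}{(q^2;q^2)_\infty}\sum_{j\ge0}q^{j(j+1)}.
\]

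\textbf{Identification with $p_{ed}$.} It remains to check that
\[
\frac{(-q;q^2)_\infty}{(q^2;q^2)_\infty}\sum_{j\ge0}q^{j^2+j}=\frac{(-q^2;q^2)_\infty}{(q;q^2)_\infty}.
\]
By Gauss's identity,
\[
\sum_{j\ge0}q^{j^2+j}=\frac{(q^4;q^4)_\infty}{(q^2;q^4)_\infty}=(q^4;q^4)_\infty(-q^2;q^2)_\infty,
\]
where the second form uses Euler's identity $1/(q^2;q^4)_\infty=(-q^2;q^2)_\infty$. Using $(q^4;q^4)_\infty=(q^2;q^2)_\infty(-q^2;q^2)_\infty$, the left side reduces to
\[
(-q;q^2)_\infty(-q^2;q^2)_\infty^2.
\]
Now $(-q;q^2)_\infty(-q^2;q^2)_\infty=(-q;q)_\infty=1/(q;q^2)_\infty$, so the left side equals $(-q^2;q^2)_\infty/(q;q^2)_\infty$, as required.

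The only real obstacle is recalling the right form of Gauss's triangular-number identity and handling the product manipulations carefully. The remaining steps are bookkeeping. Comparing coefficients of $q^n$ for $n\ge1$ then yields $\sigma_{od}\text{meex}(n)=2p_{ed}(n)$.
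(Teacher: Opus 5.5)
Your argument is correct and is essentially the paper's: the paper obtains Theorem~\ref{rem2} by comparing coefficients in \eqref{thm_sigma2}, and its alternative proof of \eqref{thm_sigma2} uses exactly your tail-sum decomposition $\sigma_{od}\text{meex}(n)=2\sum_{j\ge 0}\overline{M}^{od}_{2j}(n)$ with $\overline{M}^{od}_{2j}(n)=p_{od}(n-j(j+1))$. The identification with $p_{ed}$ then uses Gauss's identity \eqref{Gauss} with $q\mapsto q^2$ and the same product bookkeeping you carry out, and your summation range $j\ge 0$ is the correct one.
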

\allowdisplaybreaks
\noindent For instance, for $n=5$, we have 
\begin{table}[ht]
	\centering
	\begin{tabular}{|l|c|c|c|c|c|}
		\hline
		$\l\in\mathcal{P}_{od}(5)$ & $5$ & $4+1$ & $3+2$ & $2+2+1$ & Total\\  \hline
		$\text{meex}(\l)$ & $2$ & $2$ & $4$ & $4$ & $12$ \\ \hline
	\end{tabular}
\end{table}
and $p_{ed}(5)=6$ enumerated by $5, 4+1, 3+2, 3+1+1, 2+1+1+1, 1+1+1+1+1+1$. Thus $\s_{od}\text{meex}(5)=12=2 p_{ed}(5)$. The monotonicity of $\{\sigma_{od}\text{moex}(n)\}_{n\ge 0}$ is stated in the following theorem.
\begin{theorem}\label{increasing}
	For $n \ge 4$, we have $\sigma_{od}\textnormal{moex}(n+1)>\sigma_{od}\textnormal{moex}(n)$.
\end{theorem}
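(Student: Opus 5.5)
We would prove Theorem~\ref{increasing} bijectively, without going through \eqref{thm_sigma1}. The alternating signs in $\s^*(-q)$ make a direct coefficient comparison awkward. The basic observation is that $\text{moex}(\l)$ depends only on the set $O(\l)$ of odd parts of $\l$: it is the least odd integer not in $O(\l)$. The even parts may be altered freely without changing it. The plan is to build an injection
$$\phi:\mathcal{P}_{od}(n)\to\mathcal{P}_{od}(n+1)$$
with $\text{moex}(\phi(\l))\ge \text{moex}(\l)$ for every $\l$, and then to exhibit, for $n\ge 4$, one element outside the image or one $\l$ where the inequality is strict. This gives $\sigma_{od}\text{moex}(n+1)>\sigma_{od}\text{moex}(n)$.

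We would define $\phi$ by cases.
\begin{itemize}
\item[(i)] If $1\notin\l$, so that $\text{moex}(\l)=1$, set $\phi(\l)=\l\cup\{1\}$. Then $\text{moex}(\phi(\l))\ge 3$, so the inequality is strict here.
\item[(ii)] If $1\in\l$ and the largest part $a$ is even, replace $a$ by $a+1$. This new odd part exceeds every other part, so it is distinct from them. The odd set only grows, so moex does not decrease.
\item[(iii)] If $1\in\l$ and the largest part $a\ge 3$ is odd, we must avoid deleting the odd part $a$, since that could lower moex when $O(\l)=\{1,3,\dots,a\}$. Instead we modify the even parts: replace the part $1$ by $2$ and append a new part $1$. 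This turns $1$ into $1+2$ at the level of multisets, and the odd set is unchanged. Equivalently, we adjoin a part $2$ after moving the $1$ around appropriately. The precise rule will be chosen so that images in (iii) are recognisable, for instance by containing a distinguished part $2$ with the largest part odd.
\item[(iv)] The degenerate case $\l=(1)$ is treated by hand.
\end{itemize}
With these rules, images of (i), (ii) and (iii) are separated by simple features. Whether $1$ is present and removable distinguishes one case; whether the largest part is odd and exceeds the second largest by at least $2$ distinguishes another; and the presence and multiplicity of $2$ distinguishes the third. Inverting each case then shows $\phi$ is injective.

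The main obstacle is step (iii), together with the disjointness of the images of (i) and (iii). Both produce partitions containing $1$ with possibly odd largest part. The first thing we would try, if the naive rule above collides, is to split (iii) further according to whether $2\in\l$. When $2\in\l$ we replace one part $2$ by $3$ if $3\notin\l$; this keeps the odd set growing. Otherwise we fall back on adjoining $1$ to a part as above. If a clean injection still resists, the fallback is analytic. We would write
$$\sum_{n}\sigma_{od}\text{moex}(n)q^n=\frac{1}{(q^2;q^2)_\infty}\sum_{k}M(k)q^k,$$
where $M(k)$ is the sum of moex over partitions of $k$ into distinct odd parts. We would then show that $(1-q)$ times this series has nonnegative coefficients beyond a small index, using the factor $1/(q^2;q^2)_\infty$ to absorb the parity oscillation of $M$. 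The strict inequality for $n\ge 4$, and the failure for small $n$, would be checked from the table of initial values. The strictness is witnessed in every such case by case (i) applied to, e.g., $\l=(n)$ or $(n-2,2)$.
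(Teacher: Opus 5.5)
\textbf{There is a genuine gap.} Your plan needs an injection $\phi:\mathcal{P}_{od}(n)\to\mathcal{P}_{od}(n+1)$ with $\text{moex}(\phi(\lambda))\ge\text{moex}(\lambda)$ for every $\lambda$. No such injection exists, already at the first case $n=4$. The partition $(3,1)\in\mathcal{P}_{od}(4)$ has moex $5$, whereas the elements $(5),(4,1),(3,2),(2,2,1)$ of $\mathcal{P}_{od}(5)$ have moex $1,3,1,3$. The same happens for every square $n=k^2$: the staircase $(2k-1,\dots,3,1)$ has moex $2k+1$, but no partition of $k^2+1$ with distinct odd parts contains all of $1,3,\dots,2k-1$, since the leftover $1$ cannot be placed. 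The failure also occurs off squares: $(4,3,1)$ and $(3,2,2,1)$ have moex $5$, while $(5,3,1)$ is the only element of $\mathcal{P}_{od}(9)$ with moex $\ge 5$. So no refinement of case (iii) can rescue the argument; moex must be allowed to drop on some partitions, with the loss paid for elsewhere.

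The individual cases also fail on their own terms.
\begin{itemize}
\item Case (i) is already a bijection from $\{\lambda\in\mathcal{P}_{od}(n):1\notin\lambda\}$ onto all of $\{\mu\in\mathcal{P}_{od}(n+1):1\in\mu\}$. So cases (ii) and (iii), whose images contain $1$, must collide with it; for example $(3,2)\mapsto(3,2,1)$ under (i) and $(2,2,1)\mapsto(3,2,1)$ under (ii). Once (i) is fixed, every $\lambda$ containing $1$ would have to land on a partition of moex $1$.
\item Case (iii) is impossible by parity. The size of a partition in $\mathcal{P}_{od}$ is congruent modulo $2$ to its number of odd parts, so no map from size $n$ to size $n+1$ can keep the odd set fixed. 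Indeed, your rule of replacing $1$ by $2$ and appending a new $1$ adds $2$, not $1$.
\item The analytic fallback, nonnegativity of the coefficients of $(1-q)\sum_n\sigma_{od}\text{moex}(n)q^n$, is just the theorem restated with no mechanism. The paper lists exactly this as an open problem (Problem 3). A table of initial values cannot cover all $n$.
\end{itemize}

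The missing idea is a weighted, compensating comparison instead of a pointwise one. The paper's map $f$ adds $1$ either to the largest odd part or to the largest even part, according to a five-way split of $\mathcal{P}_{od}(n)$. Under $f$, moex is preserved or increased except on the class $\mathcal{P}^{[1]}_{od}(n)$: partitions whose odd parts are exactly $1,3,\dots,2r-1$, with $2r-1$ exceeding every even part. On that class moex drops by exactly $2$. The paper then exhibits a set $Y\subset\mathcal{P}_{od}(n+1)\setminus f(\mathcal{P}_{od}(n))$ with $|Y|\ge|\mathcal{P}^{[1]}_{od}(n)|$ and moex at least $3$ on $Y$. This gives $\sigma_{od}\text{moex}(n+1)\ge\sigma_{od}\text{moex}(n)-2|\mathcal{P}^{[1]}_{od}(n)|+3|Y|>\sigma_{od}\text{moex}(n)$: each partition that loses $2$ is paid for by a new partition worth at least $3$. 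At $n=4$, for instance, $(3,1)\mapsto(4,1)$ loses $2$, and $(2,2,1)$, which has no preimage, contributes $3$.
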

\noindent Our next result is on the asymptotics of $\sigma_{od}\text{moex}(n)$.
\begin{theorem}\label{asymp}
	We have, as $n\to \infty$,
	\begin{equation*}
	\sigma_{od}\textnormal{moex}(n) \sim \frac{3e^{\pi\sqrt{\frac{n}{2}}}}{4\sqrt{2}n}.
	\end{equation*}
\end{theorem}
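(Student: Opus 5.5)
The plan is to apply a Tauberian theorem of Ingham type, in the form used by Bringmann, Jennings-Shaffer and Mahlburg, to the generating function from Theorem~\ref{thm_sigma}:
\[
F(q):=\sum_{n\ge 0}\sigma_{od}\textnormal{moex}(n)q^n=\frac{(-q;q^2)_{\infty}}{(q^2;q^2)_{\infty}}\bigl(1+\s^*(-q)\bigr).
\]
That theorem requires three inputs. First, the coefficients must be weakly increasing; Theorem~\ref{increasing} gives this for $n\ge 4$, and the finitely many initial terms do not affect the asymptotics. Second, with $q=e^{-z}$, we need $F(e^{-z})\sim \lambda z^{\beta}e^{\gamma/z}$ as $z\to 0$ in a cone $|\mathrm{Im}\,z|\le \Delta\,\mathrm{Re}\,z$. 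Third, $F(e^{-z})$ must be suitably bounded near the same region. Granting these, the theorem gives
\[
\sigma_{od}\textnormal{moex}(n)\sim \frac{\lambda\,\gamma^{\frac{\beta}{2}+\frac14}}{2\sqrt{\pi}\,n^{\frac{\beta}{2}+\frac34}}\,e^{2\sqrt{\gamma n}}.
\]

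\emph{Step 1: the product.} Write $(-q;q^2)_\infty=(q^2;q^4)_\infty/(q;q^2)_\infty$ and express everything through $(q^k;q^k)_\infty$ and the eta modular transformation. This should give
\[
\frac{(-q;q^2)_\infty}{(q^2;q^2)_\infty}\sim \frac{\sqrt{z}}{2\sqrt{\pi}}\,e^{\frac{\pi^2}{8z}}
\]
in the cone. The exponent comes from $\pi^2/(24z)$ for the distinct odd parts plus $\pi^2/(12z)$ for the unrestricted even parts. Hence $\gamma=\pi^2/8$ and $\beta=1/2$, so $e^{2\sqrt{\gamma n}}=e^{\pi\sqrt{n/2}}$ and the power of $n$ is $n^{-1}$. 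As a sanity check, the same computation with $\lambda=1/(2\sqrt\pi)$ should reproduce the form $e^{\pi\sqrt{n/2}}/(4\sqrt2\,n)$ of $p_{od}(n)$, consistent with Theorem~\ref{asymp2}. I will verify the constant carefully there, since the final $3/(4\sqrt2)$ has to come out as $3$ times it.

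\emph{Step 2: the limit of $1+\s^*(-q)$.} From \eqref{cohensigma},
\[
1+\s^*(-q)=1+2\sum_{n\ge1}\frac{q^{n^2}}{(-q;q^2)_n},
\]
because $(-1)^n(-q)^{n^2}=q^{n^2}$ and $(-q;(-q)^2)_n=(-q;q^2)_n$. Termwise as $q\to1$, the $n$-th summand tends to $2\cdot 2^{-n}$, so the expected limit is $1+2=3$. Multiplying by Step 1 then gives $\lambda=3/(2\sqrt\pi)$ and the claimed constant $\tfrac{3}{4\sqrt2}$.

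\emph{Main obstacle.} The hard part is justifying the limit in Step 2 uniformly for $z$ in the cone, i.e.\ a dominated-convergence bound for $|q^{n^2}/(-q;q^2)_n|$. My plan is to split the factors $1+e^{-(2k+1)z}$ of $(-q;q^2)_n$ according to the size of $(2k+1)\,\mathrm{Re}\,z$, writing $x=\mathrm{Re}\,z$.
\begin{itemize}
\item When $(2k+1)x\le c$ with $c$ small, the argument of $e^{-(2k+1)z}$ is at most $\Delta c$ and its modulus is close to $1$. So $|1+q^{2k+1}|\ge 2-\epsilon$, and these factors contribute roughly $2^{-n}$ decay.
\item When $(2k+1)x>c$, we only have $|1+q^{2k+1}|\ge 1-e^{-c}$. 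However, such $k$ exist only when $n\gg 1/x$, and there $|q|^{n^2}=e^{-n^2x}$ beats $(1-e^{-c})^{-n}$.
\end{itemize}
Together these give a summable majorant independent of $z$, so the sum converges to $3$ in the cone. The same estimates, plus the standard bound on the product away from $q=1$, should supply the boundedness hypothesis of the Tauberian theorem.
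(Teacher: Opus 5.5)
Your outline matches the paper's. You factor the generating function of Theorem~\ref{thm_sigma} as $A_1(q)B(q)$ with $B(q)=1+\sigma^*(-q)$, show $B\to 3$, use the modular asymptotic of $A_1$, and feed the monotonicity of Theorem~\ref{increasing} into a Tauberian theorem. The paper works only on the real axis via Proposition~\ref{ingham}, where the termwise limit is easy to justify because $1+q^{2k+1}\ge 1$ for $0<q<1$. Your cone version of the Tauberian step is the more careful one. However, two steps fail as written.

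\textbf{The constant in Step~1 is off by a factor of $2$.} Since $(-q;q^2)_\infty=(q^2;q^2)_\infty^2/\bigl((q;q)_\infty(q^4;q^4)_\infty\bigr)$, one gets $(-e^{-z};e^{-2z})_\infty\sim e^{\pi^2/(24z)}$ with prefactor exactly $1$. Also $1/(e^{-2z};e^{-2z})_\infty\sim\sqrt{z/\pi}\,e^{\pi^2/(12z)}$. Hence $A_1(e^{-z})\sim\sqrt{z/\pi}\,e^{\pi^2/(8z)}$, as in \eqref{A1(q)asymp}, and $\lambda=3/\sqrt{\pi}$, which is the paper's $\alpha$. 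With your $\lambda=3/(2\sqrt{\pi})$, your own formula gives $\frac{3}{2\sqrt\pi}\cdot\frac{1}{2\sqrt\pi}\cdot\frac{\pi}{2\sqrt2}=\frac{3}{8\sqrt2}$, not $\frac{3}{4\sqrt2}$. The $p_{od}(n)$ sanity check you proposed would have caught this: $\lambda=1/(2\sqrt\pi)$ gives $1/(8\sqrt2)$, not $1/(4\sqrt2)$.

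\textbf{The majorant in your last paragraph does not exist when $c$ is small.} For $(2n-1)x>c$ you compare $e^{-n^2x}$ with $(1-e^{-c})^{-n}=e^{nL}$, where $L=-\log(1-e^{-c})\approx\log(1/c)$. But $e^{n(L-nx)}$ reaches $e^{L^2/(4x)}$ at $n\approx L/(2x)$. The gain $(2-\epsilon)^{-c/(2x)}$ from the small-index factors cannot offset this, so your bound grows like $e^{\kappa/x}$. Even the $k$-dependent bound $|1+q^{2k+1}|\ge 1-e^{-(2k+1)x}$ leaves an exponent near $\pi^2/(24x)$ as $c\to 0$.

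The repair is to make the good range long rather than short. Fix $C\ge 3/2$ with $C\Delta<\pi/2$.
\begin{itemize}
\item For $(2k+1)x\le C$ one has $|1+q^{2k+1}|\ge 1+e^{-C}\cos(C\Delta)=:1+\eta$.
\item Over the remaining indices, $\prod(1-e^{-(2k+1)x})^{-1}\le(1-e^{-C})^{-1}\exp\bigl(\pi^2e^{-C}/(12x)\bigr)$. This is absorbed by $e^{-n^2x/2}$, because $n^2x>C^2/(4x)$.
\end{itemize}
This gives the $z$-independent majorant $(1+\eta)^{-n}+(1-e^{-C})^{-1}e^{-nC/4}$, and hence $B(e^{-z})\to 3$. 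It does so only in cones with $\Delta<\pi/3$.

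If the Tauberian theorem you cite needs the bound in cones of every aperture, you still owe an estimate in the region $\Delta_0x\le|y|\le\Delta x$. There the modular transformation leaves room $e^{\frac{\pi^2}{8}(|z|^{-1}-x|z|^{-2})}\ge e^{c'/x}$ beyond $|A_1(e^{-z})|$, so any polynomial bound on $B(e^{-z})$ would do. Using only $|1+q^{2k+1}|\ge 1-|q|^{2k+1}$ gives $|B(e^{-z})|\lesssim e^{\pi^2/(24x)}$, which exceeds that room (the room is at most $e^{\pi^2/(32x)}$). A bound on the product does not help, since the difficulty is in $B$.
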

The following identity relates $\textnormal{meex}$ statistics and divisibility of even parts modulo $4$.
\begin{theorem}\label{thm_meex-4e}
	Let $p_{od,e}(n,j)$ be the number of paritions into distinct odd part of $n$ each of which has exactly $j$ parts greater than their meex and $p_{4,e}(n,j)$ be the number of partitions of $n$ with only multiples of $4$ appear as even parts and has exactly $j$ even parts. Then 
	$p_{od,e}(n,j)=p_{4,e}(n,j)$.
\end{theorem}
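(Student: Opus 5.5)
We prove the identity through generating functions, reading "parts greater than their meex" as the even parts (counted with multiplicity) that exceed $\text{meex}(\l)$; the subscript $e$ indicates this. We introduce a variable $z$ that marks these parts. We then show that the two-variable generating functions of $p_{od,e}(n,j)$ and $p_{4,e}(n,j)$ coincide, and this reduces to a limiting case of the $q$-Kummer (Bailey--Daum) summation.

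\textbf{Step 1 (the left side).} Take $\l\in\mathcal{P}_{od}(n)$ with $\text{meex}(\l)=2k+2$, where $k\ge 0$. Then the even parts $2,4,\dots,2k$ each occur at least once, the part $2k+2$ does not occur, and the larger even parts $2k+4,2k+6,\dots$ occur freely. Each of these larger parts carries a factor $z$. The odd parts contribute $(-q;q^2)_\infty$. Hence
\begin{equation*}
\sum_{n,j\ge 0}p_{od,e}(n,j)z^jq^n=(-q;q^2)_\infty\sum_{k=0}^{\infty}\frac{q^{k^2+k}}{(q^2;q^2)_k\,(zq^{2k+4};q^2)_\infty}
=\frac{(-q;q^2)_\infty}{(zq^4;q^2)_\infty}\sum_{k=0}^{\infty}\frac{q^{k^2+k}(zq^4;q^2)_k}{(q^2;q^2)_k}.
\end{equation*}

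\textbf{Step 2 (the right side).} In the partitions counted by $p_{4,e}(n,j)$, the odd parts are unrestricted, the even parts are multiples of $4$, and $z$ marks each even part. So
\begin{equation*}
\sum_{n,j\ge 0}p_{4,e}(n,j)z^jq^n=\frac{1}{(q;q^2)_\infty(zq^4;q^4)_\infty}.
\end{equation*}
We rewrite $1/(q;q^2)_\infty$ as $(-q;q)_\infty=(-q;q^2)_\infty(-q^2;q^2)_\infty$. We also use $(zq^4;q^2)_\infty=(zq^4;q^4)_\infty(zq^6;q^4)_\infty$. With $Q=q^2$, the theorem is then equivalent to
\begin{equation*}
\sum_{k=0}^{\infty}\frac{Q^{k(k+1)/2}(zQ^2;Q)_k}{(Q;Q)_k}=(-Q;Q)_\infty\,(zQ^3;Q^2)_\infty .
\end{equation*}
As a check, at $z=0$ this is Euler's identity for $(-Q;Q)_\infty$.

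\textbf{Step 3 (the key identity).} We use the $q$-Kummer (Bailey--Daum) summation
\begin{equation*}
\sum_{n\ge0}\frac{(a;Q)_n(b;Q)_n}{(Q;Q)_n(aQ/b;Q)_n}\Bigl(-\frac{Q}{b}\Bigr)^n=\frac{(-Q;Q)_\infty(aQ;Q^2)_\infty(aQ^2/b^2;Q^2)_\infty}{(aQ/b;Q)_\infty(-Q/b;Q)_\infty}.
\end{equation*}
Put $a=zQ^2$ and let $b\to\infty$. Termwise, $(b;Q)_n(-Q/b)^n\to Q^{n(n-1)/2}Q^n=Q^{n(n+1)/2}$. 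On the right, all factors involving $1/b$ tend to $1$. The result is exactly the identity of Step 2.

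The main obstacle is justifying the limit $b\to\infty$. This should be routine, by dominated convergence for $|Q|<1$, since the terms are bounded by a summable multiple of $|Q|^{n(n+1)/2}$ uniformly for large $|b|$. If one prefers to avoid this, Step 3 can also be proved directly. Call the left side $f(z)$. Writing $(zQ^2;Q)_k=(zQ^2;Q)_{k-1}(1-zQ^{k+1})$ should yield the functional equation $f(z)=(1-zQ^3)f(zQ^2)$, which together with $f(0)=(-Q;Q)_\infty$ determines $f$ as a power series in $z$. Comparing coefficients of $z^jq^n$ then finishes the proof.
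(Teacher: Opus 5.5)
Your proof is correct. Your reading of $j$ as the number of even parts exceeding $\mathrm{meex}(\lambda)$ is exactly what the paper's generating function encodes: it attaches $w$ only to even parts $>2m$, and if odd parts were also counted the statement would already fail at $n=3$. Your Step 1 is the paper's starting expression.

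The two arguments differ only in how $\sum_{k}q^{k^2+k}(zq^4;q^2)_k/(q^2;q^2)_k$ is evaluated. The paper writes this sum as the $A\to0$ limit of ${}_2\phi_1(-q^2/A,\,zq^4;\,0;\,q^2,A)$. Heine's transformation then turns it into $(zq^4;q^2)_\infty(-q^2;q^2)_\infty\sum_m (zq^4)^m/\bigl((q^2;q^2)_m(-q^2;q^2)_m\bigr)$. Using $(q^2;q^2)_m(-q^2;q^2)_m=(q^4;q^4)_m$ and the $q$-binomial theorem, the paper arrives at $(-q;q)_\infty/(zq^4;q^4)_\infty$. You instead divide out the product side first. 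This isolates the single classical identity $\sum_k Q^{\binom{k+1}{2}}(x;Q)_k/(Q;Q)_k=(-Q;Q)_\infty(xQ;Q^2)_\infty$ at $x=zQ^2$, which you recognize as the $b\to\infty$ case of the Bailey--Daum sum. Your route makes the underlying identity explicit and uses one summation instead of a transformation followed by a summation, but it imports a formula not among the paper's preliminaries. The paper's route stays within the Heine and Cauchy lemmas it already recorded. Both need an equally routine limit interchange.

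Two small corrections.

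\textbf{The majorant.} The bound you state is not right. For fixed $b$, the factors $|Q^{i+1}-Q/b|$ tend to $|Q/b|$ as $i\to\infty$. So $\prod_{i<n}|Q^{i+1}-Q/b|$ decays only geometrically in $n$ and is not $O(|Q|^{n(n+1)/2})$ uniformly in $b$. The fix is easy: for $|b|\ge B$ with $|Q|(1+1/B)<1$, every factor is at most $|Q|(1+1/B)$. Combined with the uniform bounds $|(a;Q)_n/(Q;Q)_n|\le(-|a|;|Q|)_\infty/(|Q|;|Q|)_\infty$ and $1/|(aQ/b;Q)_n|\le 1/(|aQ/b|;|Q|)_\infty$, this geometric majorant gives dominated convergence.

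\textbf{The fallback functional equation.} It does not follow from the single factorization you indicate; you need an auxiliary series. Let $F(x)=\sum_k Q^{\binom{k+1}{2}}(x;Q)_k/(Q;Q)_k$, and let $G(x)$ be the same sum with an extra factor $Q^k$. One checks $F(x)-F(xQ)=-xQ\,G(xQ)$ and $F(x)-G(x)=Q(1-x)G(xQ)$. Eliminating $G$ gives $F(x)=(1-xQ)F(xQ^2)$, which is your $f(z)=(1-zQ^3)f(zQ^2)$. Since your main route is complete, this is optional.
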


Now, we consider maximal excludant for partitions without repeated odd parts. For $\l \in \mathcal{P}_{od}(n)$, $max(\l)$ is the largest non-negative integer, smaller than the largest part of $\l$ that is not a part of $\l$. Define
\begin{equation}\label{def6}
\overline{a}^{o}_{od}(n):=\hspace{-0.65 cm} \underset{\text{max}(\l) \equiv 1\ \left(\text{mod}\ 2\right)}{\underset{\l \in \mathcal{P}_{od}(n)}{\sum}}\hspace{-0.4 cm} 1\ \ \text{and}\ \ \overline{a}^{e}_{od}(n):=\hspace{-0.65 cm}\underset{\text{max}(\l)\equiv 0\ \left(\text{mod}\ 2\right)}{\underset{\l \in \mathcal{P}_{od}(n)}{\sum}}\hspace{-0.4 cm} 1.
\end{equation}
%
For $n=6$, 
\begin{center}
	\begin{tabular}{ |c|c|c|c| } 
		\hline
		$\l \in \mathcal{P}_{od}(6)$ & max$(\l)$ & max$(\l)\equiv 1\left(\text{mod}\ 2\right)$& max$(\l)\equiv 0\left(\text{mod}\ 2\right)$\\ 
		\hline 
		$6$ & $5$& $5$ &-\\ 
		\hline 
		$5+1$ & $4$ & - & $4$\\  
		\hline
		$4+2$ & $3$ & $3$ &-\\
		\hline
		$3+2+1$ & $0$ &-& $0$\\
		\hline
		$2+2+2$ & $1$ & $1$ &-\\
		\hline
	\end{tabular}
\end{center}
Therefore, $\overline{a}^{o}_{od}(6)=3$ enumerated by $6, 4+2, 2+2+2$ and $\overline{a}^{e}_{od}(6)=2$ enumerated by $5+1, 3+2+1$. Recall Ramanujan's third order mock theta function \cite[Equation (2.1.6)]{AB}
$$\nu(q):=\sum_{n=0}^{\infty}\frac{q^{n(n+1)}}{\left(-q;q^2\right)_{n+1}}.$$
\begin{theorem}\label{thm_abarod} We have
	\begin{equation}\label{thm_abarod1}
	\sum_{n=0}^{\infty}\overline{a}^{o}_{od}(n)q^n=\frac{q}{(q^3;q^2)_{\infty}}\left(\nu(-q)-\frac{1}{1-q}\right)+\frac{q^2(-q;q^2)_{\infty}}{(1+q)(q^4;q^2)_{\infty}}\sum_{m\ge 0}\frac{q^{3m}}{1+q^{2m+1}}
	\end{equation} and
	\begingroup
	\allowdisplaybreaks 
	\begin{align}\label{thm_abarod2}
	\sum_{n=0}^{\infty}\overline{a}^{e}_{od}(n)q^n&=\sum_{m=0}^{\infty}(-q,q)_{m}q^{m+1}+\frac{(-q;q^2)_{\infty}}{(q^2;q^2)_{\infty}}\sum_{m=1}^{\infty}\frac{(-q;q)_{m-1}(q^{2};q^2)_{m-1}}{(-q;q^2)_{m}}q^{3m}\nonumber\\
	&\hspace{4 cm}+\frac{(-q;q^2)_{\infty}}{(q^2;q^2)_{\infty}}\sum_{m=1}^{\infty}\frac{(-q;q)_{m-1}(q^{2};q^2)_{m}}{(-q;q^2)_{m+1}}q^{3m+1}.
	\end{align}
	\endgroup
\end{theorem}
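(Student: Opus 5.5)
Write $M=a(\lambda)$ for the largest part and $k=\max(\lambda)$. Since $\lambda\in\mathcal P_{od}(n)$ has distinct odd parts, the condition $\max(\lambda)=k$ means three things: every integer $k+1,\dots,M$ is a part; $k$ is not a part (when $k\ge1$); and below $k$ the parts are unrestricted apart from odd parts being distinct. So I will decompose $\lambda$ as follows. The \emph{tail} is the block $\{k+1,\dots,M\}$. Each odd integer in it appears exactly once. Each even integer in it appears with multiplicity at least one. The \emph{head} is an arbitrary partition into parts $<k$ with distinct odd parts. The generating function is then a double sum over $k$ and $M$, which I split according to the parity of $k$.

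\textbf{Tail.} For fixed $k<M$, the tail contributes $q^{(k+1)+\cdots+M}/\prod_{k<2j\le M}(1-q^{2j})$. The head contributes $(-q;q^2)_{\lceil (k-1)/2\rceil}/(q^2;q^2)_{\lfloor (k-1)/2\rfloor}$.

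\textbf{Case $k=0$.} Here $\lambda=(M,M-1,\dots,1)$ with even parts possibly repeated. I expect this piece to combine with the contributions where the head is empty to give the first sum $\sum_m(-q;q)_m q^{m+1}$ in \eqref{thm_abarod2}, or a similar telescoped expression; I will check this against small $n$.

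\textbf{Odd $k=2r+1$.} The tail starts at the even number $2r+2$. I will sum over the top $M$ first, treating the two parities of $M$ separately, and then over $r$. Writing the tail as $q^{\binom{M+1}{2}-\binom{k+1}{2}}(q^2;q^2)_r/(q^2;q^2)_{\lfloor M/2\rfloor}$, the sum over $M$ becomes a sum of terms $q^{\binom{M+1}{2}}/(q^2;q^2)_{\lfloor M/2\rfloor}$. The inner sum over $r\le\lfloor (M-1)/2\rfloor$ of head times $(q^2;q^2)_r q^{-\binom{k+1}{2}}$ telescopes or collapses to a finite product. After interchanging the order of summation, the goal is to recognise $\nu(-q)$ via
\[
\nu(-q)=\sum_{n\ge0}\frac{q^{n(n+1)}}{(q;q^2)_{n+1}},
\]
and to obtain the Lambert-type series $\sum_m q^{3m}/(1+q^{2m+1})$. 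For the latter I expect to need a $q$-binomial/Heine-type transformation (Heine's ${}_2\phi_1$ transformation in base $q^2$), or a partial-fraction step that turns $\sum_m (q;q^2)_m z^m/(q^2;q^2)_m$-type sums into $1/(1+q^{2m+1})$ terms. The prefactors $q/(q^3;q^2)_\infty$ and $q^2(-q;q^2)_\infty/((1+q)(q^4;q^2)_\infty)$ indicate exactly which extreme cases were separated out: $k=1$, and the tail beginning at $2$.

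\textbf{Even $k=2r\ge2$.} This case is analogous, except that now the tail starts at an odd part. Keeping the summation index $m$ equal to the tail length, $(-q;q)_{m-1}$ arises from the head parts below $k$ interacting with the shifted tail, and $(q^2;q^2)_{m-1}/(-q;q^2)_m$ comes from normalising by the full $(-q;q^2)_\infty/(q^2;q^2)_\infty$. The two sums with $q^{3m}$ and $q^{3m+1}$ correspond to the two parities of $M$.

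\textbf{Main obstacle.} Matching these raw double sums to the exact stated forms is the hard part, especially producing $\nu(-q)-1/(1-q)$ and the Lambert sum in \eqref{thm_abarod1}. I will first carry out the raw decomposition carefully, then try Heine's transformation, and then verify the result numerically against the $n=6$ table ($\overline{a}^{o}_{od}(6)=3$ and $\overline{a}^{e}_{od}(6)=2$) and other small cases.
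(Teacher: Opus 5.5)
\textbf{There is a genuine gap: the step you defer cannot be carried out, because both identities are false as stated.}

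Your head/tail split is correct, and it is the same split the paper uses. The head consists of the parts below $\max(\lambda)$, with odd parts distinct. The tail contains every integer from $\max(\lambda)+1$ to $a(\lambda)$, the odd ones once and the even ones at least once. Your head and tail factors are also right. But the proposal stops there. The passage from these double sums to the right-hand sides of \eqref{thm_abarod1} and \eqref{thm_abarod2} is only anticipated, and the small-$n$ check you plan would show that it is impossible.

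\emph{Counterexample to \eqref{thm_abarod2}.} The second and third sums start at $q^3$ and $q^4$. So the $q^2$-coefficient of the right side comes only from the term $(1+q)q^2$ of the first sum, and it equals $1$. But $\mathcal{P}_{od}(2)=\{(2)\}$ and $\max((2))=1$, so $\overline{a}^{e}_{od}(2)=0$. At $q^6$ the right side gives $4+2+0=6$, against the table's $\overline{a}^{e}_{od}(6)=2$; this even exceeds $p_{od}(6)=5$.

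In your own terms: your $k=0$ piece is $\sum_{M\ge1}q^{\binom{M+1}{2}}/(q^2;q^2)_{\lfloor M/2\rfloor}=q+q^3+q^5+q^6+\cdots$. The pieces with even $k\ge 2$ start at $q^3$. So nothing can combine into $\sum_{m\ge0}(-q;q)_mq^{m+1}=(-q;q)_\infty-1=q+q^2+2q^3+\cdots$.

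\emph{Counterexample to \eqref{thm_abarod1}.} Expanding the right side gives $q^2+2q^4+2q^5+4q^6+O(q^7)$. The table in the introduction gives $\overline{a}^{o}_{od}(6)=3$.

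\emph{Where the paper goes wrong.} The paper reaches these forms through two errors, so following its route will not rescue your plan.

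First, it replaces the tail factor by $(-q;q)_{m-1}q^{2km}$ when $\max(\lambda)=2k-1$, and by $(-q;q)_{m-1}q^{(2k+1)m}$ when $\max(\lambda)=2k$, justifying this by conjugation. This, and not any interaction between head and tail, is where $(-q;q)_{m-1}$ comes from. But the conjugate of $m^{2k}$ together with distinct parts below $m$ is a run of consecutive values starting at $2k$, with arbitrary positive multiplicities. The condition that odd parts occur once is therefore lost:
\begin{itemize}
\item[(i)] For $\max(\lambda)=0$ this gives $(-q;q)_\infty-1$. That counts $(1,1)$, the conjugate of $(2)$, and is exactly the spurious $q^2$ above.
\item[(ii)] For $\max(\lambda)=1$ it counts $(3,3,2)$.
\end{itemize}
Your tail factor is the correct one.

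Second, just before Theorem \ref{APT} is applied, the paper uses $(-q;q)_{m-1}=1/(q;q^2)_{m-1}$. This holds only for $m=\infty$; at $m=2$ it reads $1+q=1/(1-q)$. It changes the $m=2$ summand by $q^6/(1+q^3)$, and that is what raises the $q^6$-coefficient from the true value $3$ to the $4$ of \eqref{thm_abarod1}.

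\emph{What your decomposition does prove.} Writing $\max(\lambda)=2r+1$,
\begin{equation*}
\sum_{n\ge0}\overline{a}^{o}_{od}(n)q^n=\sum_{r\ge0}(-q;q^2)_r\sum_{M\ge 2r+2}\frac{q^{\binom{M+1}{2}-\binom{2r+2}{2}}}{(q^2;q^2)_{\lfloor M/2\rfloor}},
\end{equation*}
and there is an analogous double sum for $\overline{a}^{e}_{od}$. The theorem as stated has to be corrected, not proved.
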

Analogous to the parity of minimal excludants, here we consider parity of maximal exculdant statistics for partitions with odd parts distinct. For $\l \in \mathcal{P}_{od}(n)$, $moax(\l)$ (resp. $meax(\l)$) denote the largest odd integer (resp. the largest even integer) smaller than the largest part of $\pi$ which is not a part of $\l$. Define
\begin{equation}\label{def7}
\sigma_{od}\text{moax}(n):=\hspace{-0.2 cm}\underset{\l \in \mathcal{P}_{od}(n)}{\sum}moax(\l)\ \ \text{and}\ \ \sigma_{od}\text{meax}(n):=\hspace{-0.2 cm}\underset{\l \in \mathcal{P}_{od}(n)}{\sum}meax(\l). 
\end{equation}
For instance, with $n=6$, we have
\begin{center}
	\begin{tabular}{ |c|c|c| } 
		\hline
		$\pi \in \mathcal{P}_{od}(6)$ & $moax(\l)$ & $meax(\l)$\\ 
		\hline 
		$6$ & $5$& $4$ \\ 
		\hline 
		$5+1$ & $3$ & $4$\\  
		\hline
		$4+2$ & $3$ & $0$\\
		\hline
		$3+2+1$ & $-$ & $0$\\
		\hline
		$2+2+2$ & $1$ & $0$\\
		\hline
		Total & $\s_{od}\text{moax}(6)=12$ & $\s_{od}\text{meax}(6)=8$\\
		\hline 
	\end{tabular}
\end{center}
The generating functions of $\sigma_{od}\text{moax}(n)$ and $\sigma_{od}\text{meax}(n)$ are given below. 
\begin{theorem}\label{thm_sigmabar}
	We have
	\begin{equation}\label{thm_sigmabar1}
	\sum_{m=0}^{\infty}\sigma_{od}\textnormal{moax}(n)q^n=\frac{1}{(q^2;q^2)_{\infty}}\sum_{k=1}^{\infty}(2k-1)(-q;q^2)_{k-1}\sum_{m=1}^{\infty}q^{m(m+2k)},
	\end{equation} and 
	\begin{equation}\label{thm_sigmabar2}
	\sum_{m=0}^{\infty}\sigma_{od}\textnormal{meax}(n)q^n=2\frac{(-q;q^2)_{\infty}}{(q^2;q^2)_{\infty}} \Biggl(1-(q^4;q^4)_{\infty}+\sum_{n=1}^{\infty}\frac{q^{2(n+1)}}{1-q^{2(n+1)}}\Bigl(1-(q^4;q^4)_n\Bigr)\Biggr).
	\end{equation}
\end{theorem}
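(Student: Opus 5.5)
We prove \eqref{thm_sigmabar1} and \eqref{thm_sigmabar2} separately. In each case we fix the value of the excludant, describe exactly which partitions in $\mathcal{P}_{od}(n)$ attain it, and then sum the resulting generating functions.

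\emph{The moax identity.} Fix $k\ge1$ and describe all $\l\in\mathcal{P}_{od}$ with $\mathrm{moax}(\l)=2k-1$. Such a $\l$ has these properties:
\begin{itemize}
\item $2k-1$ is not a part, and $a(\l)>2k-1$;
\item every odd integer in $(2k-1,a(\l))$ is a part;
\item the odd parts below $2k-1$ are arbitrary (distinct), giving the factor $(-q;q^2)_{k-1}$;
\item there is no odd part above the chain $2k+1,\dots,2k+2m-1$, where $m\ge0$ is the chain length. This chain contributes $q^{m^2+2km}$.
\end{itemize}
We then split according to whether $a(\l)$ is odd or even.
\begin{itemize}
\item If $a(\l)=2k+2m-1$ with $m\ge1$, the even parts are at most $2k+2m-2$, contributing $1/(q^2;q^2)_{k+m-1}$.
\item If $a(\l)=2k+2m$ with $m\ge0$, the part $2k+2m$ occurs at least once and the other even parts are smaller or equal, contributing $q^{2k+2m}/(q^2;q^2)_{k+m}$.
\end{itemize}
Adding the two cases for each $m\ge1$ gives $q^{m^2+2km}/(q^2;q^2)_{k+m}$. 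The case $m=0$ adds $q^{2k}/(q^2;q^2)_k$. Multiplying by $(2k-1)$ and summing over $k$, the generating function is
\[
\sum_{k\ge1}(2k-1)(-q;q^2)_{k-1}\Bigl(\frac{q^{2k}}{(q^2;q^2)_k}+\sum_{m\ge1}\frac{q^{m^2+2km}}{(q^2;q^2)_{k+m}}\Bigr).
\]
It remains to show, for each fixed $k$, that the bracket equals $\frac{1}{(q^2;q^2)_\infty}\sum_{m\ge1}q^{m(m+2k)}$. This is the main obstacle.

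For this identity I would first write $1/(q^2;q^2)_{k+m}=(q^{2k+2m+2};q^2)_\infty/(q^2;q^2)_\infty$. I would then try to prove
\[
\frac{q^{2k}}{(q^2;q^2)_k}+\sum_{m\ge1}\frac{q^{m^2+2km}}{(q^2;q^2)_{k+m}}=\frac{1}{(q^2;q^2)_\infty}\sum_{m\ge1}q^{m^2+2km}
\]
by a telescoping argument in $m$, using $q^{m^2+2km}-q^{(m+1)^2+2k(m+1)}$ and the recursion $(q^{2N};q^2)_\infty=(1-q^{2N})(q^{2N+2};q^2)_\infty$. Failing that, I would compare coefficients of $z^k$ in a Heine-type transformation of $\sum_m z^m q^{m^2}/(q^2;q^2)_m$. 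As a fallback, the combinatorial content is a bijection that absorbs an unrestricted partition into even parts into the odd chain $2k+1,\dots,2k+2m-1$ by sliding the chain. I would try to realize this by inserting even parts $2j>2k+2m$ and extending the chain accordingly, in the spirit of a Durfee-rectangle decomposition. If the bracket identity fails as stated, I would recheck the case analysis first, in particular the $m=0$ term.

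\emph{The meax identity.} Here $\mathrm{meax}(\l)\in\{0,2,4,\dots\}$ is always defined, since $0$ is never a part. We use summation by parts:
\[
\sigma_{od}\mathrm{meax}=2\sum_{k\ge1}\#\{\l:\mathrm{meax}(\l)\ge 2k\}.
\]
The event $\mathrm{meax}(\l)\ge2k$ means that some even $2j\ge 2k$ with $2j<a(\l)$ is missing. We take the complement within $\mathcal{P}_{od}$, whose generating function is $\frac{(-q;q^2)_\infty}{(q^2;q^2)_\infty}$. The complement consists of those $\l$ in which all even numbers from $2k$ up to $a(\l)-1$ are present. Organizing this by $a(\l)$ and by $n$ with $2n+2=\,$the top of the even chain, the weighted chains of even parts should produce the factors $q^{2(n+1)}/(1-q^{2(n+1)})$ times $(q^4;q^4)_n$-type corrections.

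After summing over $k$, the terms should collapse to
\[
2\frac{(-q;q^2)_\infty}{(q^2;q^2)_\infty}\Bigl(1-(q^4;q^4)_\infty+\sum_{n\ge1}\frac{q^{2(n+1)}}{1-q^{2(n+1)}}\bigl(1-(q^4;q^4)_n\bigr)\Bigr).
\]
Here $(q^4;q^4)_n$ arises from the identity $\frac{(-q^2;q^2)_n}{(q^2;q^2)_n^{-1}}=(q^4;q^4)_n$ when distinct-odd and even-chain factors are combined. Matching these finite products is the delicate step, but it should reduce to standard finite $q$-product manipulations once the case split is written down.
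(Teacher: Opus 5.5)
\textbf{There is a genuine gap, and it cannot be closed:} the identity you reduce to is false, and so is the statement.

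\textbf{The moax identity.} Your case analysis is correct for the statistic as defined (the largest missing odd integer below the largest part $a(\lambda)$). It shows
\begin{equation*}
\sum_{n\ge0}\sigma_{od}\mathrm{moax}(n)q^n=\sum_{k\ge1}(2k-1)(-q;q^2)_{k-1}\Bigl(\frac{q^{2k}}{(q^2;q^2)_k}+\sum_{m\ge1}\frac{q^{m^2+2km}}{(q^2;q^2)_{k+m}}\Bigr).
\end{equation*}
However, the bracket identity you call the main obstacle is false. For $k=1$ the bracket contains $q^2$, coming from $\lambda=(2)$ with $\mathrm{moax}=1$, whereas $\frac{1}{(q^2;q^2)_\infty}\sum_{m\ge1}q^{m(m+2)}=q^3+q^5+2q^7+\cdots$.

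Indeed \eqref{thm_sigmabar1} itself fails for this statistic. Its right-hand side is $O(q^3)$, yet $\sigma_{od}\mathrm{moax}(2)=1$. Its coefficient of $q^6$ is $3$, not the value $12$ in the paper's own table. So no telescoping, Heine transformation or bijection can finish this step, and rechecking the $m=0$ term will not help.

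The paper's proof reaches \eqref{thm_sigmabar1} only because it requires the odd chain $2k+1,2k+3,\dots$ up to $a(\lambda_{od})$. It then attaches an unrestricted factor $1/(q^2;q^2)_\infty$ for the even parts and omits your $m=0$ case. That computes the excludant below the largest \emph{odd} part, for which \eqref{thm_sigmabar1} does follow at once. It is not the excludant below $a(\lambda)$ used in the definition and in the table.

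\textbf{The meax identity.} Your summation by parts, $\sigma_{od}\mathrm{meax}(n)=2\sum_{k\ge1}\#\{\lambda:\mathrm{meax}(\lambda)\ge 2k\}$, is fine. Nothing after it is actually carried out, and the target is false as well. The partition $\lambda=(3)$ has $\mathrm{meax}(\lambda)=2$, so $\sigma_{od}\mathrm{meax}(3)=2$. But the bracket in \eqref{thm_sigmabar2} is $q^4+O(q^8)$, so the right-hand side is $2q^4+2q^5+2q^6+\cdots$. At $n=6$ this gives $2$ instead of the tabulated $8$.

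The paper's derivation has two problems here.
\begin{enumerate}
\item It has the analogous modelling defect: odd parts are left unrestricted through $(-q;q^2)_\infty$, and the even chain is required to be non-empty and runs only up to $a(\lambda_{er})$.
\item Its logarithmic derivative of $z^2/(z^2q^{2m};q^2)_\infty$ omits the factor $1-z^2q^{2m}$. Restoring it turns $\frac{q^{2(n+1)}}{1-q^{2(n+1)}}$ into $\frac{q^{2n}}{1-q^{2n}}$.
\end{enumerate}
Because of the second problem, \eqref{thm_sigmabar2} fails even for the ``below the largest even part'' version of the statistic, whose sum equals $4$ at $n=6$.

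Both displayed identities must be corrected before they can be proved. Your generating function above is a correct replacement for \eqref{thm_sigmabar1}. The same split according to the parity of $a(\lambda)$ gives the correct series for $\mathrm{meax}$.
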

Now, we study minimal excludants for paritions without repeated even parts. For $\l \in \mathcal{P}_{ed}(n)$, define
\begin{equation}\label{evneq1}
a^{o}_{ed}(n):=\hspace{-1 cm}\underset{mex(\l)\equiv 1\ (\text{mod}\ 2)}{\underset{\l\in \mathcal{P}_{ed}(n)}{\sum}}\hspace{-0.5 cm}1\ \ \ \text{and}\ \ \ a^{e}_{ed}(n):=\hspace{-1 cm}\underset{mex(\l)\equiv 0\ (\text{mod}\ 2)}{\underset{\l\in \mathcal{P}_{ed}(n)}{\sum}}\hspace{-0.5 cm}1.
\end{equation}
Note that $a^{o}_{ed}(n)$ (resp. $a^{o}_{ed}(n)$) counts the number of  partitions of $n$ into distinct even parts with $mex$ odd (resp. even). For instance, $a^{o}_{ed}(6)=4$ enumerated by 
$6,\ 4+2,\ 3+3,\ 2+1+1+1+1$
and $a^{e}_{ed}(6)=5$ enumerated by $5+1,\ 4+1+1,\ 3+2+1,\ 3+1+1+1,\ 1+1+1+1+1+1$. The corresponding generating functions are as follows.
\begin{theorem}\label{evthm1}
We have
$$\sum_{n=0}^{\infty}a^{o}_{ed}(n)q^n=\frac{\left(-q^2;q^2\right)_{\infty}}{\left(q;q^2\right)_{\infty}}\left(\sum_{n=0}^{\infty}\frac{q^{2n^2+n}}{\left(-q^2;q^2\right)_n}-\sum_{n=0}^{\infty}\frac{q^{(2n+1)(n+1)}}{\left(-q^2;q^2\right)_n}\right),$$
and
$$\sum_{n=0}^{\infty}a^{e}_{ed}(n)q^n=\frac{\left(-q^2;q^2\right)_{\infty}}{\left(q;q^2\right)_{\infty}}\sum_{n=0}^{\infty}\frac{q^{2n^2-n}}{\left(-q^2;q^2\right)_n}.$$
\end{theorem}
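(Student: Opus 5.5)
The plan is to compute both generating functions directly, by fixing the value of $\text{mex}(\l)$ and factoring the generating function of the partitions with that mex. For $\l\in\mathcal{P}_{ed}(n)$, saying $\text{mex}(\l)=M$ means the parts $1,2,\dots,M-1$ all occur and $M$ does not. There are two constraints on multiplicities. Each even part $\le M-1$ then occurs exactly once, because even parts are distinct. Each odd part $\le M-1$ occurs at least once. Parts larger than $M$ are constrained only by the rule that even parts are distinct. So for each $M$ the generating function is a product of four independent factors: small evens, small odds, large evens and large odds.

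\emph{Odd mex, $M=2n+1$ with $n\ge 0$.}
\begin{itemize}
\item The evens $2,4,\dots,2n$, each used once, contribute $q^{n(n+1)}$.
\item The even parts $\ge 2n+2$, distinct, contribute $(-q^{2n+2};q^2)_\infty=(-q^2;q^2)_\infty/(-q^2;q^2)_n$.
\item The odds $1,3,\dots,2n-1$, each used at least once, contribute $q^{n^2}/(q;q^2)_n$.
\item The odd parts $\ge 2n+3$, unrestricted, contribute $1/(q^{2n+3};q^2)_\infty$.
\end{itemize}
The key simplification is $\frac{1}{(q;q^2)_n(q^{2n+3};q^2)_\infty}=\frac{1-q^{2n+1}}{(q;q^2)_\infty}$. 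Multiplying the four factors then gives
$$\frac{(-q^2;q^2)_\infty}{(q;q^2)_\infty}\cdot\frac{q^{2n^2+n}\left(1-q^{2n+1}\right)}{(-q^2;q^2)_n}.$$
Since $q^{2n^2+n}q^{2n+1}=q^{(2n+1)(n+1)}$, summing over $n\ge0$ yields exactly the first formula of Theorem \ref{evthm1}. The term $n=0$ accounts for the partitions with no part $1$, including the empty partition.

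\emph{Even mex, $M=2n$ with $n\ge1$.}
\begin{itemize}
\item The evens $2,\dots,2n-2$, each used once, contribute $q^{n(n-1)}$.
\item The even parts $\ge 2n+2$, distinct, contribute $(-q^2;q^2)_\infty/(-q^2;q^2)_n$.
\item The odds $1,\dots,2n-1$, each used at least once, contribute $q^{n^2}/(q;q^2)_n$.
\item The odd parts $\ge 2n+1$ contribute $1/(q^{2n+1};q^2)_\infty$.
\end{itemize}
Here the two odd factors combine to $q^{n^2}/(q;q^2)_\infty$ with no correction factor. Each $n$ therefore contributes $\frac{(-q^2;q^2)_\infty}{(q;q^2)_\infty}\frac{q^{2n^2-n}}{(-q^2;q^2)_n}$.

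The only delicate point is the range of summation in the second formula. Since $\text{mex}(\l)\ge1$, an even mex forces $n\ge1$. The $n=0$ term of the displayed sum equals the full generating function $\frac{(-q^2;q^2)_\infty}{(q;q^2)_\infty}$, and this cannot belong to $\sum a^{e}_{ed}(n)q^n$. For instance, the coefficient of $q^1$ would then be $2$, whereas only the partition $1$ (with mex $2$) contributes, giving $a^{e}_{ed}(1)=1$. So I would prove the second identity with the sum over $n\ge1$, and read the statement's sum as starting at $n=1$.

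As a consistency check, adding the two formulas should reproduce $\sum p_{ed}(n)q^n$. This amounts to the telescoping identity
$$\sum_{n\ge0}\frac{q^{2n^2+n}-q^{2n^2+3n+1}}{(-q^2;q^2)_n}+\sum_{n\ge1}\frac{q^{2n^2-n}}{(-q^2;q^2)_n}=1.$$
It follows by writing $q^{2(n+1)^2-(n+1)}/(-q^2;q^2)_{n+1}\cdot(1+q^{2n+2})=q^{2n^2+3n+1}/(-q^2;q^2)_n$ and pairing terms. I would verify this, together with the table values for $n=6$, to confirm the indexing.
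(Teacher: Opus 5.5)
Your argument is correct and is essentially the paper's own proof: fix $\text{mex}(\lambda)=M$, force each of $1,\dots,M-1$ to appear (even ones exactly once, odd ones at least once), leave the parts above $M$ free subject to distinct even parts, and sum over $M$ of each parity. Your correction to the second identity is also justified. The paper's proof likewise sums from $n=0$, but that term corresponds to the impossible case $\text{mex}(\lambda)=0$ and contributes the full generating function $\frac{(-q^2;q^2)_\infty}{(q;q^2)_\infty}$, so the sum must start at $n=1$; your check $a^{e}_{ed}(1)=1$ confirms this, and so does $a^{e}_{ed}(6)=5$, which would otherwise come out as $14$.
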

Finally, for $\l\in \mathcal{P}_{ed}(n)$, define $moex_{ed}(\l)$ (resp. $meex_{ed}(\l)$) is the smallest positive odd (resp. even) integer that is not a part of $\l$, defined by
\begin{equation}\label{eveqn2}
\s_{ed}\text{moex}(n):=\underset{\l\in \mathcal{P}_{ed}(n)}{\sum}moex_{ed}(\l)\ \ \text{and}\ \ \s_{ed}\text{meex}(n):=\underset{\l\in \mathcal{P}_{ed}(n)}{\sum}meex_{ed}(\l).
\end{equation}
To state our next theorem, recall Ramanujan's $\s(q)$ function (see \cite[Equation (1.1)]{ADH})
\begin{equation}\label{sigma}
\sigma(q):=\sum_{n=0}^{\infty}\frac{q^{\frac{n(n+1)}{2}}}{\left(-q;q\right)_n}.
\end{equation}
\begin{theorem}\label{evthm2}
We have
$$\sum_{n=0}^{\infty}\s_{ed}\textnormal{moex}(n)q^n=\frac{\left(q^2;q^2\right)^2_{\infty}\left(q\right)^3_{\infty}}{\left(q^4;q^4\right)_{\infty}}\ \ \text{and}\ \ \sum_{n=0}^{\infty}\s_{ed}\textnormal{meex}(n)q^n=2\frac{\left(-q^2;q^2\right)_{\infty}}{\left(q;q^2\right)_{\infty}}\s\left(q^2\right).$$	
\end{theorem}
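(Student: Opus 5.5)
The plan is to use the standard ``excludant as a sum of indicators'' decomposition for each statistic. Each indicator condition forces a block of small parts, and this turns the generating function into $\frac{(-q^2;q^2)_\infty}{(q;q^2)_\infty}$ times a simple $q$-series. Throughout, a partition in $\mathcal{P}_{ed}(n)$ has even parts distinct and odd parts unrestricted. Its generating function is $F(q):=\frac{(-q^2;q^2)_\infty}{(q;q^2)_\infty}$.

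\textbf{The meex statistic.} For $\l\in\mathcal{P}_{ed}(n)$ we have $meex_{ed}(\l)=2k+2$ exactly when $2,4,\dots,2k$ are parts and $2k+2$ is not. Hence
$$meex_{ed}(\l)=2+2\,\#\{k\ge 1:\ 2,4,\dots,2k \text{ are all parts of } \l\}.$$
Summing over $\l$, we need the generating function of partitions in $\mathcal{P}_{ed}$ containing $2,4,\dots,2k$. Since even parts are distinct, each of these appears exactly once. Such a partition therefore splits as $q^{2+4+\dots+2k}=q^{k(k+1)}$ times an arbitrary partition into distinct even parts $>2k$ and unrestricted odd parts. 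This gives $q^{k(k+1)}\frac{(-q^{2k+2};q^2)_\infty}{(q;q^2)_\infty}=F(q)\frac{q^{k(k+1)}}{(-q^2;q^2)_k}$. Summing over $k\ge 0$ (the $k=0$ term accounts for the constant $2$) yields $2F(q)\sum_{k\ge0}\frac{q^{k(k+1)}}{(-q^2;q^2)_k}$. By \eqref{sigma}, this is exactly $2F(q)\s(q^2)$.

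\textbf{The moex statistic.} Similarly, $moex_{ed}(\l)=1+2\,\#\{k\ge1: 1,3,\dots,2k-1\text{ are all parts of }\l\}$. Odd parts are unrestricted, so forcing $1,3,\dots,2k-1$ to appear at least once just contributes a factor $q^{1+3+\dots+(2k-1)}=q^{k^2}$ times the full generating function. This gives
$$\sum_{n\ge0}\s_{ed}\text{moex}(n)q^n=F(q)\Bigl(1+2\sum_{k\ge1}q^{k^2}\Bigr)=F(q)\sum_{k\in\mathbb{Z}}q^{k^2}.$$
Next I apply the Jacobi triple product $\sum_{k\in\mathbb Z}q^{k^2}=(q^2;q^2)_\infty(-q;q^2)_\infty^2$. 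I then simplify using $(-q;q^2)_\infty(-q^2;q^2)_\infty=(-q;q)_\infty=\frac{(q^2;q^2)_\infty}{(q;q)_\infty}$, together with $(-q;q^2)_\infty=\frac{(q^2;q^4)_\infty}{(q;q^2)_\infty}$ and $(q;q^2)_\infty=\frac{(q;q)_\infty}{(q^2;q^2)_\infty}$, to reach a pure eta-quotient.

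\textbf{Expected main obstacle.} The combinatorial reductions are routine. The delicate step is the final bookkeeping of the product for moex, i.e.\ matching the eta-quotient obtained from $F(q)\,\theta_3(q)$ with the specific form displayed in the theorem. I will carefully express everything in terms of $(q;q)_\infty,(q^2;q^2)_\infty,(q^4;q^4)_\infty$ and compare exponents. If the exponents do not match the displayed form, I will recheck the displayed expression against the first few coefficients, e.g.\ $\s_{ed}\text{moex}(n)$ for $n\le 3$ by direct enumeration.
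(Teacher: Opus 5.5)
Your combinatorial reductions are correct, and your meex argument is complete. Writing $meex_{ed}(\lambda)=2+2\#\{k\ge1:2,4,\dots,2k\in\lambda\}$ gives $2F(q)\sum_{k\ge0}q^{k(k+1)}/(-q^2;q^2)_k=2F(q)\s(q^2)$ directly. This is the same tail-sum device the paper uses in its alternative proof of Theorem~\ref{thm_sigma}. For this theorem the paper instead enumerates by the exact value $meex_{ed}=2n$, and then needs a telescoping step using $1/(-q^2;q^2)_n=\bigl(1-\frac{q^{2n}}{1+q^{2n}}\bigr)/(-q^2;q^2)_{n-1}$. For moex, your decomposition likewise reaches $F(q)\sum_{k\in\mathbb{Z}}q^{k^2}$ without the paper's factor $1-q^{2n+1}$ and its telescoping. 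The two routes are equivalent; yours is shorter.

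The gap is the final step you flagged and left open: it cannot be closed, because the displayed product in the statement is wrong. Carrying out your own bookkeeping gives
\begin{equation*}
F(q)=\frac{(q^4;q^4)_\infty}{(q;q)_\infty},\qquad \sum_{k\in\mathbb{Z}}q^{k^2}=(q^2;q^2)_\infty(-q;q^2)_\infty^2=\frac{(q^2;q^2)_\infty^5}{(q;q)_\infty^2(q^4;q^4)_\infty^2},
\end{equation*}
so
\begin{equation*}
\sum_{n\ge0}\s_{ed}\textnormal{moex}(n)q^n=\frac{(q^2;q^2)_\infty^5}{(q;q)_\infty^3(q^4;q^4)_\infty}=1+3q+4q^2+7q^3+\cdots .
\end{equation*}
This agrees with direct enumeration over $\emptyset$; $(1)$; $(2),(1,1)$; $(3),(2,1),(1,1,1)$. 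By contrast, the displayed $\frac{(q^2;q^2)^2_\infty(q;q)^3_\infty}{(q^4;q^4)_\infty}$ expands as $1-3q+\cdots$, since $(q;q)_\infty^3=1-3q+\cdots$ and the other factors are $1+O(q^2)$. It therefore has a negative coefficient and cannot be the generating function of $\s_{ed}\textnormal{moex}(n)$; indeed $\s_{ed}\textnormal{moex}(1)=3$.

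The paper's own proof also only reaches $\frac{(-q^2;q^2)_\infty(q^2;q^2)_\infty(-q;q^2)^2_\infty}{(q;q^2)_\infty}$, and its closing ``simplifying further'' is where the error enters. So do not try to force a match. Finish by proving the corrected eta-quotient above, and state explicitly that the first identity as printed is false. The second identity stands as you proved it.
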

We organize the remaining part of the paper as follows. Section \ref{sec1} presents preliminary results necessary for the later sections. In Section \ref{sec2}, we prove Theorems \ref{thm_aod}--\ref{asymp2}. Next, we present proofs of Theorems \ref{thm_sigma}--\ref{thm_meex-4e} in Section \ref{sec3}. Proof of Theorems \ref{thm_abarod} and \ref{thm_sigmabar} are given in Section \ref{sec4}. In Section \ref{sec5}, we prove Theorems \ref{evthm1} and \ref{evthm2}. Finally, we conclude this paper by stating a few problems in Section \ref{con}.

\section{Preliminary lemmas}\label{sec1}
In this section, we state $q$-series transformations and Tauberian theorem which will be useful to prove our main results.

\noindent Due to Cauchy \cite[Theorem 2.1]{A} (also known as $q$-Binomial theorem), we have the following.
\begin{lemma}\label{Cauchy}
	If $|q|<1$, $|t|<1$, then
	\begin{equation*}
	\sum_{n=0}^{\infty}\frac{(aq;q)_n t^n}{(q;q)_n}=\frac{(at;q)_{\infty}}{(t;q)_{\infty}}.
	\end{equation*}
\end{lemma}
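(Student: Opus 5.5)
Before proving anything I need to pin down what the symbols mean, because the identity is sensitive to the indexing of the $q$-Pochhammer symbol. Read literally with the paper's definition $(x;q)_n=\prod_{k=1}^{n}(1-xq^k)$, the statement fails. At $a=0$ the coefficient of $t$ on the left is $1/(1-q^2)$, while on the right it is $q/(1-q)$. So I will read the lemma as Cauchy's theorem in the form cited from \cite{A}. There the numerator $(aq;q)_n$ stands for the product $(1-a)(1-aq)\cdots(1-aq^{n-1})$, that is, the factors $1-aq^{k-1}$ for $k=1,\dots,n$. Likewise $(q;q)_n=\prod_{k=1}^n(1-q^k)$, $(at;q)_\infty=\prod_{k\ge0}(1-atq^k)$ and $(t;q)_\infty=\prod_{k\ge 0}(1-tq^k)$. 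I will state this convention explicitly at the start of the proof and then prove
\[
F(t):=\sum_{n\ge0}\frac{\prod_{k=0}^{n-1}(1-aq^k)}{\prod_{k=1}^{n}(1-q^k)}\,t^n=\prod_{k\ge0}\frac{1-atq^k}{1-tq^k}.
\]

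The plan is the standard functional-equation argument. First, for $|q|<1$ and $|t|<1$ the series converges absolutely, since the ratio of consecutive coefficients tends to $t$. Hence $F$ is analytic in $|t|<1$ with $F(0)=1$. Next, write $A_n$ for the coefficient of $t^n$ and use the relation $(1-q^{n})A_n=(1-aq^{n-1})A_{n-1}$. Comparing coefficients of $t^n$ then gives
\[
F(t)-F(tq)=t\bigl(F(t)-aF(tq)\bigr), \qquad\text{i.e.}\qquad F(t)=\frac{1-at}{1-t}\,F(tq).
\]
Iterating this $N$ times gives $F(t)=\prod_{k=0}^{N-1}\frac{1-atq^k}{1-tq^k}\,F(tq^N)$. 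Finally, let $N\to\infty$: we have $F(tq^N)\to F(0)=1$ by continuity at $0$, and the finite products converge to the infinite ones.

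The only step needing care is the coefficient comparison behind the functional equation. I will check it once at the level of $t^n$. The left side contributes $A_n(1-q^n)$ and the right side contributes $A_{n-1}(1-aq^{n-1})$. These agree by the definition of $A_n$, with the $n=0$ case trivial. The limit step is routine because the infinite products converge absolutely for $|q|<1$.
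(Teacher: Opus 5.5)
Your proof is correct. The paper gives no argument of its own and only cites Andrews' Theorem 2.1, whose proof is the same $t\mapsto tq$ functional equation: Andrews runs it from the product to its Taylor coefficients, and you run it from the series back to the product. Your diagnosis of the misprint is also right, since the paper's later uses, e.g.\ $\sum_{k\ge0}\frac{(-q;q^2)_k}{(q^2;q^2)_k}x^k=\frac{(-qx;q^2)_\infty}{(x;q^2)_\infty}$, are exactly the standard form you prove. One point of framing: no single convention for $(x;q)_n$ gives both $(aq;q)_n=\prod_{k=0}^{n-1}(1-aq^k)$ and $(q;q)_n=\prod_{k=1}^{n}(1-q^k)$, so it is cleaner to say the intended numerator is the standard $(a;q)_n$ than to give the printed symbol a special meaning.
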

From \cite[Equations (2.2.12) and (2.2.13)]{A}, here we record two identities of Gauss.
\begin{lemma}
We have
\begin{align}\label{Gauss1}
\sum_{n\in \mathbb{Z}}(-1)^n q^{n^2}&=\frac{\left(q;q\right)_{\infty}}{\left(-q;q\right)_{\infty}},\\\label{Gauss}
\sum_{n=0}^{\infty}q^{\frac{n(n+1)}{2}}&=\frac{\left(q^2;q^2\right)_{\infty}}{\left(q;q^2\right)_{\infty}}.	
\end{align}
\end{lemma}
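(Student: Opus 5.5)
These two identities are classical consequences of Jacobi's triple product. We would derive them from it rather than from Lemma \ref{Cauchy}, though the $q$-binomial theorem can serve as an alternative route.

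\emph{First identity.} Recall the triple product
$$\sum_{n\in\mathbb{Z}} z^n q^{n^2}=(q^2;q^2)_\infty(-zq;q^2)_\infty(-q/z;q^2)_\infty.$$
Putting $z=-1$ gives $(q^2;q^2)_\infty(q;q^2)_\infty^2$. We then simplify using two facts:
\begin{itemize}
\item $(q;q)_\infty=(q;q^2)_\infty(q^2;q^2)_\infty$;
\item Euler's identity $(-q;q)_\infty=1/(q;q^2)_\infty$.
\end{itemize}
These give
$$(q^2;q^2)_\infty(q;q^2)_\infty^2=(q;q)_\infty(q;q^2)_\infty=\frac{(q;q)_\infty}{(-q;q)_\infty},$$
which is \eqref{Gauss1}.

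\emph{Second identity.} We apply the triple product with $q\mapsto q^{1/2}$ and $z=q^{1/2}$. This gives
$$\sum_{n\in\mathbb{Z}} q^{(n^2+n)/2}=(q;q)_\infty(-q;q)_\infty(-1;q)_\infty.$$
On the left, the map $n\mapsto -n-1$ pairs the terms, so the sum equals $2\sum_{n\ge0}q^{n(n+1)/2}$. On the right, $(-1;q)_\infty$ is understood as the full product including the $k=0$ factor, so it equals $2(-q;q)_\infty$. (With the paper's convention starting at $k=1$, one should simply write the factor $2(-q;q)_\infty$ directly.) Cancelling the 2, we get
$$\sum_{n\ge0}q^{n(n+1)/2}=(q;q)_\infty(-q;q)_\infty^2=(q^2;q^2)_\infty(-q;q)_\infty=\frac{(q^2;q^2)_\infty}{(q;q^2)_\infty},$$
again by Euler's identity. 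This is \eqref{Gauss}.

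\emph{Main obstacle.} There is no real difficulty here. The only points needing care are the indexing convention of the $q$-Pochhammer symbol used in this paper (products starting at $k=1$) and the factor of 2 in the second identity. Alternatively, since the paper only records these identities from \cite{A}, one may simply cite them.
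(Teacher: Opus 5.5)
Your argument is correct and is essentially the proof behind the paper's citation: the paper gives no derivation of its own and only refers to Andrews' (2.2.12)--(2.2.13), which are obtained there from the Jacobi triple product by exactly your specializations, namely $z=-1$, and $q\mapsto q^{1/2}$ with $z=q^{1/2}$. Your caveat about indexing is unnecessary, because the paper's displayed definition $(a;q)_n=\prod_{k=1}^{n}(1-aq^k)$ is a typo for the standard $\prod_{k=0}^{n-1}(1-aq^k)$ (under the literal definition the lemma is false, e.g.\ the coefficients of $q$ on the two sides disagree), and under the standard convention $(-1;q)_\infty=2(-q;q)_\infty$ holds exactly as you wrote.
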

Next, we state Heine's transformation formula \cite[page 359, (III.1)]{Heine}. To state it, define
$${}_{2}\phi_{1}\left(\begin{matrix}
a,&b\\&c
\end{matrix}\,;q,z\right):=\sum_{n=0}^{\infty}\frac{\left(a\right)_n\left(b\right)_n z^n}{\left(q\right)_n\left(c\right)_n}.$$
\begin{lemma}\label{hypg_tran_formula}
\begin{equation*}
{}_{2}\phi_{1}\left(\begin{matrix}
a,&b\\&c
\end{matrix}\,;q,z\right) =\frac{(b)_{\infty}(az)_{\infty}}{(c)_{\infty}(z)_{\infty}} {}_{2}\phi_{1}\left(\begin{matrix}
\frac cb,&z\\&az
\end{matrix}\,;q,b\right).
\end{equation*}	
\end{lemma}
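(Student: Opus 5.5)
We prove Heine's transformation by the classical route: expand one ratio of infinite products using the $q$-binomial theorem (Lemma \ref{Cauchy}), interchange the order of summation, and apply Lemma \ref{Cauchy} once more. Throughout we use the standard convention $(a)_n=(a;q)_n=\prod_{k=0}^{n-1}(1-aq^k)$. This is the convention under which Lemma \ref{Cauchy} takes the familiar form $\sum_{n\ge0}\frac{(a)_n}{(q)_n}t^n=\frac{(at)_\infty}{(t)_\infty}$ for $|t|<1$. We assume $|q|<1$, $|z|<1$, $|b|<1$, and that no denominator vanishes. The identity then extends by analytic continuation.

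\textbf{Step 1.} Write
\[
\frac{(b)_n}{(c)_n}=\frac{(b)_\infty}{(c)_\infty}\cdot\frac{(cq^n)_\infty}{(bq^n)_\infty}.
\]
Apply Lemma \ref{Cauchy} with $a\mapsto c/b$ and $t\mapsto bq^n$. Since $|bq^n|<1$, this gives
\[
\frac{(cq^n)_\infty}{(bq^n)_\infty}=\sum_{m\ge0}\frac{(c/b)_m}{(q)_m}\,b^m q^{nm}.
\]

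\textbf{Step 2.} Substitute this into the definition of ${}_2\phi_1(a,b;c;q,z)$. The resulting double series converges absolutely for $|z|,|b|<1$, so we may interchange the sums over $n$ and $m$. This yields
\[
\frac{(b)_\infty}{(c)_\infty}\sum_{m\ge0}\frac{(c/b)_m b^m}{(q)_m}\sum_{n\ge0}\frac{(a)_n}{(q)_n}(zq^m)^n .
\]

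\textbf{Step 3.} Evaluate the inner sum by Lemma \ref{Cauchy} with $t=zq^m$:
\[
\sum_{n\ge0}\frac{(a)_n}{(q)_n}(zq^m)^n=\frac{(azq^m)_\infty}{(zq^m)_\infty}=\frac{(az)_\infty}{(z)_\infty}\cdot\frac{(z)_m}{(az)_m}.
\]
Pulling the factor $\frac{(az)_\infty}{(z)_\infty}$ outside the sum over $m$ leaves
\[
\frac{(b)_\infty(az)_\infty}{(c)_\infty(z)_\infty}\sum_{m\ge0}\frac{(c/b)_m(z)_m}{(q)_m(az)_m}b^m,
\]
which is exactly the right-hand side of Lemma \ref{hypg_tran_formula}.

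\textbf{Main obstacle.} The only delicate point is justifying the interchange of summation in Step 2. This follows from absolute convergence: the terms $(c/b)_m/(q)_m$ and $(a)_n/(q)_n$ are uniformly bounded, and the double series is dominated by the product of two geometric series in $|b|$ and $|z|$. The extension beyond the region $|b|<1$, $|z|<1$ is then immediate by analytic continuation in the parameters.
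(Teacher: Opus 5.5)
Your proof is correct. The paper does not prove this lemma at all; it simply quotes Heine's transformation from the literature as (III.1). Your argument is the classical derivation found there: write $\frac{(b)_n}{(c)_n}=\frac{(b)_\infty}{(c)_\infty}\frac{(cq^n)_\infty}{(bq^n)_\infty}$, expand the last ratio by the $q$-binomial theorem, interchange the sums (justified, as you say, by absolute convergence for $|z|<1$, $|b|<1$), and resum the inner series.

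One bookkeeping caveat. You use the $q$-binomial theorem in its correct form $\sum_{n\ge0}\frac{(a)_n}{(q)_n}t^n=\frac{(at)_\infty}{(t)_\infty}$, but Lemma \ref{Cauchy} as printed has $(aq;q)_n$ in the numerator, and the paper's definition of $(a;q)_n$ starts the product at $k=1$. Both are misprints; the paper's own applications, e.g.\ in the proof of Theorem \ref{thm_abarod}, use the standard form. So cite the standard statement rather than claim that Lemma \ref{Cauchy} literally reads that way.
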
 
Due to Andrews' \cite[Theorem 1]{Andrews}, we have the following:
\begin{theorem}\label{APT}
We have
\begin{align*}
\sum_{n=0}^{\infty}\frac{\left(B;q\right)_{n}\left(-Abq;q\right)_nq^n}{\left(-aq;q\right)_n\left(-bq;q\right)_n}&=-\frac{a^{-1}\left(B;q\right)_{\infty}\left(-Abq;q\right)_{\infty}}{\left(-aq;q\right)_{\infty}\left(-bq;q\right)_{\infty}}\sum_{m=0}^{\infty}\frac{\left(A^{-1};q\right)_m\left(\frac{Abq}{a}\right)^m}{\left(-\frac{B}{a};q\right)_{m+1}}\\
&\hspace{1 cm}+\left(1+b\right)\sum_{m=0}^{\infty}\frac{\left(-a^{-1};q\right)_{m+1}\left(-\frac{ABq}{a}\right)_m\left(-b\right)^m}{\left(-\frac{B}{a};q\right)_{m+1}\left(\frac{Abq}{a};q\right)_{m+1}}.
\end{align*}
\end{theorem}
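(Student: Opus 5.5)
The plan is to treat the left-hand side as a nonterminating ${}_3\phi_2$ series with argument $q$,
$$
L:={}_3\phi_2\!\left(\begin{matrix} B,\ -Abq,\ q\\ -aq,\ -bq\end{matrix};q,q\right),
$$
where the numerator parameter $q$ cancels the $(q;q)_n$ in the ${}_3\phi_2$ normalization, so that the general term is exactly $\frac{(B)_n(-Abq)_n q^n}{(-aq)_n(-bq)_n}$. The aim is to reduce $L$ to ${}_2\phi_1$ series that can be handled by Lemma \ref{hypg_tran_formula} and Lemma \ref{Cauchy}. I would first work in a region of parameters with $|q|<1$, $|b|<1$ and $|Abq/a|<1$, where every series below converges absolutely. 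The final identity then extends by analytic continuation in the parameters, since both sides are meromorphic there.

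\textbf{Step 1 (separating $b$).} Expand the ratio $(-Abq;q)_n/(-bq;q)_n$ as a finite sum in powers of $b$, using the terminating $q$-Chu--Vandermonde identity, or equivalently the finite partial-fraction decomposition in $b$ of $1/(-bq;q)_n$. Substituting this into $L$ and interchanging the order of summation gives an outer sum over $m$ whose coefficients carry $(-b)^m$. Each inner sum over $n$ is a ${}_2\phi_1$ of the form
$$
\sum_{n}\frac{(B)_n(q^{m+1})_n}{(q)_n(-aq)_n}\,q^{n}\cdot(\text{shift}).
$$

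\textbf{Step 2 (transforming the inner sums).} Apply Heine's transformation, Lemma \ref{hypg_tran_formula}, to each inner ${}_2\phi_1$. The natural choice is $b\mapsto B$, $c\mapsto -aq$. This should produce the prefactor $(B)_\infty/(-aq)_\infty$ and a new ${}_2\phi_1$ in the variable $-aq/B$. The roles of $a$ and $B$ then swap, which is what generates the denominators $(-B/a;q)_{m+1}$ visible on the right of Theorem \ref{APT}.

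\textbf{Step 3 (splitting into the two terms).} The transformed series has two parts.
\begin{itemize}
\item The \emph{nonterminating} part collapses, after resumming over $m$ with Cauchy's $q$-binomial theorem (Lemma \ref{Cauchy}), to the infinite-product term $-a^{-1}(B)_\infty(-Abq)_\infty/\bigl((-aq)_\infty(-bq)_\infty\bigr)$. This is multiplied by $\sum_m (A^{-1})_m (Abq/a)^m/(-B/a)_{m+1}$, and the factor $(A^{-1})_m$ arises from the $q$-binomial coefficient structure of $(-Abq)_n$.
\item The \emph{boundary} terms, coming from the finite range of the Chu--Vandermonde expansion, are what remain. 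They assemble into the second sum, containing $(1+b)$, $(-a^{-1})_{m+1}$ and $(Abq/a;q)_{m+1}$.
\end{itemize}

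As a consistency check, I would verify the identity at $b=0$, where the second sum reduces to its $m=0$ term and the claim becomes a Fine-type two-term relation. I would also check it at $A=1$, where $(A^{-1})_m$ kills every term with $m\ge1$.

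The main obstacle I expect is Step 3. Correctly identifying and resumming the leftover terms into the precise closed form of the second sum is delicate, and so is justifying the interchange of summation orders. If the bookkeeping becomes unwieldy, I would fall back on a partial-fraction argument in $a$. Both sides are meromorphic in $a$, so I would match their residues at the poles $a=-q^{-j}$, which come from $(-aq)_\infty$. I would then compare the two sides as $a\to\infty$, checking that the apparent poles at $a=-Bq^k$ and $a=Abq^{k+1}$ cancel between the two terms on the right, and conclude by Liouville's theorem.
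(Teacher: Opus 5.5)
\textbf{There is a genuine gap: the step that actually produces the right-hand side is never supplied.} The paper itself gives no proof of this statement; it quotes it from Andrews (his Theorem~1). So your outline has to stand on its own, and the mechanisms you sketch for the key step do not work.

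Step~1 cannot be carried out as stated. For generic $A$, the ratio $(-Abq;q)_n/(-bq;q)_n$ is a quotient of two degree-$n$ polynomials in $b$, not a finite sum of powers of $b$. What $q$-Chu--Vandermonde actually gives is
\begin{equation*}
\frac{(-Abq;q)_n}{(-bq;q)_n}=\sum_{k=0}^{n}\frac{(A^{-1};q)_k\,(Ab)^k\,q^{\binom{k+1}{2}}}{(q;q)_k\,(-bq;q)_k}\cdot\frac{(q;q)_n}{(q;q)_{n-k}},
\end{equation*}
and after interchanging the sums
\begin{equation*}
L=\sum_{k\ge 0}\frac{(A^{-1};q)_k\,(B;q)_k\,(Ab)^k\,q^{\binom{k+1}{2}+k}}{(-aq;q)_k\,(-bq;q)_k}\;{}_2\phi_1\!\left(\begin{matrix}q^{k+1},&Bq^k\\&-aq^{k+1}\end{matrix}\,;q,q\right).
\end{equation*}
Three consequences follow.
\begin{enumerate}
\item The coefficients carry $1/(-bq;q)_k$, not $(-b)^m$.
\item Since $k$ now runs over all of $\mathbb{N}_0$, there are no ``boundary terms'' left over for Step~3.
\item Lemma~\ref{hypg_tran_formula} with $(a,b,c,z)\mapsto(q^{k+1},Bq^k,-aq^{k+1},q)$ does produce the prefactor $(B;q)_\infty/(-aq;q)_\infty$. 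But the new inner series is $\sum_{j}\prod_{i=1}^{j}(B+aq^i)\,q^{kj}/(q^{k+2};q)_j$, whose terms are polynomials in $a$ and $B$. The parameter $-aq/B$ appears only in a numerator, and nothing ``swaps'' into a denominator $(-B/a;q)_{m+1}$.
\end{enumerate}
By contrast, every piece of the right side of Theorem~\ref{APT} is a series in $1/a$: the prefactor $a^{-1}$, the powers $(Abq/a)^m$, and the denominators $(-B/a;q)_{m+1}$ and $(Abq/a;q)_{m+1}$. Converting one kind of expansion into the other is the whole content of the theorem, and Step~3 merely asserts its outcome. It may help to notice that the second sum equals
\begin{equation*}
\frac{(1+b)(1+a^{-1})}{(1+B/a)(1-Abq/a)}\;{}_3\phi_2\!\left(\begin{matrix}q,\ -q/a,\ -ABq/a\\ -Bq/a,\ Abq^2/a\end{matrix};q,-b\right),
\end{equation*}
where $-b$ is exactly the product of the denominator parameters divided by the product of the numerator parameters. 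So the identity is a genuine three-term nonterminating ${}_3\phi_2$ relation. An ingredient of that strength (or Andrews' own argument) must enter somewhere, and your outline never says where.

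Your fallback has the same hole. The residue comparison at $a=-q^{-j}$ is in fact fine. The second sum has no poles there, and matching the left side against the first term reduces to
\begin{equation*}
{}_2\phi_1\!\left(\begin{matrix}\alpha,&\beta\\&\gamma\end{matrix}\,;q,q\right)=\frac{(\alpha;q)_\infty(\beta;q)_\infty}{(q;q)_\infty(\gamma;q)_\infty}\sum_{m\ge 0}\frac{(\gamma/\alpha;q)_m\,\alpha^m}{(\beta;q)_{m+1}},\qquad \alpha=-Abq^{j+1},\ \beta=Bq^{j},\ \gamma=-bq^{j+1},
\end{equation*}
which is Lemma~\ref{hypg_tran_formula} with $z=q$. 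But that only shows the left side minus the first term has no poles at $-q^{-j}$; it says nothing about why the remainder is the second sum. For that you must show that the residues at $a=-Bq^k$ and $a=Abq^{k+1}$ cancel. The latter poles exist only for an analytic continuation of the first sum that you have not constructed, since that sum converges only for $|a|>|Abq|$.

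Both of these pole families accumulate at $a=0$, while the poles at $-q^{-j}$ accumulate at $a=\infty$. Hence neither side is meromorphic on the Riemann sphere, and Liouville's theorem does not apply as such. Even if every residue cancels, you are left with a function holomorphic on $\mathbb{C}\setminus\{0\}$. Near $\infty$ you can control it on the circles $|a|=|q|^{-j-1/2}$ between the poles. You also need a bound near $a=0$, which is exactly where your right-hand side is not given by convergent series, and the proposal offers no way to get one.
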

\noindent Next, we state Ingham's \cite{Ingham} Tauberain theorem.
\begin{proposition}\label{ingham}
	Let $A(q)=\sum_{n=0}^{\infty}a(n)q^n$ be a power series with radius of convergence 1. Assume that $\{a(n)\}$ is a weakly increasing sequence of non-negative real numbers. If there are constants $\alpha, \beta \in \mathbb{R}$, and $C>0$ such that \begin{equation*}
	A(e^{-t})\sim \alpha t^{\beta}e^{\frac{C}{t}}\ \ \text{as} \ \ t\rightarrow 0^{+},
	\end{equation*}
	then 
	\begin{equation*}
	a(n)\sim \frac{\alpha}{2\sqrt{\pi}}\frac{C^{\frac{2\beta+1}{4}}e^{2\sqrt{Cn}}}{n^{\frac{2\beta+3}{4}}}\ \ \text{as} \ \ n\rightarrow \infty.
	\end{equation*}
\end{proposition}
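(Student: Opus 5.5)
This is Ingham's theorem, and we plan to prove it by a real-variable Tauberian argument in three stages. Write $A(e^{-t})=\sum_{m}a(m)e^{-mt}$ and view the weights $w_t(m):=a(m)e^{-mt}/A(e^{-t})$ as a probability distribution on $\mathbb{N}_0$. The heuristic is a saddle point. With $\log A(e^{-t})\approx \frac{C}{t}+\beta\log t$, the distribution $w_t$ should be concentrated near $m\approx C/t^2$ with spread of order $t^{-3/2}$, since $\frac{d^2}{dt^2}\frac{C}{t}=\frac{2C}{t^3}$. Choosing $t=\sqrt{C/n}$ puts the center at $n$. A Gaussian local heuristic $a(n)e^{-nt}\approx A(e^{-t})/\sqrt{2\pi\cdot 2C t^{-3}}$ then gives exactly the stated constant $\frac{\alpha}{2\sqrt\pi}C^{\frac{2\beta+1}{4}}n^{-\frac{2\beta+3}{4}}e^{2\sqrt{Cn}}$. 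The whole proof consists of justifying this using only real $t$ and monotonicity.

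\emph{Step 1 (crude bounds).} Monotonicity gives $a(n)\sum_{m\ge n}e^{-mt}\le A(e^{-t})$, so $a(n)\le (1-e^{-t})e^{nt}A(e^{-t})$ for every $t>0$. Taking $t=\sqrt{C/n}$ yields $\log a(n)\le 2\sqrt{Cn}+O(\log n)$. A matching lower bound $\log a(n)\ge 2\sqrt{Cn}-o(\sqrt n)$ follows by splitting $A(e^{-t})$ into $m\le n$, which is bounded by $(n+1)a(n)$, and $m>n$. The tail $m>n$ is negligible when $t$ is chosen slightly larger than $\sqrt{C/n}$, using the upper bound just obtained. This gives $\log a(n)\sim 2\sqrt{Cn}$.

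\emph{Step 2 (concentration and Gaussian shape).} Using the hypothesis at the points $t$ and $t+s t^{3/2}$, we get
\[
\frac{A(e^{-t-st^{3/2}})}{A(e^{-t})}\to e^{-s\mu_t t^{3/2}+\frac{s^2}{2}\cdot 2C}\cdot(\text{correction}\to1),\qquad \mu_t=C/t^2 .
\]
Hence the Laplace transform of the normalized variable $(m-\mu_t)t^{3/2}$ under $w_t$ converges, for every real $s$ in a neighbourhood of $0$, to that of a normal law with variance $2C$. By the continuity theorem for moment generating functions, $w_t$ converges in distribution to this Gaussian. In particular, for fixed $c$ and $\delta>0$, the mass of $w_t$ in the window $[\mu_t+ct^{-3/2},\mu_t+(c+\delta)t^{-3/2}]$ tends to the corresponding Gaussian probability.

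\emph{Step 3 (from windows to points via monotonicity).} This is the main obstacle. Window masses on scale $t^{-3/2}$ must be converted into pointwise values, while $a(m)$ itself grows by a factor of about $e^{t\cdot t^{-3/2}}$ across such a window. The device is to use monotonicity of $a$ only on the much shorter scale $h=\varepsilon t^{-1}$. There, $b(m)=a(m)e^{-mt}$ satisfies $b(m+k)\ge e^{-\varepsilon}b(m)$ for $0\le k\le h$, so $b$ is almost non-decreasing on such blocks.

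To exploit this, take $t=\sqrt{C/n}$ and compare $a(n)$ with the averages of $a$ over $[n-h,n]$ and $[n,n+h]$. We then express these block sums by combining the distributional statement of Step 2 at the slightly shifted parameters $t'=t(1\pm\eta)$, where $\eta$ is chosen so that $\mu_{t'}$ sits at the block. The smoothing of the resulting window estimates to length $h$ is carried out by exploiting the exact factor $e^{-mt'}$, which varies by only $e^{O(\varepsilon)}$ over the block.

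If this direct route proves too lossy, the fallback is Karamata's method. One approximates the indicator of a short block, multiplied by $e^{mt}$, from above and below by polynomials in $e^{-mt}$. The asymptotics of $A(e^{-kt})$ for each fixed integer $k$ are available from the hypothesis, so these polynomials can be evaluated. The leftover error is then absorbed using Step 1 and monotonicity.

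Finally, letting $\varepsilon,\eta\to0$ in the sandwich yields the two-sided estimate, and evaluating the Gaussian density at $0$ with variance $2C t^{-3}$ produces the constant $\frac{\alpha}{2\sqrt\pi}C^{\frac{2\beta+1}{4}}$.
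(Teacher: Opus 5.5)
Step 2 is correct. The real-variable hypothesis gives a central limit theorem for $w_t$ at scale $t^{-3/2}$, and up to factors $1+o(1)$ that is all it gives. Step 3 is a genuine gap that cannot be closed: the proposition as stated, with the asymptotic assumed only for real $t\to0^+$, is false. Monotonicity controls $b(m)=a(m)e^{-mt}$ only on blocks of length $\lesssim t^{-1}$, while the hypothesis controls only windows of length $\gg t^{-3/2}$. Fluctuations of $a$ on the intermediate scales are invisible to every $A(e^{-t'})$.

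Here is a counterexample. Let $f(n)$ be the claimed main term, and set $a(n)=f(n)\bigl(1+\tfrac12\cos(2\pi n^{2/5})\bigr)$ for $n\ge n_0$ and $a(n)=0$ for $n<n_0$.
\begin{itemize}
\item Monotonicity: $\log f(n+1)-\log f(n)=\sqrt{C/n}+O(1/n)$, while the logarithm of the cosine factor moves by only $O(n^{-3/5})$ per step. So $a$ is non-decreasing once $n_0$ is large.
\item Main term: put $g_t(m)=f(m)e^{-mt}$. The usual saddle-point computation gives $\sum_m g_t(m)\sim\alpha t^{\beta}e^{C/t}$ and $\sum_m g_t(m)\asymp t^{-3/2}\max_m g_t(m)$. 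The mass lies in $|m-C/t^2|\le Kt^{-3/2}$, where $g_t$ is unimodal.
\item Oscillating term: on that window $\frac{d}{dm}m^{2/5}\asymp t^{6/5}$, so by Kusmin--Landau the partial sums of $e^{2\pi i m^{2/5}}$ are $O(t^{-6/5})$. Partial summation then gives
\begin{equation*}
\Bigl|\sum_m g_t(m)\cos\bigl(2\pi m^{2/5}\bigr)\Bigr|\ll t^{-6/5}\max_m g_t(m)+o\Bigl(\sum_m g_t(m)\Bigr)=o\Bigl(\sum_m g_t(m)\Bigr).
\end{equation*}
\end{itemize}
Hence $A(e^{-t})\sim\alpha t^{\beta}e^{C/t}$ and every hypothesis holds, yet $a(n)/f(n)$ oscillates between $\tfrac12$ and $\tfrac32$.

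The oscillation period $\asymp t^{-6/5}$ sits exactly between your block length $h=\varepsilon t^{-1}$ and the Gaussian scale $t^{-3/2}$. Step 2 at any shifted $t'$ determines window masses only up to $o(1)$, whereas an $h$-block carries mass only $\asymp\varepsilon t^{1/2}$. The Karamata fallback also uses only values $A(e^{-t'})$ at real $t'$, known up to $1+o(1)$, so it cannot distinguish $a$ from $f$ either.

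The missing ingredient is information off the real axis. You need control of $A(e^{-t-iy})$ for $t^{3/2}\lesssim|y|\lesssim t$, for instance $A(e^{-z})\ll|z|^{\beta}e^{C/|z|}$ as $z\to0$ in a cone $|\operatorname{Im}z|\le\Delta\operatorname{Re}z$. This is the extra hypothesis in the corrected form of Ingham's theorem due to Bringmann, Jennings-Shaffer and Mahlburg, and it excludes the example above. At $y\asymp t^{6/5}$ the oscillating term resonates, so $|A(e^{-z})|$ has size $t^{\beta}e^{C/t}$, far above $|z|^{\beta}e^{C/|z|}\approx t^{\beta}e^{C/t-c\,t^{-3/5}}$.

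The paper gives no proof of its own; it only cites Ingham, and it quotes the theorem without this condition. So a complete argument, and the paper's later applications of it, must verify such a cone estimate.

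A smaller slip in Step 1: the bound $\sum_{m\le n}a(m)e^{-mt}\le(n+1)a(n)$ only yields $\log a(n)\ge\sqrt{Cn}-O(\log n)$. To reach $2\sqrt{Cn}$ you must keep the factor $e^{-mt}\le e^{-(1-\eta)nt}$ on the range where $w_t$ concentrates.
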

We also require the following asymptotics of $\frac{1}{(q)_{\infty}}$ with $q=e^{-t}$ and $t\to 0^{+}$. From \cite[(118.5)]{Rademacher}, we have 
\begin{equation}\label{eqn1}
\frac{1}{(e^{-t};e^{-t})}\sim \sqrt{\frac {t}{2\pi}}e^{\frac{\pi^2}{6t}}\ \ \text{as} \ \ t\rightarrow 0^{+}.
\end{equation}

\section{Proof of Theorems \ref{thm_aod}--\ref{asymp2}}\label{sec2}


\emph{Proof of Theorem \ref{thm_aod}}: From \ref{def2}, we have
\begin{eqnarray}
\sum_{n=0}^{\infty}a^{o}_{od}(n)q^n &=&  \sum_{n=0}^{\infty} \frac{q^{1+2+3+\cdots+(2n-1)+2n}\displaystyle\prod_{m=n+1}^{\infty}(1+q^{2m+1})}{(q^2;q^2)_{\infty}}\nonumber \\ &=& \frac{(-q;q^2)_{\infty}}{(q^2;q^2)_{\infty}} \sum_{n=0}^{\infty} \frac{q^{\binom{2n+1}{2}}}{(-q;q^2)_{n+1}}=\frac{(-q;q^2)_{\infty}}{(q^2;q^2)_{\infty}} \sum_{n=0}^{\infty} \frac{q^{2n^2+n}}{(-q;q^2)_{n+1}}, \nonumber
\end{eqnarray}
which concludes the proof of \eqref{thm_aod1}. 

Next, for $a^{e}_{od}(n)$, according to \ref{def2}, we get
\begin{eqnarray}
&&\sum_{n=0}^{\infty}a^{e}_{od}(n)q^n = \sum_{n=0}^{\infty} \frac{q^{1+2+3+\cdots+(2n-1)}{\displaystyle\prod_{m=n}^{\infty}(1+q^{2m+1})}}{\displaystyle\prod_{\underset{m \neq n}{m=1}}^{\infty}(1-q^{2m})}= \frac{(-q;q^2)_{\infty}}{(q^2;q^2)_{\infty}} \sum_{n=0}^{\infty} \frac{q^{\binom{2n}{2}}(1-q^{2n})}{(-q;q^2)_{n}} \nonumber\\
&=&\frac{(-q;q^2)_{\infty}}{(q^2;q^2)_{\infty}} \left(\sum_{m=0}^{\infty}\frac{q^{2n^2-n}}{(-q;q^2)_n}-\sum_{n=0}^{\infty}\frac{q^{2n^2+n}}{(-q;q^2)_{n}}\right)= \frac{(-q;q^2)_{\infty}}{(q^2;q^2)_{\infty}} \left(v_2(q)-\sum_{n=0}^{\infty}\frac{q^{2n^2+n}}{(-q;q^2)_{n}}\right).\nonumber 
\end{eqnarray}
This concludes the proof.\qed 

\emph{Proof of Theorem \ref{increasing2}}: Recall that $\mathcal{P}_{od}(n)$ denotes the number of partitions of $n$ without repeated odd parts, precisely,
\begin{equation}\label{oddptn1}
\mathcal{P}_{od}(n):=\left\{\l:=\left(\l_1,\l_2,\cdots,\l_m\right)\vdash n: \text{mult}(\l_i)\le 1\ \text{if}\ \l_i\equiv 1(\text{mod}\ 2)\right\}.
\end{equation}
For $\l\in \mathcal{P}_{od}(n)$, we split $\l$ into two disjoint components as $\l:=\l_{od}\cup \l_{er}$, where 
\begin{equation}\label{oddptn2}
\l_{od}:=\left(\l_{o_1},\l_{o_2},\cdots,\l_{o_r}\right)\ \text{with}\ \l_{o_i}\equiv 1(\text{mod}\, 2),\ \l_{o_i}>\l_{o_j}\ \text{for}\ i<j,
\end{equation}
and
\begin{equation}\label{oddptn3}
\l_{er}:=\left(\l^{m_1}_{e_1}\l^{m_2}_{e_2}\cdots\l^{m_s}_{e_s}\right)\ \text{with}\ \l_{e_i}\equiv 0\ (\text{mod}\, 2),\ m_i:=\text{mult}(\lambda_{e_i}),\ \l_{e_i}>\l_{e_j}\ \text{for}\ i<j.
\end{equation}
It is easy to see that
\begin{equation}\label{oddptn4}
n=\sum_{\ell=1}^{r}\lambda_{o_\ell}+\sum_{k=1}^{s}\text{mult}(\lambda_{e_k})\cdot\lambda_{e_k}.
\end{equation}
First, we prove \eqref{aood}. For $n\in\{1,2\}$, we can verify \eqref{aood} easily. Thus, our goal is to prove \eqref{aood} for $n \ge 3$. To do so, following \eqref{oddptn2} and \eqref{oddptn3}, we decompose $\mathcal{P}_{od}(n)$ into two subsets
$\mathcal{P}_{od}(n):=\bigcup\limits_{i=1}^{2}\mathsf{P}^{[i]}_{od}(n)$,
where
\begingroup
\allowdisplaybreaks
\begin{align}\label{eqn3'}
\mathsf{P}^{[1]}_{od}(n)&:=\left\{\l\in \mathsf{P}_{od}(n):\l_{o_i}-\l_{o_{i+1}}=\l_{e_j}-\l_{e_{j+1}}=2, \l_{o_r}=1, \l_{e_s}=2, \l_{o_1}<\l_{e_1}\right\},\\\label{eqn1'}
\mathsf{P}^{[2]}_{od}(n)&:=\mathsf{P}_{od}(n)\setminus \mathcal{P}^{[1]}_{od}(n).
\end{align}
\endgroup
Thus
$\mathsf{P}^{[1]}_{od}(n)\bigcap \mathsf{P}^{[2]}_{od}(n)=\emptyset$.
For $\l=(\l_1, \l_2, \ldots\l_r) \in \mathcal{P}_{od}(n)$ with $\l_1> \l_2> \ldots> \l_r$, define a map 
$f: \mathcal{P}_{od}(n) \longrightarrow \mathcal{P}_{od}(n+1)$ by $f(\l):=(\l_1+1, \l_2, \ldots\l_r)$.
By definition of $f$, it is immediate that it is an injective map and $f(\l)\in \mathcal{P}_{od}(n+1)$.  Now, we compute $mex(\l)$ for $\l \in \mathcal{P}_{od}(n)$ and corresponding $mex(f(\l))$ for $f(\l) \in \mathcal{P}_{od}(n+1)$ individually for $\{\mathsf{P}^{[j]}_{od}(n)\}_{1 \le j\le2}$.

For $\l \in \mathsf{P}^{[2]}_{od}(n)$, we see that if $\text{mex}(\l)\equiv 1 (\text{mod 2})$, then $\text{mex}(f(\l))\equiv 1 (\text{mod 2})$. Thus using the bijection $f:\mathsf{P}^{[2]}_{od}(n) \longrightarrow \widetilde{\mathsf{P}}^{[2]}_{od}(n+1):=f(\mathsf{P}^{[2]}_{od}(n))$, we have 
\begin{equation*}
a^{o}_{od}(n)= \underset{\underset{\text{mex} \equiv 1(\text{mod}2)}{\l \in \mathcal{P}_{od}(n)}}{\sum}1\le \underset{\underset{\text{mex} \equiv 1(\text{mod}2)}{\l \in \mathsf{P}^{[1]}_{od}(n)}}{\sum}1 +\underset{\underset{\text{mex} \equiv 1(\text{mod}2)}{\widetilde{\l} \in \widetilde{\mathsf{P}}^{[2]}_{od}(n+1)}}{\sum}1.
\end{equation*}
Now, if $\l\in\mathsf{P}^{[1]}_{od}(n)$, it is of the following form 
\begin{equation}\label{prev}
\l:=\left((2m)^{\a_m},2m-1,(2m-2)^{\a_{m-1}},\cdots,2^{\a_1},1\right)\ \ \text{with}\ \a_j\in \mathbb{N}\ \ \text{for}\ \ 1\le j\le m.
\end{equation}
It is clear that $\text{mex}(\l)=2m+1\equiv 1\ (\text{mod 2})$ but $\text{mex}(f(\l))=2m+2 \equiv 0\  (\text{mod 2})$. For any such partition $\lambda \in \mathsf{P}^{[1]}_{od}(n)$, there always exists a partition, say $\mu\in \mathcal{P}_{od}(n+1)\setminus f(\mathsf{P}^{[1]}_{od}(n))$ with
$\mu=\left((2m)^{\a_m},2m-1,(2m-2)^{\a_{m-1}},\cdots,2^{\a_1+1}\right)$, where $\a_j$ are as in \eqref{prev}. Thus $\text{mex}(\mu)=1\equiv 1\ (\text{mod}\ 2)$. This infers that $a^{o}_{od}(n+1)\geq a^{o}_{od}(n)$.\\
For example, $\l=\{4,4,3,2,2,2,1\}$, so $\text{mex}(\l)=5\equiv1\ (\text{mod}2)$. Now $f(\l)=\{5,4,3,2,2,2,1\}$ and $\text{mex}(f(\l))=6\equiv0\ (\text{mod}\ 2)$. Observe that for $\l \in \mathsf{P}^{[1]}_{od}(n)$, $\mu=\{4,4,3,2,2,2,2\}\in\mathcal{P}_{od}(n+1)\setminus f(\mathsf{P}^{[1]}_{od}(n))$. It remains to show that the inequality is strict. To prove this, we consider the parity of $n+1$. For $n\equiv 0\ \left(\text{mod}\ 2\right)$, $\{2,2,\ldots2,1\} \in \mathcal{P}_{od}(n+1)\setminus f(\mathcal{P}_{od}(n))$ with corresponding $\text{mex} \equiv 1 (\text{mod }2)$. Whereas for $n\equiv 1\ \left(\text{mod}\ 2\right)$, we further split it into two cases, $\frac{n+1}{2}$ is even or odd. For $\frac{n+1}{2}$ even, $\{\frac{n+1}{2},\frac{n+1}{2}\} \in \mathcal{P}_{od}(n+1)\setminus f(\mathcal{P}_{od}(n))$ with $\text{mex} \equiv 1 (\text{mod }2)$. Finally, if $\frac{n+1}{2}$ is odd, $\{\frac{n-1}{2},\frac{n-1}{2},2\} \in \mathcal{P}_{od}(n+1)\setminus f(\mathcal{P}_{od}(n))$ with $\text{mex} \equiv 1 (\text{mod }2)$. Thus \eqref{aood} follows. 

Finally, we show \eqref{aeod}. For $n\in \{7,8,9,10\}$, \eqref{aeod} can be checked easily and so we prove \eqref{aeod} for $n\geq 11$. Following \eqref{oddptn2} and \eqref{oddptn3}, we decompose $\mathcal{P}_{od}(n)$ into two subsets
$\mathcal{P}_{od}(n):=\bigcup\limits_{i=3}^{4}\mathsf{P}^{[i]}_{od}(n)$,
where
\begingroup
\allowdisplaybreaks
\begin{align*}
\mathcal{P}^{[3]}_{od}(n)&:=\left\{\l\in \mathcal{P}_{od}(n):\l_{o_i}-\l_{o_{i+1}}=\l_{e_j}-\l_{e_{j+1}}=2, \l_{o_r}=1, \l_{e_s}=2, \l_{o_1}>\l_{e_1}\right\},\\
\mathcal{P}^{[4]}_{od}(n)&:=\mathcal{P}_{od}(n)\setminus \mathsf{P}^{[3]}_{od}(n).
\end{align*}
\endgroup
Thus
$\mathsf{P}^{[3]}_{od}(n)\bigcap \mathsf{P}^{[4]}_{od}(n)=\emptyset$.
For $\l=(\l_1, \l_2, \ldots\l_r) \in \mathcal{P}_{od}(n)$ with $\l_1> \l_2> \ldots> \l_r$, define a map 
$f: \mathcal{P}_{od}(n) \longrightarrow \mathcal{P}_{od}(n+1)$ by $f(\l):=(\l_1+1, \l_2, \ldots\l_r)$.
By definition of $f$, it is immediate that it is an injective map and $f(\l)\in \mathcal{P}_{od}(n+1)$. Next we compute $mex(\l)$ for $\l \in \mathcal{P}_{od}(n)$ and $mex(f(\l))$ for $f(\l) \in \mathcal{P}_{od}(n+1)$ for each of the subsets $\{\mathsf{P}^{[j]}_{od}(n)\}_{3 \le j\le4}$. 
We note that for $\l \in \mathsf{P}^{[4]}_{od}(n)$ with $\text{mex}(\l)\equiv 0\ (\text{mod 2})$, then $\text{mex}(f(\l))\equiv 0 (\text{mod 2})$. Thus following the bijection $f:\mathsf{P}^{[4]}_{od}(n) \longrightarrow \widetilde{\mathsf{P}}^{[4]}_{od}(n+1):=f(\mathsf{P}^{[4]}_{od}(n))$, it follows that
\begin{equation*}
a^{e}_{od}(n)= \underset{\underset{\text{mex} \equiv 0\ (\text{mod}2)}{\l \in \mathcal{P}_{od}(n)}}{\sum}1\le \underset{\underset{\text{mex} \equiv 0\ (\text{mod}2)}{\l \in \mathsf{P}^{[3]}_{od}(n)}}{\sum}1 +\underset{\underset{\text{mex} \equiv 0\ (\text{mod}2)}{\widetilde{\l} \in \widetilde{\mathsf{P}}^{[4]}_{od}(n+1)}}{\sum}1.
\end{equation*}
For $\l \in \mathsf{P}^{[3]}_{od}(n)$, it is of the following form
\begin{equation}\label{prev1}
\l:=\left(2m+1, (2m)^{\b_m}, 2m-1,(2m-2)^{\b_{m-1}},\cdots, 2^{\b_1},1\right)\ \text{with}\ \b_j\in \mathbb{N}\ \text{for}\ 1\le j\le m.
\end{equation}
Thus $\text{mex}(\lambda)=2m+2\equiv 0\ (\text{mod 2})$ but $\text{mex}(f(\lambda))=2m+1 \equiv 1\ (\text{mod 2})$ because 
$f(\lambda)=\left(2m+2, (2m)^{\b_m-1},2m-1,(2m-2)^{\b_{m-1}},\cdots,2^{\b_1},1\right)$ ($\b_j$'s are as in \eqref{prev1}). Furthermore, we observe that for $\lambda_1 \in \mathsf{P}^{[3]}_{od}(n)$, there exists $\mu\in \mathcal{P}_{od}(n+1)\setminus f(\mathsf{P}^{[3]}_{od}(n))$ which is given by 
\begingroup
\begin{equation*}
\hspace{-0.25 cm}\mu=\begin{cases}
\left((2m+2)^2,(2m)^{\b_m-1},2m-1,(2m-2)^{\b_{m-1}},\cdots, 4^{\b_2+\left\lfloor\frac{\b_1}{2}\right\rfloor},3,1\right)\ \text{if}\ \b_1\ \text{odd}\ (\ge 3),\\
\left((2m+2)^2, 2m+1,(2m)^{\b_m-1},2m-1,(2m-2)^{\b_{m-1}},\cdots,4^{\b_2+\frac{\b_1}{2}-2},3,1\right)\ \text{if}\ \b_1\ \text{even}, m\neq 2,\\
\left(6^2,5,4^{\b_2+\frac{\b_1}{2}-2},1\right)\  \text{if}\ \b_1\ \text{even}, m=2,\\
\left((2m+2)^2, (2m)^{\b_m-1},2m-1,(2m-2)^{\b_{m-1}},\cdots,4^{\b_2},3,1\right)\ \text{if}\ \b_1=1.
\end{cases}
\end{equation*}
\endgroup 
In each of the above cases, $\text{mex}(\mu)=2$ and therefore, $a^{e}_{od}(n+1)\geq a^{e}_{od}(n)$.\\
For example, consider $\l\in \mathsf{P}^{[3]}_{od}(n)$ be either of the following partitions $(7,6,6,5,4,4,4,3,2,1)$, $(7,6,6,5,4,4,3,2,2,2,2,1)$, $(7,6,5,4,4,3,2,2,2,2,2,1)$, and $(5,4,3,2,2,2,2,1)$. Then the corresponding $\mu\in\mathcal{P}_{od}(n+1)\setminus f(\mathcal{P}_{od}(n))$ are given by $(8,8,6,5,4,4,4,3,1),(8,8,7,6,4,4,4,3,1)$, $(8,8,5,4,4,4,4,3,1), (6,6,5,4,1)$ respectively. Finally we show the inequality is strict. To show it, again we consider parity of $n$. For $\frac n2$ even, $(\frac n2, \frac n2, 1) \in \mathcal{P}_{od}(n+1)\setminus f(\mathcal{P}_{od}(n))$ with $\text{mex} \equiv 0\ (\text{mod}\ 2)$. Note that for $n \geq 14$ with $n$ even, choose $\overline{\lambda}:=(4^{\g_2},3,2^{\g_1},1)\in\mathcal{P}_{od}(n)$ with $\g_2\ge 2, \g_1\ge 1$, and $\text{mex} \equiv 1\ (\text{mod }\ 2)$. Consequently, $f(\overline{\lambda})=(5,4^{\g_2-1},3,2^{\g_1},1) \in f(\mathcal{P}_{od}(n))\subset \mathcal{P}_{od}(n+1)$ with $\text{mex} \equiv 0\ (\text{mod }\ 2)$. Finally, we consider $\frac{n+1}{2}$ is even and in this case for $n\ge 11$, $(\frac{n+1}{2},\frac{n-1}{2},1) \in \mathcal{P}_{od}(n+1)\setminus f(\mathcal{P}_{od}(n))$ with $\text{mex} \equiv 0\ (\text{mod }\ 2)$. For $n \geq 14$ with $n$ odd, there exists a partition $(3,2^{\d},1) \in \mathcal{P}_{od}(n+1)\setminus f(\mathcal{P}_{od}(n))$ with $\d\ge 5$ and $\text{mex} \equiv 0\ (\text{mod}\ 2)$. This concludes the proof of \eqref{aeod}.\qed  

\emph{Proof of Theorem \ref{asymp2}}: From Theorem \ref{thm_aod}, we have  
\begin{equation*}
\sum_{n=0}^{\infty}a^{o}_{od}(n)q^n= \frac{(-q;q^2)_{\infty}}{(q^2;q^2)_{\infty}}\sum_{n=0}^{\infty}\frac{q^{2n^2+n}}{(-q;q^2)_{n+1}}=:A_1(q) B_1(q),
\end{equation*} where \begin{equation}\label{A1}
A_1(q)=\frac{(-q;q^2)_{\infty}}{(q^2;q^2)_{\infty}}=\frac{\left(q^2;q^2\right)_{\infty}}{\left(q;q\right)_{\infty}\left(q^4;q^4\right)_{\infty}} \ \ \text{and} \ \ B_1(q)=\sum_{n=0}^{\infty}\frac{q^{2n^2+n}}{(-q;q^2)_{n+1}}.
\end{equation}
Using \eqref{eqn1}, we have
\begin{equation}\label{A1(q)asymp}
A_1(e^{-t})\sim \sqrt{\frac{t}{\pi}}e^{\frac{\pi^2}{8t}}\ \ \text{as} \ \ t \rightarrow 0^{+}.
\end{equation}
Next, as $t\to 0^{+}$, we get
\begin{align*}
B_1(e^{-t})=\sum_{n=0}^{\infty}\frac{e^{-(1+2+3+\ldots+2n) t}}{(-e^{-t};e^{-2t})_{n+1}}\to \sum_{n=0}^{\infty}\frac{1}{2^{n+1}}=1. 
\end{align*}
Consequently with \eqref{A1(q)asymp}, we have
$$A_1(e^{-t})B_1(e^{-t})\sim \sqrt{\frac{t}{\pi}}e^{\frac{\pi^2}{8t}}\ \ \text{as} \ \ t \rightarrow 0^{+}.$$
Using this, Proposition \ref{increasing2}, and applying Proposition \ref{ingham} with $\alpha=\frac{1}{\sqrt{\pi}}$, $\beta=\frac 12$ and $C=\frac{\pi^2}{8}$, we have 
\begin{equation*}
a^{o}_{od}(n) \sim \frac{e^{\pi\sqrt{\frac{n}{2}}}}{4\sqrt{2}n}.
\end{equation*} 
This concludes the proof of the first claim of \eqref{asympaodoe}. 

Next we prove the second claim in \eqref{asympaodoe}. By Theorem \ref{thm_aod}, we have  \begin{equation*}
\sum_{n=0}^{\infty}a^{e}_{od}(n)q^n= \frac{(-q;q^2)_{\infty}}{(q^2;q^2)_{\infty}}\sum_{n=0}^{\infty}\frac{q^{2n^2-n}(1-q^{2n})}{(-q;q^2)_{n}}=:A_1(q) B_2(q),
\end{equation*} where $A_1(q)$ be as in \eqref{A1} and 
 $B_2(q)=\sum_{n=0}^{\infty}\frac{q^{2n^2-n}(1-q^{2n})}{(-q;q^2)_{n}}$.
We note that
\begin{align*}
B_2(e^{-t})&=\sum_{n=1}^{\infty}\frac{e^{-(1+2+3+\ldots+(2n-1)) t}(1-e^{-2nt})}{(-e^{-t};e^{-2t})_{n}}  \nonumber\\ 
&=
\left(\frac{1-e^{-2t}}{1+e^{t}}+\frac{e^{-2t}(1-e^{-4t})}{(1+e^{t})(1+e^{3t})}+\cdots+\frac{e^{-(2+4+\ldots+(2n-2))t}(1-e^{-2nt})}{(1+e^{t})(1+e^{3t})\cdots(1+e^{(2n-1)t})}+\cdots\right). 
\end{align*}
Since, $B_2(e^{-t})\rightarrow 0$ as $t \rightarrow 0^{+}$ and
$\lim_{t\rightarrow0^{+}} \frac{\frac{e^{-(2+4+\ldots+(2n-2))t}(1-e^{-2nt})}{(1+e^{t})(1+e^{3t})\cdots (1+e^{(2n-1)t})}}{t}=\frac{n}{2^{n-1}}$,
 we have $B_2(e^{-t})\sim \sum_{n=1}^{\infty}\frac{n}{2^{n-1}}=4t$. Combining this with \eqref{A1(q)asymp}, we get $A_2(e^{-t})B_2(e^{-t})\sim \frac{4t^{\frac 32}}{\sqrt{\pi}}e^{\frac{\pi^2}{8t}}$ as $t \rightarrow 0^{+}.$ Consequently, by Proposition \ref{increasing2} and Proposition \ref{ingham} with $\alpha=\frac{4}{\sqrt{\pi}}$, $\beta=\frac 32$ and $C=\frac{\pi^2}{8}$, we have 
$a^{e}_{od}(n) \sim \frac{e^{\frac{\pi\sqrt{n}}{2}}}{\sqrt{2}n}$.
This concludes the proof.\qed

\section{Proof of Theorems \ref{thm_sigma}--\ref{thm_meex-4e}}\label{sec3}

In this section, we first prove Theorem \ref{thm_sigma} in two different ways, via $q$-series transformations and combinatorial framework. Next, we present two proofs of Theorem \ref{rem2} by using Theorem \ref{thm_sigma} and constructing a bijection (with an explicit examples) in spirit of the work by Ballantine and Merca \cite{BM}. Then we move on to present proofs of Theorems \ref{increasing} and \ref{asymp}. Finally, we conclude this section by proving Theorem \ref{thm_meex-4e}.

\emph{Proof of Theorem \ref{thm_sigma}}: Using \eqref{def3}, we have
\begin{eqnarray}
&&\sum_{n=0}^{\infty}\sigma_{od}\text{moex}(n)q^n = \sum_{n=0}^{\infty}\frac{(2n+1) q^{1+3+5+\cdots+(2n-1)}(-q^{2n+3};q)_{\infty}}{(q^2;q^2)_{\infty}}= \frac{(-q;q^2)_{\infty}}{(q^2;q^2)_{\infty}} \sum_{n=0}^{\infty}\frac{(2n+1)q^{n^2}}{(-q;q^2)_{n+1}} \nonumber \\ 
&&= \frac{(-q;q^2)_{\infty}}{(q^2;q^2)_{\infty}}\!\sum_{n=0}^{\infty}\frac{(2n+1)q^{n^2}}{(-q;q^2)_{n}} \Biggl(1\!-\!\frac{q^{2n+1}}{1+q^{2n+1}}\Biggr)\!=\!\frac{(-q;q^2)_{\infty}}{(q^2;q^2)_{\infty}} \Biggl(\sum_{n=0}^{\infty}\frac{(2n+1)q^{n^2}}{(-q;q^2)_{n}}\!-\! \sum_{n=0}^{\infty}\frac{(2n+1)q^{(n+1)^2}}{(-q;q^2)_{n+1}} \Biggr) \nonumber \\ 
&&= \frac{(-q;q^2)_{\infty}}{(q^2;q^2)_{\infty}} \Biggl(\sum_{n=0}^{\infty}\frac{(2n+1)q^{n^2}}{(-q;q^2)_{n}} - \sum_{n=1}^{\infty}\frac{(2n-1)q^{n^2}}{(-q;q^2)_{n}} \Biggr)= \frac{(-q;q^2)_{\infty}}{(q^2;q^2)_{\infty}}\Biggl(1+2 \cdot \sum_{n=1}^{\infty}\frac{q^{n^2}}{(-q;q^2)_{n}} \Biggr)\nonumber\\
&&=\frac{(-q;q^2)_{\infty}}{(q^2;q^2)_{\infty}}\Bigl(1+\s^*(-q) \Bigr),\nonumber
\end{eqnarray}
using \eqref{cohensigma} (with $q\mapsto -q$) in the final step. This concludes the proof of \eqref{thm_sigma1}. 

Analogously, following \eqref{def3}, we have
\begin{eqnarray}\nonumber 
&&\sum_{n=0}^{\infty}\sigma_{od}\text{meex}(n)q^n = (-q;q^2)_{\infty}\sum_{n=0}^{\infty}\frac{2n q^{2+4+6+\cdots+(2n-2)}}{\displaystyle \prod_{\underset{m \neq n}{m=1}}^{\infty}(1-q^{2m})}= \frac{(-q;q^2)_{\infty}}{(q^2;q^2)_{\infty}}\sum_{n=0}^{\infty} 2n q^{2 \binom{n}{2}}(1-q^{2n})\\\nonumber
&&= \frac{(-q;q^2)_{\infty}}{(q^2;q^2)_{\infty}}\Biggl(\sum_{n=0}^{\infty} 2n q^{2 \binom{n}{2}}-\sum_{n=0}^{\infty} 2n q^{2\binom{n+1}{2}} \Biggr)=\frac{(-q;q^2)_{\infty}}{(q^2;q^2)_{\infty}}\Biggl(\sum_{n=0}^{\infty} 2n q^{2 \binom{n}{2}}-\sum_{n=1}^{\infty} (2n-2)q^{2\binom{n}{2}} \Biggr) \\\nonumber
&& = 2\frac{(-q;q^2)_{\infty}}{(q^2;q^2)_{\infty}}\sum_{n=0}^{\infty}   q^{2\binom{n+1}{2}}=  2\frac{(-q;q^2)_{\infty}}{(q^2;q^2)_{\infty}} \frac{(-q^2;q^2)_{\infty}(q^2;q^2)_{\infty}}{(q;q^2)_{\infty}(-q;q^2)_{\infty}}=2 \sum_{n=0}^{\infty}p_{ed}(n)q^n,
\end{eqnarray}
using \eqref{Gauss} (with $q\mapsto q^2$) in the penultimate step. This concludes the proof.\qed 

\emph{An alternative proof of Theorem \ref{thm_sigma}}: First we prove \eqref{thm_sigma1}. Let $M^{od}_{m}(n)$ denote the number of partitions $\lambda\vdash n$ into distinct odd parts with $moex(\lambda)>m$. For $j\in \mathbb{N}$, we first show that 
\begin{equation}\label{Mod}
M^{od}_{2j-1}(n)=p_{od}(n-j^2,2j-1)\ \ \text{and}\ \ M^{od}_{0}(n)=p_{od}(n),
\end{equation}
where $p_{od}(n,2j-1)$ denotes the number of partitions of $n$ into distinct odd parts with smallest odd part $> 2j-1$. Let $\lambda\vdash n$ into distinct odd parts with $moex(\lambda)>2i-1$. Then $\lambda$ does not contain $1,3,\ldots,2j-1$ as parts, i.e., $\lambda\vdash n-j^2$ into distinct odd parts with smallest odd part $>2j-1$. Similarly the converse is also true and this proves \eqref{Mod}. Next we show that 
\begin{equation}\label{Mod2}
\sigma_{od}moex(n)=M^{od}_0(n)+2\sum_{j\ge 1}M^{od}_{2j-1}(n).
\end{equation}
Since $M^{od}_{2j-1}(n)=M^{od}_{2j}(n)$, each $\lambda\vdash n$ with $moex(\lambda)=2j+1$ is counted exactly twice for each of $M^{od}_{1}(n),M^{od}_{3}(n),\ldots,M^{od}_{2j-1}(n)$ and once for $M^{od}_{0}(n)$. Thus counted in total $2j+1$ times (each partition with  $moex(\lambda)=1$ is counted exactly once in $M^{od}_{0}(n)$) and consequently, the resulting sum on the right-hand side of \eqref{Mod2} yields $\sigma_{od}\text{moex}(n)$. This proves \eqref{Mod2}. Since $\sum_{n\ge 0}p_{od}(n-j^2,2j-1)q^n=\frac{(-q^{2j+1};q^2)_{\infty}}{(q^2;q^2)_{\infty}}$, which counts the number of partitions of $\l\vdash n-j^2$ with smallest odd part $>2j-1$ is the coefficient of $q^{n-j^2}$ in $\frac{(-q^{2j+1};q^2)_{\infty}}{(q^2;q^2)_{\infty}}$, i.e., the coefficient of $q^n$ in $q^{j^2}\frac{(-q^{2j+1};q^2)_{\infty}}{(q^2;q^2)_{\infty}}$. Thus, by \eqref{Mod} and \eqref{Mod2}, we have
\begin{align*}
&\sum_{n=0}^{\infty}\sigma_{od}moex(n)q^{n}
= \sum_{n=0}^{\infty}p_{od}(n)q^n+2\sum_{n=0}^{\infty}q^n\sum_{j\ge 1}p_{od}(n-j^2,2j-1)\\ 
&= \frac{(-q;q^2)_{\infty}}{(q^2;q^2)_{\infty}}+\frac{2}{(q^2;q^2)_{\infty}}\sum_{j\ge 1} q^{j^2}(-q^{2i+1};q^2)_{\infty}= \frac{(-q;q^2)_{\infty}}{(q^2;q^2)_{\infty}}+2 \frac{(-q;q^2)_{\infty}}{(q^2;q^2)_{\infty}} \sum_{j\ge 1}\frac{q^{j^2}}{(-q;q^2)_{j}} \\
&= \frac{(-q;q^2)_{\infty}}{(q^2;q^2)_{\infty}}(1+\sigma^{*}(-q)).
\end{align*} 
This completes the proof of \eqref{thm_sigma1}. 

Next we prove \eqref{thm_sigma2}. Let $\overline{M}^{od}_{m}(n)$ denote the number of partitions $\lambda\vdash n$ into distinct odd parts with $meex(\lambda)>m$. For $j\in \mathbb{N}_0$, we show that 
\begin{equation}\label{Mod3}
\overline{M}^{od}_{2j}(n)=p_{od}\left(n-(j^2+j),2j\right)=p_{od}(n-j(j+1)).
\end{equation}
Let $\lambda\vdash n$ be a partition into distinct odd parts with $meex(\lambda)>2j$. Precisely $\lambda\vdash n$ is of the form $\lambda=(\lambda_{o})\cup(\l^{m_k}_{e_k},\l^{m_{k-1}}_{e_{k-1}},\cdots,\l^{m_1}_{e_1})$, where $m_{\ell}=\text{mult}\left(\l_{e_{\ell}}\right)$, $\lambda_{e_s}<\lambda_{e_{t}}$ for $s<t$, and $\lambda_{e_k}> 2j$. Then each such $\lambda$ is counted by the existence of $2,4,\cdots,2j$ as parts with remaining part as an arbitrary partition of $n-(2+4+\cdots+2j)=n-j(j+1)$ into distinct odd parts and this proves \eqref{Mod3}. Next we claim that 
\begin{equation}\label{Mod4}
\sigma_{od}\text{meex}(n)=2\sum_{j\ge 0}\overline{M}^{od}_{2j}(n).
\end{equation}
Since $\overline{M}^{od}_{2j}(n)=\overline{M}^{od}_{2j+1}(n)$, each partition $\lambda\vdash n$ with $meex(\lambda)=2j+2$ is counted exactly twice in each of $\overline{M}^{od}_{0}(n),\overline{M}^{od}_{4}(n),\cdots,\overline{M}^{od}_{2i}(n)$, i.e., counted in total $2j+2$ times and consequently their resulting sum gives the right hand side of \eqref{Mod4} which proves \eqref{Mod4}. Thus, using \eqref{Mod3} and \eqref{Mod4}, we have
\begin{align*}
\sum_{n=0}^{\infty}\sigma_{od}\text{meex}(n)q^{n} &= 2\sum_{n=0}^{\infty}q^{n}\sum_{j\ge 0}p_{od}(n-j(j+1))= 2\frac{(-q;q^2)_{\infty}}{(q^2;q^2)_{\infty}}\sum_{j\ge 1} q^{j(j+1)} \\&=2 \frac{(-q;q^2)_{\infty}}{(q^2;q^2)_{\infty}} \frac{(-q^2;q^2)_{\infty}(q^2;q^2)_{\infty}}{(q;q^2)_{\infty}(-q;q^2)_{\infty}}= 2 \sum_{n=0}^{\infty}p_{ed}(n)q^n, 
\end{align*} 
using \eqref{Gauss} in the penultimate step. This concludes the proof of \eqref{thm_sigma2}.\qed

\emph{Proof of Theorem \ref{rem2}}: Comparing coefficients of $q^n$ in \eqref{thm_sigma2}, we conclude the proof of Theorem \ref{rem2}.\qed

\emph{A combinatorial proof of Theorem \ref{rem2}}: To construct the bijection, we interpret partitions into distinct even parts in the setting of two-colored partitions. Let $D^{e}_2(n)$ be the set of two-colored\footnote{We label two colors using $0$ and $1$.} partitions of $n$ into distinct parts where only even parts can only appear with two colors. Denote $d^{e}_2(n)=|D^{e}_2(n)|$. For example, $4_0+4_1+3+1 \in D^{e}_2(12)$. Due to Euler, we know that $(-q)_{\infty}=\frac{1}{(q;q^2)_{\infty}}$. Consequently, we see that
\begin{equation*}
\sum_{n=0}^{\infty}p_{ed}(n)q^n=\frac{(-q^2;q^2)_{\infty}}{(q;q^2)_{\infty}}=(-q^2;q^2)_{\infty}(-q)_{\infty}=(-q^2;q^2)^2_{\infty}(-q;q^2)_{\infty}. 
\end{equation*}
Thus from the definition of $D^{e}_2(n)$, we see that $p_{ed}(n)=d^{e}_2(n)$. Now to present a combinatorial proof of \eqref{thm_sigma2}, from \eqref{Mod3} and \eqref{Mod4}, it is now enough to construct a bijection between $\underset{k\geq0}{\bigcup}P_{od}(n-k(k+1))$ and $D^{e}_2(n)$, where $P_{od}(n-k(k+1))$ is the set of all partitions of $n-(2+4+\cdots+(2k-2)+2k)$ into restricted odd parts. For $k\in \mathbb{N}$, let $\d(k)$ be the stairecse partition $\d(k)=k+(k-1)+\cdots+3+2+1$ and $\d(0):=1$. Following \cite[Definition 1]{BM}, here we have an example of a stairecase profile of a Ferrers diagram, shown below:
\begin{center}
	\begin{tikzpicture}[scale=0.5, line width=0.5pt]
	\draw (0,0) grid (7,1);
	\draw[line width=2pt] (0,1) -- ++(1,0);
	\draw (0,0) grid (5,-1);
	\draw[line width=2pt] (1,1) -- ++(0,-1);
	\draw[line width=2pt] (1,0) -- ++(1,0);
	\draw[line width=2pt] (2,0) -- ++(0,-1);
	\draw[line width=2pt] (2,-1) -- ++(1,0);
	\draw[line width=2pt] (3,-1) -- ++(0,-1);
	\draw[line width=2pt] (2,-1) -- ++(1,0);
	\draw[line width=2pt] (3,-2) -- ++(1,0);
	\draw (0,0) grid (4,-2);
	\draw (0,0) grid (2,-3);
	\end{tikzpicture}
\end{center}
Given a Ferrers diagram of a partition $\lambda$ into
distinct parts, the shifted Ferrers diagram of $\lambda$ is the diagram in which row $j$ is
shifted $j-1$ units to the right. Define a map $\phi: \underset{k\geq0}{\bigcup}P_{od}(n-k(k+1)) \longrightarrow D^{e}_2(n)$ as follows. Note that a partition $\lambda$ of $n$ into only even parts corresponds to a partition of $\frac{n}{2}$ into an ordinary partition by dividing all parts of $\lambda$ by 2. Let $\lambda:=\lambda_{od} \sqcup \lambda_{er}\in P_{od}(n-k(k+1))$ for some $k \geq 0$, where $\l_{od}:=(\l_{o_1},\l_{o_2},\cdots,\l_{o_r})$ with $\l_{o_{\ell}}\equiv 1(\text{mod}\ 2)$, $\l_{o_s}>\l_{o_t}$ for $s<t$, 
and $\l_{er}:=(\l^{m_1}_{e_1},\l^{m_2}_{e_2}\cdots,\l^{m_s}_{e_s})$ with $\l_{e_{\ell}}\equiv 0\ (\text{mod}\, 2)$, $m_{\ell}:=\text{mult}(\lambda_{e_{\ell}})$ with $\l_{e_s}>\l_{e_t}$ for $s<t$.
Define  
$n_1:=n-k(k+1)=\sum_{\ell=1}^{r}\lambda_{o_\ell}+\sum_{k=1}^{s}\text{mult}(\lambda_{e_k})\cdot\lambda_{e_k}$.
Now consider the partition $\l^{'}:=((\frac{\l_{e_1}}{2})^{m_1},(\frac{\l_{e_2}}{2})^{m_2},\cdots,(\frac{\l_{e_s}}{2})^{m_s})\vdash \frac{n_1-\sum_{\ell=1}^{r}\lambda_{o_\ell}}{2}$. Append a diagram with rows of lengths $1,2,\cdots,k$, i.e., the Ferrers diagram of $\d(k)$ rotated by $180^\circ$, to
the top of Ferrers diagram of the conjugate partition $\lambda^{'}$. Here are the following steps of constructions.
\begin{enumerate}
\item First, we draw the
staircase profile of the new diagram. Let $\l^{'}_0$ be the partition whose parts are the length of the columns to the left of the staircase profile and let $\l^{'}_1$ be the partition whose parts are the length of the rows to the right of the staircase profile.  Then
$\l^{'}_0$ and $\l^{'}_1$ are partitions with distinct parts. Since the length difference of $\l^{'}_0$ and $\l^{'}_1$, i.e., $\ell(\l^{'}_0)-\ell(\l^{'}_1)$ is $k$ when the staircase profile ends with a horizontal line and is equal to $k+1$ when staircase profile ends with a vertical line respectively, so that we have $k\leq \ell(\l^{'}_0)-\ell(\l^{'}_1) \leq k+1$.
\item Then we multiply each parts of $\l^{'}_0$ and $\l^{'}_1$ by $2$.
\item  Finally, we color the parts of $\l^{'}_0$ with color $k\ (\text{mod}\ 2)$ and the parts of $\l^{'}_1$ with color $k+1\ (\text{mod}\ 2)$ so that the new partitions formed, say $\l^{"}_0$ and $\l^{"}_1$ respectively, are partitions with only even distinct parts and therefore $\l^{"}_0 \sqcup \l^{"}_1$ forms a partition of $n-\sum_{\ell=1}^{r}\lambda_{o_\ell}$ into partitions with only even distinct two color parts. Thus $\phi(\l):=\l^{"}_0 \sqcup \l^{"}_1\sqcup\l_{od}$ is a partition in $D^{e}_{2}(n)$.
\end{enumerate}
From this construction, through the staircase profile of diagram, it is clear that $\phi$ is a well-defined map. We see that for $k$ even, $\ell(\l^{"}_0)\geq \ell(\l^{"}_1)$ and for $k$ odd we have $\ell(\l^{"}_1)> \ell(\l^{"}_0)$. Thus from the range of the corresponding length difference of $\ell(\l^{"}_0)$ and $\ell(\l^{"}_1)$ and taking into consideration the way of coloring, for $k\in \mathbb{N}_{\geq 0}$,  $\phi\left(P_{od}\left(n-k(k+1)\right)\right)\cap\phi\left(P_{od}\left(n-(k+1)(k+2)\right)\right)=\emptyset$. Also by the construction of $\phi$, all elements of $\phi\left(P_{od}\left(n-k(k+1)\right)\right)$ are distinct. Thus, $\phi$ is injective. Conversely, we split $\overline{\lambda}:=\overline{\lambda}_{od} \sqcup \overline{\lambda^0}_{er}\sqcup\overline{\l^1}_{er}\in D^{e}_2(n)$, where $\overline{\l}_{od}:=(\overline{\l}_{o_1},\overline{\l}_{o_2},\cdots,\overline{\l}_{o_r})$ with $\overline{\l}_{o_{\ell}}\equiv 1\ (\text{mod}\ 2)$, $\overline{\l}_{o_s}>\overline{\l}_{o_t}$ for $s<t$,  
and $\overline{\l^{j}}_{er}:=(\overline{\l^{j}}_{e_1},\overline{\l^{j}}_{e_2},\cdots,\overline{\l^{j}}_{e_{s_j}})$ with $\overline{\l^{j}}_{e_{\ell}}\equiv 0\ (\text{mod}\ 2)$, $\overline{\l^{j}}_{e_s}>\overline{\l^{j}}_{e_{t}}$ for $s<t$ and $j\in\{0,1\}$.
We set 
$n=\sum_{\ell=1}^{r}\overline{\lambda}_{o_\ell}+\sum_{k=1}^{s_0}\overline{\lambda^{0}}_{e_k}+\sum_{k=1}^{s_1}\overline{\lambda^{1}}_{e_k}$,
and consider the ordinary partitions $\overline{\l}_j:=(\frac{\overline{\l^j}_{e_1}}{2},\frac{\overline{\l^j}_{e_2}}{2},\cdots,\frac{\overline{\l^j}_{e_{s_j}}}{2})$ with $j\in\{0,1\}$. Next we define the map $\phi_1: D^{e}_2(n)\longrightarrow \underset{k\geq 0}{\bigcup} P_{od}\left(n-k(k+1)\right)$ as follows. For $\ell(\overline{\l}_0)\geq \ell(\overline{\l}_1)$, let $k':=\ell(\overline{\l}_0) -\ell(\overline{\l}_1)$. Let $k:=k'+\frac{(-1)^{k'}-1}{2}$ so that if $k'$ is even, $k=k'$ and otherwise $k=k'-1$. In this setting, our construction is done via the following steps: 
\begin{enumerate}
	\item First, we remove the top $k$ rows, i.e., the rotated Ferrers diagram of $\d(k)$ from the conjugate of the shifted diagram of $\overline{\l}_0$.
	\item Then we join the remaining diagram with the shifted digram of $\overline{\l}_1$ so that they align at the top.
	\item In the final step, we multiply each part by $2$ and denote the obtained partition by $\overline{\l}_2$. Then the partition $\phi_1(\overline{\l}):=\overline{\l}_{od}\sqcup\overline{\l}_2\in P_{od}\left(n-k(k+1)\right)$. 
\end{enumerate}
For $\ell(\overline{\l}_1)> \ell(\overline{\l}_0)$, we proceed as above with $k:=k'-\frac{(-1)^{k^{'}}+1}{2}$ where $k':=\ell(\overline{\l}_1) -\ell(\overline{\l}_0)$ with interchanging the role of $\overline{\l}_0$, $\overline{\l}_1$ by $\overline{\l}_1$, $\overline{\l}_0$ respectively. Then we have that the resulting partition $\phi_1(\overline{\l})\in P_{od}\left(n-k(k+1)\right)$. Thus, from the injectivity of $\phi$, it is clear that $\phi_1$ is well defined and consequently, it follows that $\phi_1$ is injective. Thus $\phi\circ\phi_1=\text{Id}_{D^{e}_2(n)}$ and $\phi_1\circ\phi=\text{Id}_{\underset{k\geq0}{\bigcup}P_{od}\left(n-k(k+1)\right)}$ imply that $\phi$ is bijective.\qed 

Let us illustrate the bijection with the following example. Let $n=16$, $k=1$ and take $\l=(6,4,3,1)\in P_{od}(14)$. Therefore, the corresponding $\l^{'}=(3,2)$ as $\l_{od}=(3,1)$. Now on the top of $\l^{'}$ we add one box, i.e., rotated Ferrers diagram of $\d(1)$ (since, $k=1$) and draw staircase profile. Then, we have $\l^{'}_1=(3^1,2^1)$, $\l^{'}_0=(1^0)$ so that the corresponding partition $\phi(\l)=(6^1,4^1,3,2^0,1) \in D^{e}_2(16)$ (after multiplication of each part of $\l^{'}_1$ and $\l^{'}_0$ by 2).
\begin{center}
	\begin{tikzpicture}[scale=0.5, line width=0.5pt]
	\draw (0,0) grid (1,1);
	\draw[line width=2pt] (0,1) -- ++(1,0);
	\draw (0,0) grid (3,-1);
	\draw[line width=2pt] (1,1) -- ++(0,-1);
	\draw[line width=2pt] (1,0) -- ++(1,0);
	\draw[line width=2pt] (2,0) -- ++(0,-1);
	\draw[line width=2pt] (2,-1) -- ++(1,0);
	\draw (0,0) grid (2,-2);
	\end{tikzpicture}
\end{center}

Conversely, for $\overline{\l}=(6^1,4^1,3,2^0,1) \in D^{e}_2(16)$, here, $k^{'}=\ell(\overline{\l}_1)-\ell(\overline{\l}_0)=1$ and therefore $k=1$ as $\ell(\overline{\l}_1)>\ell(\overline{\l}_0)$. Now the corresponding $\overline{\l}_0=(1^0)$ and $\overline{\l}_1=(3^1,2^1)$.  The diagrams of the conjugate of the shifted diagram of $\overline{\l}_0$ and $\overline{\l}_1$ are respectively as follows. 
\begin{center}
	\begin{tikzpicture}[scale=0.5, line width=0.5pt]
	\draw (0,0) grid (1,1);
	\draw (0,0) grid (2,-1);
	\draw (0,0) grid (2,-2);
	\end{tikzpicture}
	\hspace*{2cm}
	\begin{tikzpicture}[scale=0.5, line width=0.5pt]
	\draw (0,0) grid (1,1);
	\end{tikzpicture}
\end{center}
Since $k=1$, we remove first row from the the top of the conjugate of the shifted diagram of $\overline{\l}_1$  \begin{center}
	\begin{tikzpicture}[scale=0.5, line width=0.5pt]
	\draw (0,0) grid (2,1);
	\draw (0,0) grid (2,-1);
	\end{tikzpicture}
\end{center} and  join the remaining diagram and the shifted digram of $\overline{\l}_0$ so they align
at the top. 
\begin{center}
	\begin{tikzpicture}[scale=0.5, line width=0.5pt]
	\draw (0,0) grid (3,1);
	\draw (0,0) grid (2,-1);
	\end{tikzpicture}
\end{center}
After multiplication of each part of the resulting partition $(3,2)$ by $2$, we get $\phi^{-1}(\overline{\l})=(6,4,3,1)\in P_{od}(14)$. 

\emph{Proof of Theorem \ref{increasing}}: Let $\mathcal{P}_{od}(n)$ be as in \eqref{oddptn1}. For $\l\in \mathcal{P}_{od}(n)$, we split $\l$ as given in \eqref{oddptn2} and \eqref{oddptn3}. From \eqref{oddptn4}, we already know that $n=\sum_{\ell=1}^{r}\lambda_{o_\ell}+\sum_{k=1}^{s}\text{mult}(\lambda_{e_k})\cdot\lambda_{e_k}$.
First, we decompose $\mathcal{P}_{od}(n)$ into five subsets
\begin{equation}\label{decomp}
\mathcal{P}_{od}(n):=\bigcup\limits_{j=1}^{5}\mathcal{P}^{[j]}_{od}(n),
\end{equation}
where
\begingroup
\allowdisplaybreaks
\begin{align}\label{eqn3}
\mathcal{P}^{[1]}_{od}(n)&:=\left\{\l\in \mathcal{P}_{od}(n):\l_{o_j}-\l_{o_{j+1}}=2\ \text{for}\ 1\le j\le r-1,\ \l_{o_r}=1,\ \text{and}\ \l_{o_1}>\l_{e_1}\right\},\\\label{eqn4}
\mathcal{P}^{[2]}_{od}(n)&:=\left\{\l\in \mathcal{P}_{od}(n):\l_{o_j}-\l_{o_{j+1}}=2\ \text{for}\ 1\le j\le r-1, \l_{o_r}>1\right\},\\\label{eqn5}
\mathcal{P}^{[3]}_{od}(n)&:=\left\{\l\in \mathcal{P}_{od}(n):\l_{o_j}-\l_{o_{j+1}}=2\ \text{for}\ 1\le j\le r-1,\ \l_{o_r}=1,\ \text{and}\ \l_{o_1}<\l_{e_1}\right\},\\\label{eqn6}
\mathcal{P}^{[4]}_{od}(n)&:=\left\{\l\in \mathcal{P}_{od}(n):\l_{o_j}-\l_{o_{j+1}}>2\ \text{for some}\ 1\le j\le r-1,\ \text{and}\ \l_{o_1}>\l_{e_1}\right\},\\\label{eqn7}
\mathcal{P}^{[5]}_{od}(n)&:=\left\{\l\in \mathcal{P}_{od}(n):\l_{o_j}-\l_{o_{j+1}}>2\ \text{for some}\ 1\le j\le r-1,\ \text{and}\ \l_{o_1}<\l_{e_1}\right\}.
\end{align}
\endgroup
Thus
\begin{equation}\label{disjoint}
\mathcal{P}^{[k]}_{od}(n)\bigcap \mathcal{P}^{[\ell]}_{od}(n)=\emptyset\ \text{for}\ 1\le k\neq \ell\le 5.
\end{equation}
Define a map 
\begin{eqnarray}\label{map}
& &	f: \mathcal{P}_{od}(n) \longrightarrow \mathcal{P}_{od}(n+1) \nonumber \\ & & \hspace*{-0.5cm} \l=\l_{od}\cup\l_{er} \mapsto \begin{cases}
(\lambda_{o_1}+1, \lambda_{o_2}, \cdots \lambda_{o_r})\cup\l_{er}, \quad \text{if}\ \l\in  \mathcal{P}^{[k]}_{od}(n)\ \text{with}\ k\in\{1,2,4\},\\
\l_{od}\cup	(\lambda_{e_1}+1, \lambda_{e_2}, \cdots \lambda_{e_s}), \quad \text{if}\ \l\in  \mathcal{P}^{[k]}_{od}(n)\ \text{with}\ k\in\{3,5\}.
\end{cases}
\end{eqnarray}
By definition of $f$, it is clear that $f$ is an injective map and $f(\l)\in \mathcal{P}_{od}(n+1)$.  Next, we compute the $moex$ of $\lambda\in \mathcal{P}_{od}(n)$ and that of for $f(\lambda)\in \mathcal{P}_{od}(n+1)$ individually for $\{\mathcal{P}^{[j]}_{od}(n)\}_{1\le j\le 5}$ defined in \eqref{eqn3}--\eqref{eqn7}.  For $\l\in \mathcal{P}^{[2]}_{od}(n)$, by definition of $moex$ of a partition, \eqref{eqn4}, and \eqref{map}, we have
\begin{equation}\label{eqn8}
moex(\l)=1=moex(f(\l)).
\end{equation}
For $\l\in \mathcal{P}^{[3]}_{od}(n)$, we see that $moex(\l)=\l_{o_1}+2$ and by \eqref{eqn5} and \eqref{map}, we have
\begin{equation*}
moex(f(\l))=\begin{cases}
\l_{o_1}+2,\quad \text{if}\ \l_{e_1}+1>\l_{o_1}+2,\\
\l_{o_1}+4,\quad \text{if}\ \l_{e_1}+1=\l_{o_1}+2.
\end{cases}
\end{equation*}
Therefore, if $\l\in \mathcal{P}^{[3]}_{od}(n)$, it follows that
$moex(\l)\le moex(f(\l))$.
For example, $\lambda=(4,3,2,2,1)$ is a partition of $12$ with $moex(\lambda)=5$ and $moex(f(\lambda))=7$ as $\lambda_{e_1}+1=4+1=3+2=\lambda_{o_1}+2$. Also $\lambda=(8,5,3,2,2,1)$ is a partition of $21$ with $moex(\lambda)=7$ and $moex(f(\lambda))=7$ since $\lambda_{e_1}+1=8+1>5+2=\lambda_{o_1}+2$. For $\l\in \mathcal{P}^{[k]}_{od}(n)$ with $k\in \{4,5\}$, by definitions \eqref{eqn6} and \eqref{eqn7}, we set
$j_0:=\underset{1\le j\le r-1}{\min}\{\l_{o_j}-\l_{o_{j+1}}>2\},$
and write $\l_{od}$ as
$\l_{od}=(\l_{o_1},\l_{o_2},\cdots,\l_{o_{j_0}},\l_{o_{j_0+1}},\l_{o_{j_0+1}}-2,\cdots,3,1)$. 
Consequently, by \eqref{map}, it readily implies that
\begin{equation}\label{eqn10}
moex(\l)=\l_{o_{j_0+1}}+2=moex(f(\l)).
\end{equation}
For example, $\lambda=(5,4,1)\vdash 10$ with $moex(\lambda)=3$ and $moex(f(\lambda))=3$ with $j_0=1$. Also for $\lambda=(6,5,1)\vdash 12$ with $moex(\lambda)=3$ and $moex(f(\lambda))=3$ with $j_0=1$. Next, for $1\le j \le 5$, define
\begin{equation}\label{imageset}
f\left(\mathcal{P}^{[j]}_{od}(n)\right)=:\widetilde{\mathcal{P}}^{[j]}_{od}(n+1)\subset \mathcal{P}_{od}(n+1),
\end{equation}
and by \eqref{map}, $f|_{\mathcal{P}^{[j]}_{od}(n)}$ is bijective. Therefore,
\begingroup
\allowdisplaybreaks
\begin{align}\label{eqn11}
&\s_{od}\text{moex}(n)=\sum_{\l\in\mathcal{P}_{od}(n)}\!\!\!moex(\l)=\sum_{j=1}^{5}\!\sum_{\l\in\mathcal{P}^{[j]}_{od}(n)}\!\!moex(\l)\!=\!\!\!\!\sum_{\l\in\mathcal{P}^{[1]}_{od}(n)}moex(\l)+\sum_{j=2}^{5}\sum_{\l\in\mathcal{P}^{[j]}_{od}(n)}\!\!\!moex(\l)\nonumber\\
&\le \!\!\sum_{\l\in\mathcal{P}^{[1]}_{od}(n)}\!\!moex(\l)\!+\!\sum_{j=2}^{5}\!\sum_{\l\in\mathcal{P}^{[j]}_{od}(n)}\!\!\!\!moex(f(\l))=\!\!\!\sum_{\l\in\mathcal{P}^{[1]}_{od}(n)}\!\!\!\!moex(\l)\!+\!\sum_{j=2}^{5}\sum_{\widetilde{\l}\in\widetilde{\mathcal{P}}^{[j]}_{od}(n+1)}\!\!\!moex(\widetilde{\l}),
\end{align}
\endgroup
using \eqref{decomp} and \eqref{disjoint} in the second step, using \eqref{eqn8} and \eqref{eqn10} in the penultimate step, and using \eqref{imageset} and $f|_{\mathcal{P}^{[i]}_{od}(n)}$ is bijection in the final step. For $\l\in \mathcal{P}^{[1]}_{od}(n)$, by \eqref{eqn3} and \eqref{map}, we have
$moex(\l)=\l_{o_1}+2\ \text{and}\ moex(f(\l))=\l_{o_1}$. This consequently implies that $moex(\l)=moex(f(\l))+2$ and so
\begin{equation}\label{eqn12}
\sum_{\l\in\mathcal{P}^{[1]}_{od}(n)}moex(\l)=\sum_{\widetilde{\l}\in\widetilde{\mathcal{P}}^{[1]}_{od}(n+1)}moex(\widetilde{\l})+2\left|\mathcal{P}^{[1]}_{od}(n)\right|.
\end{equation}
For example, $\lambda=(5,4,3,2,2,1)\vdash 17$ with $moex(\lambda)=7$ and $moex(f(\lambda))=5$. From \eqref{eqn11} and \eqref{eqn12}, it yields that
\begin{equation}\label{eqn13}
\s_{od}\text{moex}(n)\le \sum_{i=1}^{5}\sum_{\widetilde{\l}\in\widetilde{\mathcal{P}}^{[i]}_{od}(n+1)}moex(\widetilde{\l})+2\left|\mathcal{P}^{[1]}_{od}(n)\right|.
\end{equation}
Now in order to prove the claimed inequality in the proposition, we now aim to find a non-empty set $Y:=Y(n+1)\subset \mathcal{P}_{od}(n+1)$ such that
\begin{equation}\label{eqn14}
Y\cap f\left(\mathcal{P}_{od}(n)\right)=\emptyset,\ \left|Y\right|\ge \left|\mathcal{P}^{[1]}_{od}(n)\right|,\ \text{and}\ moex(\mu)\ge 3\ \text{for}\ \mu\in Y.
\end{equation}
Because then we have
\begingroup
\allowdisplaybreaks
\begin{align*}
&\s_{od}moex(n+1)=\sum_{\l\in\mathcal{P}_{od}(n+1)}moex(\l)\ge \sum_{\l\in f\left(\mathcal{P}_{od}(n)\right)}moex(\l)+\sum_{\l\in Y}moex(\l)\\
&=\sum_{i=1}^{5}\sum_{\widetilde{\l}\in \mathcal{P}^{[i]}_{od}(n)}moex(\widetilde{\l})+\sum_{\l\in Y}moex(\l)\ge \sum_{i=1}^{5}\sum_{\widetilde{\l}\in\mathcal{P}^{[i]}_{od}(n)}moex(\widetilde{\l})+3|Y|\\
&\ge \s_{od}moex(n)-2\left|\mathcal{P}^{[1]}_{od}(n)\right|+3|Y|\ge \s_{od}moex(n)-2\left|Y\right|+3|Y|>\s_{od}moex(n),
\end{align*}
\endgroup
using $Y\cup f(\mathcal{P}_{od}(n))\subset\mathcal{P}_{od}(n+1)$ and \eqref{eqn14} in the second step, \eqref{eqn14} in the fourth step, \eqref{eqn13} in the fifth step, \eqref{eqn14} in the penultimate step, and using $Y\neq \emptyset$ in the final step. We now move on to construct such a set $Y\subset \mathcal{P}_{od}(n+1)$ depending on whether $n$ is a perfect square or not. First, note that if $n$ even and a perfect square, then $n=(2m)^2$ for some $m\in \mathbb{N}$. Note that for $\l\in \mathcal{P}^{[1]}_{od}(n)$, $\ell(\l_{od})\equiv 0\ (\text{mod}\ 2)$. For $m\in \mathbb{N}$, define
$\widehat{c}(n,m):=\#\left\{\l\vdash n: \l_j\equiv 0\ (\text{mod}\ 2)\ \text{and}\ \l_j<m\right\}$,
$a(n,m):=\frac nm-1=4m-1$, and $\overline{a}(n,m):=\frac{a(n,m)-1}{2}+1=2m$.
We have in total $\sum_{j=1}^{\frac{\overline{a}(n,m)}{2}}\#(\lambda_j(n,m))$ numbers of such partitions in $\mathcal{P}^{[1]}_{od}(n)$, which are as follows: $\lambda_1(n,m):=(a(n,m), a(n,m)-2,a(n,m)-4,\cdots,a(n,m)-(a(n,m)-1))$ and we see that exactly there is $\#(\lambda_1(n,m)):=1$ such partition. Set
$\lambda_2(n,m):=(a(n,m)-4,a(n,m)-6,\cdots,a(n,m)-(a(n,m)-1))\cup \lambda_{er}$ and thus, we obtain $\#(\lambda_2(n,m)):=\widehat{c}(a(n,m)+a(n,m)-2,a(n,m)-4)$ number of such partitions. Continuing in such a manner, we have
$\lambda_{\frac{\overline{a}(n,m)}{2}}(n,m):=\left(a(n,m)-(a(n,m)-3),a(n,m)-(a(n,m)-1)\right)\cup \lambda_{er}$ and exactly $\#(\lambda_{\frac{\overline{a}(n,m)}{2}}(n,m)):=\widehat{c}(a(n,m)+\cdots+a(n,m)-(a(n,m)-5),a(n,m)-(a(n,m)-3))$ number of such partitions. Each of the above partitions with $moex$ statistics are given by $a(n,m)+2$, $a(n,m)-2,\cdots,a(n,m)-(a(n,m)-3)+2$ along with their number of occurences respectively, is reduced by 2 in their image (under the map $f$), i.e., 
$$\sum_{\lambda \in \mathcal{P}^{[1]}_{od}(n)}moex(\l)=2\sum_{j=1}^{\frac{\overline{a}(n,m)}{2}}\#(\lambda_j(n,m))+\sum_{\lambda \in f(\mathcal{P}^{[1]}_{od}(n))}moex(\l).$$
For example, $16=(2\cdot2)^2$. Therefore, $a(16,2)=7$, which is the largest odd part can appear in the partitions of $\mathcal{P}^{[1]}_{od}(16)$ and $\overline{a}(16,2)=4$. Here we have all such partitions of $\mathcal{P}^{[1]}_{od}(16)$: $\lambda_1(16,2)=(7,5,3,1)$ and $\#(\lambda_1(16,2))=1$, $\lambda_2(16,2)=(3,1)\cup \lambda_{er}$ and $\#(\lambda_2(16,2))=\widehat{c}(12,3)$, where $\widehat{c}(12,3)$ counted by $\l_{er}$. Thus, 
$\sum_{\lambda \in \mathcal{P}^{[1]}_{od}(16)}moex(\l)=2\sum_{j=1}^{\frac{\overline{a}(16,2)}{2}}\!\#(\lambda_j(16,2))+\sum_{\lambda \in f(\mathcal{P}^{[1]}_{od}(16))}moex(\l)$. 
There always exist $\sum_{j=1}^{\frac{\overline{a}(n,m)}{2}}\#(\overline{\lambda}_j(n+1,m))$ number of partitions of $n+1$ in $\mathcal{P}_{od}(n+1)\setminus f(\mathcal{P}_{od}(n))$ each with $moex(\l)\ge 3$, which are as follows: $\overline{\lambda}_1(n+1,m):=(a(n,m)-1, a(n,m)-1,a(n,m)-5,a(n,m)-5,\cdots,a(n,m)-(a(n,m)-2),a(n,m)-(a(n,m)-2))\cup(1)$ and exactly $\#(\overline{\lambda}_1(n+1,m):=1$ such partition,
$\overline{\lambda}_2(n+1,m):=(a(n,m)-5,a(n,m)-5,\cdots,a(n,m)-(a(n,m)-2),a(n,m)-(a(n,m)-2))\cup(1)\cup\lambda_{er}$ and exactly $\widehat{c}(a(n,m)+(a(n,m)-2),a(n,m)-4)=:\#(\overline{\lambda}_2(n+1,m))$ number of such partitions, where $\lambda_{er}$ is enumerated by $\#(\overline{\lambda}_2(n+1,m))$. Continuing altogether, we finally get $\overline{\lambda}_{\frac{\overline{a}(n,m)}{2}}(n+1,m):=(a(n,m)-(a(n,m)-2),a(n,m)-(a(n,m)-2))\cup(1)\cup\lambda_{er}$ and exactly $\widehat{c}(a(n,m)+\cdots+a(n,m)-(a(n,m)-5),a(n,m)-(a(n,m)-3)):=\#(\overline{\lambda}_{\frac{\overline{a}(n,m)}{2}}(n+1,m))$ number of such partitions. It is immediate that $$\left|\mathcal{P}^{[1]}_{od}(n)\right|=\sum_{j=1}^{\frac{\overline{a}(n,m)}{2}}\#(\overline{\lambda}_j(n+1,m))=\sum_{j=1}^{\frac{\overline{a}(n,m)}{2}}\#(\lambda_j(n,m)):=|Y|.$$ 
For example, $16=(2\cdot2)^2$. Therefore, $a(16,2)=7$ and $\overline{a}(16,2)=4$. Here we have all such partitions of $\mathcal{P}_{od}(17)$, explicitly, $\lambda_1(17,2)=(6,6,2,2)\cup(1)$ and $\#(\lambda_1(17,2))=1$,\\
$\lambda_2(17,2)=(2,2)\cup(1)\cup \lambda_{er}$ and $\#(\lambda_2(17,2))=\widehat{c}(12,3)$ where $\lambda_{er}$ enumerated by $\widehat{c}(12,3)$.\\
It is clear that $\sum_{j=1}^{\frac{\overline{a}(16,2)}{2}}\#(\overline{\lambda}_j(17,2))=\sum_{j=1}^{\frac{\overline{a}(16,2)}{2}}\#(\lambda_j(16,2))$. Now, we consider $n$ is even but $n\neq (2m)^2$ for $m\in \mathbb{N}$. Set 
$n_1:=\underset{k<n}{\max}\left\{k: k=4\ell^2\ \text{for some} \ \ell \in \mathbb{N}\right\}$.
There are exactly $\sum_{j=1}^{\frac{\overline{a}(n_1,m_1)}{2}}\#(\lambda_j(n_1,m_1))$ numbers of such partitions of $n$, where $m_1$ is determined by $n_1=4m^2_1$.  For example, $12$ is even but not of the form $4m^2$ for $m \in \mathbb{N}$ and in this case, $n_1=4$.  When $n\neq 4m^2$, the same construction works for $n_1$ which is the maximum such even integer with $n_1<n$ and $n_1=4m^2$ for some $m \in \mathbb{N}$. Now, we consider $n$ is odd and $n=(2m+1)^2$ for some $m\in \mathbb{N}$. Define $b(n,m):=\frac{n+m}{m+1}=4m+1$ and $\overline{b}(n,m):=\frac{b(n,m)-1}{2}=2m$. In this case, we have in total $\sum_{j=1}^{\frac{\overline{b}(n,m)}{2}}\#(\lambda'_j(n,m))$ number of such partitions in $\mathcal{P}^{[1]}_{od}(n)$, explicitly written as: $\lambda'_1(n,m):=(b(n,m), b(n,m)-2,b(n,m)-4,\cdots,b(n,m)-(b(n,m)-1))$ and exactly $ \#(\lambda'_1(n,m)):=1$ such partition,
$\lambda'_2(n,m):=(b(n,m)-4,b(n,m)-6,\cdots,b(n,m)-(b(n,m)-1))\cup \lambda_{er}$ and exactly $\widehat{c}(b(n,m)+(b(n,m)-2),b(n,m)-4)=:\#(\lambda'_2(n,m))$ number of such partitions. Continuing in this fashion, our final set of partitions are $\lambda'_{\frac{\overline{b}(n,m)}{2}}(n,m):=(b(n,m)-(b(n,m)-5),b(n,m)-(b(n,m)-3),b(n,m)-(b(n,m)-1))\cup\lambda_{er}$ and obtain exactly $\widehat{c}(b(n,m)+\cdots+b(n,m)-(b(n,m)-7),b(n,m)-(b(n,m)-3)):=\#(\lambda'_{\frac{\overline{b}(n,m)}{2}}(n,m))$ number of such partitions.  Each of the above partitions with $moex$ is $b(n,m)+2$, $b(n,m)-2,\cdots,b(n,m)-(b(n,m)-5)+2$, counted with their number of occurences respectively, is reduced by $2$ in their image, i.e., $$\sum_{\lambda' \in \mathcal{P}^{[1]}_{od}(n)}moex(\l')=2\sum_{j=1}^{\frac{\overline{b}(n,m)}{2}}\#(\lambda'_j(n,m))+\sum_{\lambda' \in f(\mathcal{P}^{[1]}_{od}(n))}moex(\l').$$ For example, $25=(2\cdot2+1)^2$. Therefore, $b(25,2)=9$, which is the largest odd part that can appear in partitions belongs to $\mathcal{P}^{[1]}_{od}(25)$ and $\overline{b}(25,2)=4$. Here we have all such partitions belong to $\mathcal{P}^{[1]}_{od}(25)$ as follows:
$\lambda'_1(25,2)=(9,7,5,3,1)$ and $\#(\lambda'_1(25,2))=1$,
$\lambda'_2(25,2)=(5,3,1)\cup\lambda_{er}$ and $\#(\lambda'_2(25,2))=\widehat{c}(16,5)$, where $\lambda_{er}$ enumerated by $\widehat{c}(16,5)$.
Thus 
$\sum_{\lambda' \in \mathcal{P}^{[1]}_{od}(25)}moex(\l')=2\sum_{j=1}^{\frac{\overline{b}(25,2)}{2}}\#(\lambda'_j(25,2))+\sum_{\lambda' \in f(\mathcal{P}^{[1]}_{od}(25))}moex(\l')$. 
There always exist $\sum_{j=1}^{\frac{\overline{b}(n,m)}{2}}\#(\overline{\lambda}'_j(n+1,m))$ number of partitions of $n+1$ in $\mathcal{P}_{od}(n+1)\setminus f(\mathcal{P}_{od}(n))$ each with $moex\ge 3$, which are as follows: $\overline{\lambda}'_1(n+1,m):=(b(n,m)-1, b(n,m)-1,b(n,m)-5,b(n,m)-5,\cdots,b(n,m)-(b(n,m)-4),b(n,m)-(b(n,m)-2))\cup (b(n,m)-(b(n,m)-3),1)$ and exactly $\#(\overline{\lambda}'_1(n+1,m)):=1$ such partition, $\overline{\lambda}'_2(n+1,m):=(b(n,m)-5,b(n,m)-5,\cdots,b(n,m)-(b(n,m)-4),b(n,m)-(b(n,m)-2))\cup(b(n,m)-(b(n,m)-3),1)\cup\lambda'_{er}$ and exactly $\widehat{c}(b(n,m)+(b(n,m)-2),b(n,m)-4)=:\#(\overline{\lambda}'_2(n+1,m))$ number of such partitions, where $\lambda'_{er}$ corresponds to this $\text{mult}(\overline{\lambda}'_2(n+1,m))$ number of partitions. Finally, we have
$\overline{\lambda}'_{\frac{\overline{b}(n,m)}{2}}(n+1,m):=(b(n,m)-(b(n,m)-4),b(n,m)-(b(n,m)-2))\cup(b(n,m)-(b(n,m)-3),1)\cup\lambda'_{er}$ and exactly $\widehat{c}(b(n,m)+\cdots+b(n,m)-(b(n,m)-5),b(n,m)-(b(n,m)-3)):=\#(\overline{\lambda}'_{\frac{\overline{b}(n,m)}{2}}(n+1,m))$ number of such partitions. Therefore,
$$|\mathcal{P}^{[1]}_{od}(n)|=\sum_{j=1}^{\frac{\overline{b}(n,m)}{2}}\#(\overline{\lambda}'_j(n+1,m))=\sum_{j=1}^{\frac{\overline{b}(n,m)}{2}}\#(\lambda'_j(n,m))=:|Y|.$$ 
For example, $25=(2\cdot2+1)^2$. So, $b(25,2)=9$ and $\overline{b}(25,2)=4$. Here we have all such partitions of $\mathcal{P}_{od}(26)$: $\lambda'_1(26,2)=(8,8,4,2)\cup(3,1)$ and $\#(\lambda'_1(26,2))=1$,
$\lambda'_2(26,2)=(4,2)\cup(3,1)\cup\lambda'_{er}$ and $\#(\lambda'_2(26,2))=\widehat{c}(16,5)$ where $\lambda'_{er}$ enumerated by $\widehat{c}(16,5)$. Therefore, $\sum_{j=1}^{\frac{\overline{b}(25,2)}{2}}\#(\overline{\lambda}'_j(26,2))=\sum_{j=1}^{\frac{\overline{b}(25,2)}{2}}\#(\lambda'_j(25,2))$.
Finally for $n$ odd but not of the form $(2m+1)^2$ for some $m\in \mathbb{N}$, set 
$n_1':=\underset{k<n}{\max}\{k': k'=(2\ell'+1)^2\ \text{for some} \ \ell' \in \mathbb{N}\}$
and thus, there are exactly $\sum_{j=1}^{\frac{\overline{b}(n_1',m_1')}{2}}\#(\lambda'_j(n_1',m_1'))$ number of such partitions of $n$, where $n_1':=(2m_1'+1)^2$.  For example, $17$ is an odd integer which is not of the form $(2m+1)^2$ for $m \in \mathbb{N}$ and so $n_1'=9$.  When $n$ is not of the form $(2m+1)^2$, the analogous sketch works for $n_1'$ which is the maximum such odd integer with $n_1'<n$ and $n_1'=(2m'+1)^2$ for some $m' \in \mathbb{N}$. This concludes the proof.\qed

\emph{Proof of Theorem \ref{asymp}}: From Theorem \ref{thm_sigma}, we have  \begin{equation*}
\sum_{m=0}^{\infty}\sigma_{od}moex(n)q^n= \frac{(-q;q^2)_{\infty}}{(q^2;q^2)_{\infty}}\Bigl(1+\s^*(-q)\Bigr)=:A_1(q) B(q),
\end{equation*} where $A_1(q)$ be as in \eqref{A1} and $B(q)=1+\s^*(-q)$. Now using \eqref{cohensigma} with $q=e^{-t}$ and $t\to 0^{+}$, we get 
\begin{equation*}
\s^*(-e^{-t})=2\sum_{n=1}^{\infty}\frac{e^{-n^2 t}}{(-e^{-t};e^{-2t})_n}\to 2\sum_{n=1}^{\infty}\frac{1}{2^n}=2.
\end{equation*}
Thus $B(e^{-t})\sim 3$ as $t \rightarrow 0^{+}$ and so with \eqref{A1(q)asymp}, we get
$A_1(e^{-t})B(e^{-t})\sim 3\sqrt{\frac{t}{\pi}}e^{\frac{\pi^2}{8t}}$. 
 Note that by Theorem \ref{increasing}, we already know that $\s_{od}\text{moex}(n)$ is increasing. Thus, applying Proposition \ref{ingham} with $\alpha=\frac{3}{\sqrt{\pi}}$, $\beta=\frac 12$, and $C=\frac{\pi^2}{8}$, we have $\sigma_{od}\text{moex}(n) \sim \frac{3e^{\pi\sqrt{\frac{n}{2}}}}{4\sqrt{2}n}$. This concludes the proof of the theorem.\qed


\emph{Proof of Theorem \ref{thm_meex-4e}}: By definition of $p_{od,e}(n,j)$, we see that
\begin{equation*}
\sum_{n=0}^{\infty}\sum_{j=0}^{\infty}p_{od,e}(n,j)w^jq^n=(-q;q^2)_{\infty}\sum_{m=1}^{\infty}\prod_{t=1}^{m-1}\frac{q^{2t}}{1-q^{2t}}\prod_{t=m+1}^{\infty}\frac{1}{1-wq^{2t}},
\end{equation*}
where $(-q;q^2)_{\infty}$ enumerates the distinct odd parts, $\prod_{t=1}^{m-1}\frac{q^{2t}}{1-q^{2t}}$ counts (with multiplicity) the even parts $\le 2m-2$ which is the meex of the corresponding partition, and the factor $\prod_{t=m+1}^{\infty}\frac{1}{1-wq^{2t}}$ enumerates the even parts $> 2m$ and $w$ measures the length of these even parts. Thus, we have
\allowdisplaybreaks
\begingroup 
\begin{eqnarray}
&&\sum_{n=0}^{\infty}\sum_{j=0}^{\infty}p_{od,e}(n,j)w^jq^n 
= (-q;q^2)_{\infty}\sum_{m=1}^{\infty}\frac{q^{2+4+\cdots+(2m-2)}}{(q^2;q^2)_{m-1}(wq^{2m+2};q^2)_{\infty}}\nonumber \\ 
&&= (-q;q^2)_{\infty}\sum_{m=1}^{\infty}\frac{q^{2\binom{m}{2}}}{(q^2;q^2)_{m-1}(wq^{2m+2};q^2)_{\infty}}=\frac{(-q;q^2)_{\infty}}{(wq^4;q^2)_{\infty}}\sum_{m=1}^{\infty}\frac{q^{2\binom{m}{2}}(wq^4;q^2)_{m-1}}{(q^2;q^2)_{m-1}}\nonumber \\ 
&&= \frac{(-q;q^2)_{\infty}}{(wq^4;q^2)_{\infty}}\sum_{m=0}^{\infty}\frac{q^{2\binom{m+1}{2}}(wq^4;q^2)_{m}}{(q^2;q^2)_{m}}= \frac{(-q;q^2)_{\infty}}{(wq^4;q^2)_{\infty}} \lim_{A\rightarrow0}\frac{(-q^2/A;q^2)_{m}(wq^4;q^2)_{m}}{(0;q^2)_{m}(q^2;q^2)_{m}}A^m \nonumber \\ 
&&= \frac{(-q;q^2)_{\infty}}{(wq^4;q^2)_{\infty}}\lim_{A\rightarrow0} {}_{2}\phi_{1}\left(\begin{matrix}
-q^2/A,&wq^4\\&0
\end{matrix}\,;q^2,A\right) \nonumber\\ 
&&=  \frac{(-q;q^2)_{\infty}}{(wq^4;q^2)_{\infty}}\lim_{A\rightarrow0} \frac{(wq^4;q^2)_{\infty}(-q^2;q^2)_{\infty}}{(A;q^2)_{\infty}} {}_{2}\phi_{1}\left(\begin{matrix}
0,&A\\&-q^2
\end{matrix}\,;q^2,wq^4\right)\nonumber \\ 
&&=  (-q;q)_{\infty}{}_{2}\phi_{1}\left(\begin{matrix}
0,&0\\&-q^2
\end{matrix}\,;q^2,wq^4\right) = (-q;q)_{\infty}\sum_{m=0}^{\infty}\frac{w^mq^{4m}}{(-q^2;q^2)_m(q^2;q^2)_m}=(-q;q)_{\infty}\sum_{m=0}^{\infty}\frac{(wq^4)^m}{(q^4;q^4)_{m}} \nonumber \\ &&= \frac{(-q;q)_{\infty}}{(wq^4;q^4)_{\infty}}= \frac{1}{(q;q^2)_{\infty}(wq^4;q^4)_{\infty}}= \sum_{n=0}^{\infty}\sum_{j=0}^{\infty}p_{4e}(n,j)w^jq^n, \nonumber
\end{eqnarray}
\endgroup
using Lemma \ref{hypg_tran_formula} in the seventh step and Lemma \ref{Cauchy} in the penultimate step. This concludes the proof.\qed 

\section{Proof of Theorem \ref{thm_abarod} and Theorem \ref{thm_sigmabar}}\label{sec4}

\emph{Proof of Theorem \ref{thm_abarod}}: For $\l\in \mathcal{P}_{od}(n)$ with maximal excludant $2k-1$, we split $\l$ as given in \eqref{oddptn2} and \eqref{oddptn3}. We split $\l_{od}$ into two components $\l'_{od}$ and $\l''_{od}$, where the first component $\l'_{od}$ is a partition without repeated odd parts with parts $\le 2k-2$ and the second component $\l''_{od}$ is a partition without repeated odd parts with smallest part $2k$ and all even parts ($\ge 2k$) less than the largest part of $\l_{od}$ must appear (possibly with multiplicities). Taking the conjugation of the partition $\left( a(\l_{od}),\cdots, 2k+1,2k\right)$ where even parts (may) appear with multiplicities , we obtain a partition $\l$ with $\text{mult}(a(\l))=2k$ and all other parts of $\l$ are distinct. Thus, by \ref{def6} and Lemma \ref{Cauchy}, we have
\begingroup 
\allowdisplaybreaks
\begin{eqnarray}\label{eqn2}
&&\sum_{n=0}^{\infty}\overline{a}^{o}_{od}(n)q^n =  \Bigg[\sum_{k=1}^{\infty}\frac{z^{2k-1}(-q;q^2)_{k-1}}{(q^2;q^2)_{k-1}}\sum_{m=1}^{\infty}(-q,q)_{m-1}q^{2km} \Bigg]_{z=1}\nonumber\\
&&= \Biggl[\sum_{m=1}^{\infty}zq^{2m}(-q;q)_{m-1}\sum_{k=0}^{\infty}\frac{(-q;q^2)_k}{(q^2;q^2)_k}(z^2q^{2m})^k\Biggr]_{z=1}= \Biggl[\sum_{m=1}^{\infty}zq^{2m}(-q;q)_{m-1}\frac{(-z^2q^{2m+1};q^2)_{\infty}}{(z^2q^{2m};q^2)_{\infty}}\Biggr]_{z=1}\nonumber \\ &&= \sum_{m=1}^{\infty}q^{2m}(-q;q)_{m-1}\frac{(-q^{2m+1};q^2)_{\infty}}{(q^{2m};q^2)_{\infty}}= \frac{(-q;q^2)_{\infty}}{(q^2;q^2)_{\infty}}\sum_{m=1}^{\infty}q^{2m}(-q;q)_{m-1}\frac{(q^{2};q^2)_{m-1}}{(-q;q^2)_{m}} \nonumber \\ &&= \frac{(-q;q^2)_{\infty}}{(q^2;q^2)_{\infty}}\sum_{m=1}^{\infty}q^{2m}\frac{(q^{2};q^2)_{m-1}}{(q;q^2)_{m-1}(-q;q^2)_{m}}=\frac{q^2(-q;q^2)_{\infty}}{(1+q)(q^2;q^2)_{\infty}}\sum_{m=0}^{\infty}\frac{(q^{2};q^2)_{m}q^{2m}}{(q;q^2)_{m}(-q^3;q^2)_{m}}.
\end{eqnarray}
\endgroup 
Applying Theorem \ref{APT} with $q\mapsto q^2$, $a=-q^{-1}, b=q, B=q^2, A\to 0$, we see that
\begingroup
\allowdisplaybreaks
\begin{align*}
\sum_{m=0}^{\infty}\frac{(q^{2};q^2)_{m}q^{2m}}{(q;q^2)_{m}(-q^3;q^2)_{m}}&=\frac{q(q^2;q^2)_{\infty}}{(q;q^2)_{\infty}(-q^3;q^2)_{\infty}}\lim_{A\to 0}\sum_{m\ge 0}\frac{\left(\frac 1A;q^2\right)_m A^m(-q^4)^m}{(q^3;q^2)_{m+1}}\\
&\hspace{4 cm}+(1+q)\sum_{m\ge 0}\frac{(q;q^2)_{m+1}}{(q^3;q^2)_{m+1}}(-q)^m\\
&=\frac{q(q^2;q^2)_{\infty}}{(q;q^2)_{\infty}(-q^3;q^2)_{\infty}}\sum_{m\ge 0}\frac{q^{m^2+3m}}{(q^3;q^2)_{m+1}}+(1-q^2)\sum_{m\ge 0}\frac{(-q)^m}{1-q^{2m+3}}\\
&=\frac{q(q^2;q^2)_{\infty}}{(q;q^2)_{\infty}(-q^3;q^2)_{\infty}}\sum_{m\ge 0}\frac{q^{m^2+3m}}{(q^3;q^2)_{m+1}}+(1-q^2)\sum_{m\ge 0}(-q)^{m}\sum_{n\ge 0}q^{2mn+3n}\\
&=\frac{q(q^2;q^2)_{\infty}}{(q;q^2)_{\infty}(-q^3;q^2)_{\infty}}\sum_{m\ge 0}\frac{q^{m^2+3m}}{(q^3;q^2)_{m+1}}+(1-q^2)\sum_{n\ge 0}q^{3n}\sum_{m\ge 0}(-q^{2n+1})^{m}\\
&=\frac{q(q^2;q^2)_{\infty}}{(q;q^2)_{\infty}(-q^3;q^2)_{\infty}}\sum_{m\ge 0}\frac{q^{m^2+3m}}{(q^3;q^2)_{m+1}}+(1-q^2)\sum_{n\ge 0}\frac{q^{3n}}{1+q^{2n+1}}.
\end{align*}
\endgroup 
Combining this with \eqref{eqn2}, we obtain
\begin{align*}
\sum_{n=0}^{\infty}\overline{a}^{o}_{od}(n)q^n&=\frac{1}{(q;q^2)_{\infty}}\sum_{m\ge 0}\frac{q^{m^2+3m+3}}{(q^3;q^2)_{m+1}}+\frac{q^2(-q;q^2)_{\infty}}{(1+q)(q^4;q^2)_{\infty}}\sum_{m\ge 0}\frac{q^{3m}}{1+q^{2m+1}}\\
&=\frac{1}{(q;q^2)_{\infty}}\sum_{m\ge 1}\frac{q^{m^2+m+1}}{(q^3;q^2)_{m}}+\frac{q^2(-q;q^2)_{\infty}}{(1+q)(q^4;q^2)_{\infty}}\sum_{m\ge 0}\frac{q^{3m}}{1+q^{2m+1}}\\
&=\frac{q}{(q^3;q^2)_{\infty}}\sum_{m\ge 1}\frac{q^{m^2+m}}{(q;q^2)_{m+1}}+\frac{q^2(-q;q^2)_{\infty}}{(1+q)(q^4;q^2)_{\infty}}\sum_{m\ge 0}\frac{q^{3m}}{1+q^{2m+1}}\\
&=\frac{q}{(q^3;q^2)_{\infty}}\left(\sum_{m\ge 0}\frac{q^{m^2+m}}{(q;q^2)_{m+1}}-\frac{1}{1-q}\right)+\frac{q^2(-q;q^2)_{\infty}}{(1+q)(q^4;q^2)_{\infty}}\sum_{m\ge 0}\frac{q^{3m}}{1+q^{2m+1}}\\
&=\frac{q}{(q^3;q^2)_{\infty}}\left(\nu(-q)-\frac{1}{1-q}\right)+\frac{q^2(-q;q^2)_{\infty}}{(1+q)(q^4;q^2)_{\infty}}\sum_{m\ge 0}\frac{q^{3m}}{1+q^{2m+1}}\\
&=\frac{q}{(q;q)^2{\infty}}\left((1-q)\nu(-q)-1\right)+\frac{q^2(-q;q^2)_{\infty}}{(1+q)(q^4;q^2)_{\infty}}\sum_{m\ge 0}\frac{q^{3m}}{1+q^{2m+1}},
\end{align*}
which proves \eqref{thm_abarod1}. 

In a similar fashion, we have
\begingroup 
\allowdisplaybreaks
\begin{eqnarray}
\sum_{n=0}^{\infty}\overline{a}^{e}_{od}(n)q^n &=&   \sum_{m=1}^{\infty}(-q,q)_{m-1}q^{m}+\Bigg[\sum_{k=1}^{\infty}\frac{z^{2k}(-q;q^2)_{k}}{(q^2;q^2)_{k-1}}\sum_{m=1}^{\infty}(-q,q)_{m-1}q^{(2k+1)m} \Bigg]_{z=1}\nonumber\\
&=& \sum_{m=0}^{\infty}(-q,q)_{m}q^{m+1}+\Bigg[\sum_{k=1}^{\infty}\frac{z^{2k}(-q;q^2)_{k}}{(q^2;q^2)_{k-1}}\sum_{m=1}^{\infty}(-q,q)_{m-1}q^{(2k+1)m} \Bigg]_{z=1}\nonumber\\
&=&\sum_{m=0}^{\infty}(-q,q)_{m}q^{m+1}+\Biggl[\sum_{m=1}^{\infty}z^2q^{3m}(-q;q)_{m-1}\sum_{k=0}^{\infty}\frac{(-q;q^2)_k}{(q^2;q^2)_k}(z^2q^{2m})^k(1+q^{2k+1})\Biggr]_{z=1}\nonumber\\
&=&\sum_{m=0}^{\infty}(-q,q)_{m}q^{m+1}+\frac{(-q;q^2)_{\infty}}{(q^2;q^2)_{\infty}}\sum_{m=1}^{\infty}\frac{(-q;q)_{m-1}(q^{2};q^2)_{m-1}}{(-q;q^2)_{m}}q^{3m}\nonumber\\
&&\hspace{4 cm}+\frac{(-q;q^2)_{\infty}}{(q^2;q^2)_{\infty}}\sum_{m=1}^{\infty}\frac{(-q;q)_{m-1}(q^{2};q^2)_{m}}{(-q;q^2)_{m+1}}q^{3m+1},\nonumber 
\end{eqnarray}
\endgroup
which is \eqref{thm_abarod2}. This conclude the proof of the theorem.\qed 
 
\emph{Proof of Theorem \ref{thm_sigmabar}}:  For $\l\in \mathcal{P}_{od}(n)$ with $moex(\l)=2k-1$, we split $\l$ as given in \eqref{oddptn2} and \eqref{oddptn3}. We further decompose its odd component $\lambda_{od}$ into two components $\lambda_{od}'$ and $\lambda_{od}''$. The first component $\lambda_{od}'$ is a partition with only distinct odd parts with parts $\le 2k-3$ and the second component $\lambda_{od}''$ is a partition with only distinct odd parts with smallest part $2k+1$ and from smallest part $2k+1$ and other parts (up to the largest part of $\lambda_{od}$, i.e; $a(\lambda_{od})$) must appear. Now if we take the conjugation of $\lambda_{od}''$, it gives us a partition $\mu$ with $\text{mult}(a(\mu))=2k+1$ and multiplicity of remaining parts is $2$. Therefore, by \ref{def7},
\begin{eqnarray}
\sum_{n=0}^{\infty}\sigma_{od}\text{moax}(n)q^n&=&\frac{d}{dz}\Biggl[\frac{1}{(q^2;q^2)_{\infty}}\sum_{k=1}^{\infty}z^{2k-1}(-q;q^{2})_{k-1}\sum_{m=1}^{\infty}q^{2\cdot(1+2+\cdots(m-1))+(2k+1)m}\Biggr]_{z=1} \nonumber \\ &=& \frac{1}{(q^2;q^2)_{\infty}}\sum_{k=1}^{\infty}(2k-1)(-q;q^{2})_{k-1}\sum_{m=1}^{\infty}q^{m(m+2k)},\nonumber
\end{eqnarray} which proves (\ref{thm_sigmabar1}).

Next,  for $\l\in \mathcal{P}_{od}(n)$ with $meex(\l)=2k$, we split $\l$ as given in \eqref{oddptn2} and \eqref{oddptn3}. We can divide $\lambda_{er}$ into two components $\lambda_{er}'$ and $\lambda_{er}''$. The first component $\lambda_{er}'$ is a partition with even parts $\le 2k$ and the second component $\lambda_{er}''$ is a partition with only even parts and the smallest part $2k+2$ and from the smallest part $2k+2$ up to the largest part of $\lambda_{er}$, i.e., $a(\lambda_{er})$, in between all parts must appear. Now if we take the conjugation of $\lambda_{er}''$, it gives us a partition $\mu$ with  $\text{mult}(a(\mu))=2k+2$ and all parts from $1$ up to $a(\mu)-1$ if appear then exactly twice. Therefore, by \ref{def7},
\begingroup
\allowdisplaybreaks
\begin{align}\label{sigmamaexbasic}
&\sum_{n=0}^{\infty}\sigma_{od}\text{meax}(n)q^n=\frac{d}{dz}\Biggl[(-q;q^2)_{\infty}\sum_{k=1}^{\infty}\frac{z^{2k}}{(q^2;q^2)_{k-1}}\sum_{m=1}^{\infty}(-q^2;q^2)_{m-1}q^{(2k+2)m}\Biggr]_{z=1}\\
&= \frac{d}{dz}\Biggl[(-q;q^2)_{\infty}\sum_{m=1}^{\infty}z^2q^{4m}(-q^2;q^2)_{m-1}\sum_{k=0}^{\infty}\frac{(z^2q^{2m})^k}{(q^2;q^2)_k} \Biggr]_{z=1}\nonumber\\
&=
\frac{d}{dz}\Biggl[(-q;q^2)_{\infty}\!\sum_{m=1}^{\infty}\frac{z^2q^{4m}(-q^2;q^2)_{m-1}}{(z^2q^{2m};q^2)_{\infty}} \Biggr]_{z=1}\!\!\!\!\!\!\!\!=\!(-q;q^2)_{\infty}\sum_{m=1}^{\infty}q^{4m}(-q^2;q^2)_{m-1}\frac{d}{dz}\left(\frac{z^2}{(z^2q^{2m};q^2)_{\infty}}\right)_{z=1}.\nonumber
\end{align} 
\endgroup
Note that
\begingroup
\allowdisplaybreaks 
\begin{eqnarray}
&&\frac{d}{dz}\Biggl(\frac{z^2}{(z^2q^{2m};q^2)_{\infty}}\Biggr)_{z=1}= \Biggl[\frac{z^2}{(z^2q^{2m};q^2)_{\infty}} \frac{d}{dz} \log \Biggl(\frac{z^2}{(z^2q^{2m};q^2)_{\infty}}\Biggr)\Biggr]_{z=1}\nonumber \\ &&= \frac{1}{(q^{2m};q^2)_{\infty}}\frac{d}{dz} \Biggl[2\log z-\sum_{n=1}^{\infty}\log \Bigl(1-z^2q^{2n+2m}\Bigr)\Biggr]_{z=1}=\frac{2}{(q^{2m};q^2)_{\infty}}\Biggl[1+\sum_{n=1}^{\infty}\frac{q^{2n+2m}}{1-q^{2n+2m}}\Biggr]. \nonumber
\end{eqnarray} 
\endgroup

Applying this to \eqref{sigmamaexbasic}, it yields 
\begingroup
\allowdisplaybreaks
\begin{eqnarray}
\sum_{n=0}^{\infty}\sigma_{od}meax(n)q^n&=& 2(-q;q^2)_{\infty}\sum_{m=1}^{\infty}\frac{q^{4m}(-q^2;q^2)_{m-1}}{(q^{2m};q^2)_{\infty}}\Biggl[1+\sum_{n=1}^{\infty}\frac{q^{2n+2m}}{1-q^{2n+2m}}\Biggr]\nonumber \\ &=& 2\frac{(-q;q^2)_{\infty}}{(q^2;q^2)_{\infty}}\sum_{m=1}^{\infty}q^{4m}(-q^2;q^2)_{m-1}(q^2;q^2)_{m-1}\Biggl[1+\sum_{n=1}^{\infty}\frac{q^{2n+2m}}{1-q^{2n+2m}}\Biggr]\nonumber \\ &=& 2\frac{(-q;q^2)_{\infty}}{(q^2;q^2)_{\infty}}\sum_{m=1}^{\infty}q^{4m}(q^4;q^4)_{m-1}\Biggl[1+\sum_{n=m}^{\infty}\frac{q^{2(n+1)}}{1-q^{2(n+1)}}\Biggr]\nonumber \\ &=& 2\frac{(-q;q^2)_{\infty}}{(q^2;q^2)_{\infty}} \Biggl(1-(q^4;q^4)_{\infty}+\sum_{n=1}^{\infty}\frac{q^{2(n+1)}}{1-q^{2(n+1)}}\Bigl(1-(q^4;q^4)_n\Bigr)\Biggr), \nonumber
\end{eqnarray} 
\endgroup 
using the identity (can be proved by induction on $n$) $\sum_{m=1}^{n}q^{4m}(q^4;q^4)_{m-1}= 1- (q^4;q^4)_{n}$ in the fourth step. This proves \eqref{thm_sigmabar2} and concludes the proof.\qed

\section{Proof of Theorem \ref{evthm1} and Theorem \ref{evthm2}}\label{sec5}

In this section, we first prove Theorem \ref{evthm1} and then Theorem \ref{evthm2}.

\emph{Proof of Theorem \ref{evthm1}}: By \eqref{evneq1}, we get
\begin{align*}
&\sum_{n=0}^{\infty}a^{o}_{ed}(n)q^n=\sum_{n=0}^{\infty}\frac{q^{1+2+3+\dots+(2n-1)+2n}\displaystyle \prod_{m=n+1}^{\infty}\left(1+q^{2m}\right)\left(1-q^{2n+1}\right)}{\left(q;q^2\right)_{\infty}}\\
&=\frac{\left(-q^2;q^2\right)_{\infty}}{\left(q;q^2\right)_{\infty}}\sum_{n=0}^{\infty}\frac{q^{\binom{2n+1}{2}}\left(1-q^{2n+1}\right)}{\left(-q^2;q^2\right)_n}=\frac{\left(-q^2;q^2\right)_{\infty}}{\left(q;q^2\right)_{\infty}}\left(\sum_{n=0}^{\infty}\frac{q^{2n^2+n}}{\left(-q^2;q^2\right)_n}-\sum_{n=0}^{\infty}\frac{q^{(2n+1)(n+1)}}{\left(-q^2;q^2\right)_n}\right),
\end{align*}
which proves the first identity of Theorem \ref{evthm1}. Next, by \eqref{evneq1}, we obtain
\begin{align*}
\sum_{n=0}^{\infty}a^{e}_{ed}(n)q^n&=\sum_{n=0}^{\infty}\frac{q^{1+2+3+\dots+(2n-1)}\displaystyle \prod_{m=n+1}^{\infty}\left(1+q^{2m}\right)}{\left(q;q^2\right)_{\infty}}=\frac{\left(-q^2;q^2\right)_{\infty}}{\left(q;q^2\right)_{\infty}}\sum_{n=0}^{\infty}\frac{q^{2n^2-n}}{\left(-q^2;q^2\right)_n}.
\end{align*}
This concludes the proof.\qed 

\emph{Proof of Theorem \ref{evthm2}}: Using \eqref{eveqn2}, we have
\begin{align*}
&\sum_{n=0}^{\infty}\s_{ed}\text{moex}(n)q^n=\sum_{n=0}^{\infty}\frac{(2n+1)q^{1+3+5+\dots+(2n-1)}\left(-q^2;q^2\right)_{\infty}\left(1-q^{2n+1}\right) }{\left(q;q^2\right)_{\infty}}\\
&=\frac{\left(-q^2;q^2\right)_{\infty}}{\left(q;q^2\right)_{\infty}}\sum_{n=0}^{\infty}(2n+1)q^{n^2}\left(1-q^{2n+1}\right)=\frac{\left(-q^2;q^2\right)_{\infty}}{\left(q;q^2\right)_{\infty}}\left(\sum_{n=0}^{\infty}(2n+1)q^{n^2}-\sum_{n=1}^{\infty}(2n-1)q^{n^2}\right)\\
&=\frac{\left(-q^2;q^2\right)_{\infty}}{\left(q;q^2\right)_{\infty}}\left(1+2\sum_{n=0}^{\infty}q^{(n+1)^2}\right)=\frac{\left(-q^2;q^2\right)_{\infty}}{\left(q;q^2\right)_{\infty}}\left(2\sum_{n=1}^{\infty}q^{n^2}+1\right)=\frac{\left(-q^2;q^2\right)_{\infty}}{\left(q;q^2\right)_{\infty}}\sum_{n\in \mathbb{Z}}q^{n^2}\\
&=\frac{\left(-q^2;q^2\right)_{\infty}\left(q^2;q^2\right)_{\infty}\left(-q;q^2\right)^2_{\infty}}{\left(q;q^2\right)_{\infty}},
\end{align*}
using \eqref{Gauss1} in the last step and simplifying further, we finish the proof of the first identity of Theorem \ref{evthm2}. 

For the remaining identity, using \eqref{eveqn2}, we have
\begin{align*}
&\sum_{n=0}^{\infty}\s_{ed}\text{meex}(n)q^n=\sum_{n=0}^{\infty}\frac{2n\ q^{2+4+6+\dots+(2n-2)}\left(-q^{2n+2};q^2\right)_{\infty}}{\left(q;q^2\right)_{\infty}}=\frac{\left(-q^2;q^2\right)_{\infty}}{\left(q;q^2\right)_{\infty}}\sum_{n=0}^{\infty}\frac{2n\ q^{2\binom{n}{2}}}{\left(-q^2;q^2\right)_n}\\
&=\frac{\left(-q^2;q^2\right)_{\infty}}{\left(q;q^2\right)_{\infty}}\sum_{n=1}^{\infty}\frac{2n\ q^{2\binom{n}{2}}}{\left(-q^2;q^2\right)_n}=\frac{\left(-q^2;q^2\right)_{\infty}}{\left(q;q^2\right)_{\infty}}\sum_{n=1}^{\infty}\frac{2n\ q^{2\binom{n}{2}}}{\left(-q^2;q^2\right)_{n-1}}\left(1-\frac{q^{2n}}{1+q^{2n}}\right)\\
&=\frac{\left(-q^2;q^2\right)_{\infty}}{\left(q;q^2\right)_{\infty}}\left(\sum_{n=1}^{\infty}\frac{2n\ q^{n^2-n}}{\left(-q^2;q^2\right)_{n-1}}-\sum_{n=1}^{\infty}\frac{2n\ q^{n^2+n}}{\left(-q^2;q^2\right)_{n}}\right)\\
&=\frac{\left(-q^2;q^2\right)_{\infty}}{\left(q;q^2\right)_{\infty}}\left(\sum_{n=0}^{\infty}\frac{\left(2n+2\right)\ q^{n^2+n}}{\left(-q^2;q^2\right)_{n}}-\sum_{n=1}^{\infty}\frac{2n\ q^{n^2+n}}{\left(-q^2;q^2\right)_{n}}\right)=\frac{2\left(-q^2;q^2\right)_{\infty}}{\left(q;q^2\right)_{\infty}}\sum_{n=0}^{\infty}\frac{q^{n^2+n}}{\left(-q^2;q^2\right)_{n}}\\
&=\frac{2\left(-q^2;q^2\right)_{\infty}}{\left(q;q^2\right)_{\infty}}\s\left(q^2\right),
\end{align*} 
using \eqref{sigma} in the last step and we conclude the proof.\qed

\section{Concluding remarks}\label{con}
We conclude this paper by proposing the following problems with a brief description. We being with Theorem \ref{thm_aod}. We observe that the generating function involves $\frac{\left(-q;q^2\right)_{\infty}}{\left(q^2;q^2\right)_{\infty}}$ which is a modular form but we have failed to address the modularity of the remaining $q$-series. This leads to the following problem.
\begin{problem}\label{prob1}
What about the modularity of the $q$ hypergeometric series $\sum_{n\ge 0}\frac{q^{2n^2+n}}{\left(-q;q^2\right)_{n+1}}$?	
\end{problem}
Recently, asymptotics of Fourier coefficients of $v_1(q)$, a similar one as that of $v_2(q)$, studied by Folsom, Males, Rolen, and Storzer \cite{FMRS}. In Theorem \ref{asymp2}, we only obtained asymptotic main terms and thus propose the following.
\begin{problem}\label{prob2}
Is it possible to derive a Rademacher-type exact formula of $a^{o}_{od}(n)$ and $a^{e}_{od}(n)$?
\end{problem}
Next problem is to find a $q$-series proof for monotonicity of $a^{o}_{od}(n)$, $a^{e}_{od}(n)$, and $\s_{od}moex(n)$ because our proofs (see proof of Theorem \ref{increasing2} and Theorem \ref{increasing}) are combinatorial. For a power series $A(q)$, $A(q)\succeq 0$ denotes that $A(q)$ has non-negative coefficients.
\begin{problem}\label{prob3}
Show that
$$\left(1-q\right)\sum_{n=0}^{\infty}a^{o}_{od}(n)q^n\succeq 0,\  \left(1-q\right)\sum_{n=0}^{\infty}a^{e}_{od}(n)q^n\succeq 0,\ \text{and}\ \left(1-q\right)\sum_{n=0}^{\infty}\s_{od}\textnormal{moex}(n)q^n\succeq 0.$$
\end{problem}
Next, we note that the proof of Theorem \ref{thm_meex-4e} is based on $q$-series transformations. Thus we ask the following:
\begin{problem}\label{prob4}
Is there a combinatorial proof of Theorem \ref{thm_meex-4e}?	
\end{problem}
Regarding Theorems \ref{thm_abarod} and \ref{thm_sigmabar}, we raise the following question.
\begin{problem}\label{quest3}
Is there any simpler representation for the right-hand side of \eqref{thm_abarod2}, \eqref{thm_sigmabar1}, and \eqref{thm_sigmabar2}? 
	\end{problem}
Due to Zagier \cite[Lemma, page 5]{Zagier}, we know that $\sigma(q)=-\sigma^*(q^{-1})$ for every roots of unity. In this spirit with regards to Theorem \ref{thm_sigma} and Theorem \ref{evthm2}, we ask the following:
\begin{problem}\label{quest4}
 Are the generating functions of $\{\sigma_{od}\textnormal{moex}(n)\}_{n\ge 0}$ and $\{\sigma_{ed}\textnormal{meex}(n)\}_{n\ge 0}$ related?
\end{problem}
As a final remark, we would like to point out that it might be worthwhile to investigate maximal excludant for partitions without repeated even parts.

	\begin{center}
	\textbf{Acknowledgements}
\end{center}
The author sincerely thanks her mentor Prof. Brundaban Sahu for his valuable advice and suggestions. The author would also like to thank Dr. Bibekananda Maji and Dr. Pramod Eyyunni for fruitful discussions. This work is financially supported by the Council of Scientific and Industrial Research (CSIR File No: 09/1002(25470)/2025-EMR-I). Finally, the author thanks National Institute of Science Education and Research (NISER), Bhubaneswar for its hospitality and support. 

\begin{center}
	\textbf{Data Availability}
\end{center}
We hereby confirm that Data sharing not applicable to this article as no datasets were generated or analyzed during the current study.

\begin{center}
	\textbf{Ethics Declaration}
\end{center}
The author confirms that they have no conflict of interest in connection with this paper.

\end{document}